\documentclass[12pt,leqno,amsfonts,amscd]{amsart}
\setlength{\textwidth}{14cm}

\usepackage{epsfig}
\usepackage{amsmath}
\usepackage{amsfonts}
\usepackage{amssymb}
\usepackage{color}
\usepackage{hyperref}

\newtheorem{theorem}{Theorem}[section]
\newtheorem{lemma}[theorem]{Lemma}
\newtheorem{prop}[theorem]{Proposition}
\newtheorem{cor}[theorem]{Corollary}

\theoremstyle{definition}
\newtheorem{definition}[theorem]{Definition}

\theoremstyle{remark}

\newtheorem{remark}[theorem]{Remark}

\numberwithin{equation}{section}

\newcommand{\RR}{{\mathbb R}}

\newcommand{\eps}{\varepsilon}
\newcommand{\out}[1]{\ }

\DeclareMathOperator{\mflim}{mf{-}lim}

\DeclareMathOperator{\mfliminf}{mf{-}lim\,inf}


\let\cal=\mathcal
\renewcommand{\phi}{\varphi}

\hyphenation{pluri-polar} \hyphenation{pluri-sub-harmonic}

\begin{document}
\title[Sweeping at the Martin boundary]{Sweeping
at the Martin boundary of a fine domain}

\author{Mohamed El Kadiri}
\address{Universit\'e Mohammed V
\\D\'epartement de Math\'ematiques
\\Facult\'e des Sciences
\\B.P. 1014, Rabat
\\Morocco}
\email{elkadiri@fsr.ac.ma}

\author{Bent Fuglede}
\address{Department of Mathematical Sciences
\\Universitetsparken 5
\\2100 Copenhagen
\\Danmark}
 \email{fuglede@math.ku.dk}

\begin{abstract}
We study sweeping on a subset of the Riesz-Martin space of a fine domain in
$\RR^n$ ($n\ge2$), both with respect to the natural topology and the
minimal-fine topology, and show that the two notions of sweeping are
identical.
\end{abstract}

\maketitle

\section{Introduction}\label{sec1}

The fine topology on an open set $\Omega\subset\RR^n$ was introduced
by H.\ Cartan in classical potential theory. It is defined as the
smallest topology on $\Omega$ in which every super\-harmonic function on
$\Omega$ is continuous.
Potential theory on a finely open set, for example in $\RR^n$, was introduced
and studied in the 1970's by the second named author
\cite{F1}. The harmonic and super\-harmonic functions and the potentials in this
theory are termed finely [super]harmonic functions and fine
potentials. Generally one distinguishes by the prefix `fine(ly)'
notions in fine potential theory from those in classical potential
theory on a usual (Euclidean) open set. Large parts of classical
potential theory have been extended to fine potential theory.

The integral representation of positive finely super\-harmonic (=nonnegative)
functions by using Choquet's method of extreme points was studied by the first
named author in \cite{El1}, where it was shown that the cone of positive
super\-harmonic functions equipped with the natural topology has a compact base.
This allowed the present authors
in \cite{EF1} to define the Martin compactification and the Martin boundary of
a fine domain $U$ in $\RR^n$. The Martin compactification $\overline U$ of $U$
was defined by injection of $U$ in a compact base of the cone $\cal S(U)$ of positive
finely super\-harmonic functions on $U$. While the Martin boundary of a usual domain is closed and hence compact, all
we can say in the present setup is that the Martin boundary $\Delta(U)$ of $U$ is a
$G_\delta$ subset of the compact Riesz-Martin space
$\overline U=U\cup\Delta(U)$ endowed with the natural topology. Nevertheless
we can define a suitably measurable Riesz-Martin kernel
$K:U\times\overline U\longrightarrow[0,+\infty]$.
Every function $u\in\cal S(U)$ has an integral representation
$u(x)=\int_{\overline U}K(x,Y)d\mu(Y)$ in terms of a Radon measure $\mu$ on
$\overline U$. This representation is unique if it is required that $\mu$ be
carried by $U\cup\Delta_1(U)$, where $\Delta_1(U)$ denotes the minimal Martin
boundary of $U$, which likewise is a $G_\delta$ in $\overline U$. In this case
of uniqueness we write $\mu=\mu_u$. We show that $u$ is a
fine potential, resp.\ an invariant function, if and only if $\mu_u$ is
carried by $U$, resp.\ by $\Delta(U)$. The invariant functions, likewise
studied in \cite{EF1}, generalize the positive harmonic functions in the
classical Riesz decomposition theorem. Finite valued invariant functions are
the same as positive finely harmonic functions.

There is a notion of minimal thinness of a set $E\subset U$ at a point
$Y\in\Delta_1(U)$, and an associated minimal-fine filter $\cal F(Y)$,
which allowed the authors
in \cite{EF1} to obtain a generalization of the classical Fatou-Na{\"\i}m-Doob
theorem. We showed that,
for any finely super\-harmonic function $u\ge0$ on $U$
and for $\mu_1$-almost every point $Y\in\Delta_1(U)$,
$u(x)$ has the limit $(d\mu_u/d\mu_1)(Y)$ as $x\to Y$ along
the minimal-fine filter $\cal F(Y)$. Here $d\mu_u/d\mu_1$ denotes the
Radon-Nikod{\'y}m derivative of the absolutely continuous component of
$\mu_u$ with respect to the absolutely continuous component of the measure
$\mu_1$ representing the constant function 1, which is finely harmonic and
hence invariant.

In the present continuation of \cite{EF1} we study sweeping on a subset of the
Riesz-Martin space, and the Dirichlet problem at the Martin boundary of $U$.
An important integral representation of swept functions
(Theorem \ref{thm5}) seems to be new even in the case where $U$ is a
Euclidean domain. Furthermore we define the notion of minimal thinness
of a subset of $U$ at a point of $\Delta_1(U)$, and the associated
minimal-fine topology on $\overline U$. This mf-topology
is finer than the natural topology on $\overline U$, and
induces on $U$ the fine topology there.

In a further continuation \cite{EF3} of \cite{EF1} we adapt the PWB method
to the study of the Dirichlet problem at the Martin boundary of the fine
domain $U$.

{\bf Notations}: For a Green domain $\Omega$ in $\RR^n$, $n\ge2$, we denote
by $G_\Omega$ the
Green kernel for $\Omega$.  If $U$ is a fine domain in $\Omega$ we
denote by ${\cal S}(U)$ the convex cone of positive finely super\-harmonic
functions on $U$ in the sense of \cite{F1}. The convex cone of fine potentials
on $U$ (that is, the functions in ${\cal S}(U)$ for which every finely
subharmonic minorant is $\le 0$) is denoted by ${\cal P}(U)$. The cone of
invariant functions on $U$ is denoted by ${\cal H_i}(U)$; it is the
orthogonal band to ${\cal P}(U)$ relative to ${\cal S}(U)$. By $G_U$
we denote the (fine) Green kernel for $U$, cf.\ \cite{F2}, \cite{F4}. If
$A\subset U$ and $f:A\longrightarrow[0,+\infty]$ one denotes by $R{}_f^A$,
resp.\ ${\widehat R}{}_f^A$, the reduced function, resp.\ the swept function,
of $f$ on $A$ relative to $U$, cf.\ \cite[Section 11]{F1}. If $u\in\cal S(U)$
and $A\subset\overline U$ we may write ${\widehat R}{}_u^A$ for
${\widehat R}{}_f$ with $f:=1_Au$.  For any set $A\subset\Omega$ we denote
by $\widetilde A$ the fine closure of $A$ in $\Omega$, and by $b(A)$ the base
of $A$ in $\Omega$,
that is, the set of points of $\Omega$ at which $A$ is not thin,
in other words the set of all fine limit points of $A$ in $\Omega$.

\section{Sweeping on subsets of $\overline U$}\label{sec2}

We shall need an ad hoc concept of a (fine) Perron family. Recall from
\cite[Section 3]{EF1} the continuous affine form $\Phi\ge0$ on $\cal S(U)$
such that the chosen compact base $B$ of the cone $\cal S(U)$ consists of
all $u\in\cal S(U)$ with $\Phi(u)=1$. Cover $\Omega$
by a sequence of Euclidean open balls $B_k$ with closures
$\overline B_k$ contained in $\Omega$. We refer to \cite[Lemma 3.14]{EF1} for
the proof of the following lemma:

\begin{lemma}\label{lemma4.2}
{\rm{(a)}} The mapping $U\ni y\longmapsto G_U(.,y)\in\cal S(U)$
is continuous from $U$ with the fine topology into $\cal S(U)$ with the
natural topology.

{\rm{(b)}} The function $U\ni y\longmapsto\Phi(G_U(.,y))\in \,]0,+\infty[$ is
finely continuous on $U$.

{\rm{(c)}} The sets
$$V_k=\{y\in U:\Phi(G_U(.,y))>1/k\}\cap B_k$$
form a countable cover of \,$U$ by finely open sets which are
relatively naturally compact in $U$.
\end{lemma}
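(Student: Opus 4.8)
For \textbf{(a)} the plan is to use the description of the natural topology on $\mathcal{S}(U)$ (from \cite{El1}, see also \cite{EF1}) as the initial topology with respect to the maps $u\longmapsto\int_U u\,d\nu$, where $\nu$ ranges over a suitable (countable) family of measures on $U$ --- concretely those whose fine Green potential $G_U\nu$ is finite. By the universal property of an initial topology, fine continuity of $y\longmapsto G_U(\cdot,y)$ from $U$ into $\mathcal{S}(U)$ amounts to fine continuity of each scalar function $y\longmapsto\int_U G_U(x,y)\,d\nu(x)$ on $U$. By symmetry of the fine Green kernel and Tonelli's theorem this function is $(G_U\nu)(y)$, a fine potential on $U$, hence --- like every finely super\-harmonic function --- finely continuous with values in $[0,+\infty]$. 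This settles (a).

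For \textbf{(b)} I would first note that $\Phi$ is finite-valued: since $B=\Phi^{-1}(1)$ is a base of the cone $\mathcal{S}(U)$, every nonzero $u$ is uniquely $\lambda b$ with $b\in B$ and $\lambda=\Phi(u)\in\,]0,+\infty[$, while $\Phi(0)=0$; so $\Phi$ is a finite, positive, continuous affine form vanishing only at $0$. Composing it with the map of (a) makes $y\longmapsto\Phi(G_U(\cdot,y))$ finely continuous on $U$. For $y\in U$ the function $G_U(\cdot,y)$ is a nonzero element of $\mathcal{S}(U)$ (it has a pole at $y$), so $\Phi(G_U(\cdot,y))\in\,]0,+\infty[$, giving (b).

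For \textbf{(c)}: $\{y\in U:\Phi(G_U(\cdot,y))>1/k\}$ is finely open, being the preimage of $\,]1/k,+\infty]$ under the function of (b), and $B_k$ is finely open (being Euclidean-open), so each $V_k$ is finely open; and $\bigcup_k V_k=U$ because for $y\in U$ one has $\Phi(G_U(\cdot,y))>0$ (hence $>1/k$ eventually) and $y\in B_k$ for some $k$ (the $B_k$ covering $\Omega\supseteq U$). Since $V_k\subseteq B_k$ with $\overline B_k\subseteq\Omega$ compact, the Euclidean closure $\overline{V_k}$ is compact and contained in $\Omega$; the remaining --- and crucial --- point is that $\overline{V_k}\subseteq U$, for which it suffices that $\Phi(G_U(\cdot,y))\to 0$ as $U\ni y\to y_0$ for every $y_0\in\Omega\setminus U$ (it is enough to treat $y_0\in b(\Omega\setminus U)$, the rest of $\Omega\setminus U$ being polar and hence negligible for the measures defining the natural topology). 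To prove this I would invoke the balayage identity $G_U(\cdot,y)=G_\Omega(\cdot,y)-\widehat{R}^{\,\Omega\setminus U}_{G_\Omega(\cdot,y)}$, observe that for $y_0\in b(\Omega\setminus U)$ the potential $G_\Omega(\cdot,y_0)$ agrees q.e.\ on $\Omega\setminus U$ with its sweep onto $\Omega\setminus U$, and then test $\Phi$ against a measure $\nu$ with finely compact support $S\subset U$, letting $y\to y_0$ by dominated convergence (legitimate since $G_U(\cdot,y)\le G_\Omega(\cdot,y)$ is dominated on $S$ by a fixed finite classical potential once $y$ lies near $y_0\notin S$). The main obstacle is exactly this relative natural compactness of $V_k$ in $U$: the quantity in question is a difference of two lower semicontinuous functions of $y$, so the limit passage $y\to y_0$ must be organized through those measures rather than pointwise, and the polar exceptional subset of $\Omega\setminus U$ must be tracked with some care; parts (a) and (b), by contrast, are soft consequences of the fine continuity of fine potentials and of $\Phi$ being a continuous base functional.
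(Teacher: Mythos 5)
The paper does not reprove Lemma~\ref{lemma4.2}; it cites \cite[Lemma~3.14]{EF1}, so there is no in-text proof to compare against. Judged on its own terms, your proposal for (a) and (b) is sound in outline (using the initial-topology description of the natural topology, symmetry of $G_U$, fine continuity of fine potentials, and the fact that $\Phi$ is a finite positive continuous affine form on $\cal S(U)$), although it does rest on a characterization of the natural topology that is not restated in the present paper and must be taken from \cite{El1}, \cite{EF1}.

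The gap is in (c), and it is conceptual rather than just a matter of detail. You argue toward showing that the \emph{Euclidean} closure $\overline{V_k}$ is a compact subset of $U$, by proving $\Phi(G_U(\cdot,y))\to0$ as $U\ni y\to y_0\in\Omega\setminus U$ Euclideanly. Even granting that (and your sketch of it is plausible via $G_U(\cdot,y)=G_\Omega(\cdot,y)-\widehat R^{\Omega\setminus U}_{G_\Omega(\cdot,y)}$, balayage, and the polar exceptional set in $b(\Omega\setminus U)$), what the lemma requires is that the \emph{natural} closure of $V_k$ in $\overline U$ --- a compact set automatically, since $\overline U$ is naturally compact --- avoid $\Delta(U)$. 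Euclidean closure in $U$ does not yield natural closure in $U$ unless you also know that $\iota:y\mapsto K(\cdot,y)$ is continuous from the Euclidean topology into the natural topology, so that $\iota(\overline{V_k}^{\rm Eucl})$ is naturally compact and hence naturally closed. Part (a) only gives fine-to-natural continuity, which is strictly weaker (the pulled-back natural topology on $U$ need not be comparable to the Euclidean one, since the maps $y\mapsto G_U\nu_j(y)/\Phi(G_U(\cdot,y))$ used to generate it are merely finely continuous). Without that bridge, a sequence $y_j\in V_k$ with $y_j\to y_0\in U$ Euclideanly could in principle have $K(\cdot,y_j)\to K(\cdot,Y)$ naturally with $Y\in\Delta(U)$, and your argument does not exclude this. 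You would need either to prove Euclidean-to-natural continuity of $\iota$ on Euclidean-compacta of $U$, or to run the compactness argument entirely inside the natural topology (e.g.\ by showing directly that if $K(\cdot,y_\alpha)\to K(\cdot,Y)$ naturally with $\Phi(G_U(\cdot,y_\alpha))>1/k$ and $y_\alpha\in B_k$, then $Y\in U$). As a smaller point, the covering claim $\bigcup_k V_k=U$ needs the balls $B_k$ to be arranged so that each $y$ lies in $B_k$ for arbitrarily large $k$ (e.g.\ by repetition or nesting), which you should make explicit.
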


\begin{definition}\label{def4.3} A nonvoid lower directed family
$\cal F\subset\cal S(U)$ is called a (fine) Perron family if
$\widehat R{}_u^{U\setminus V_k}\in\cal F$
for every $k$ and every $u\in\cal F$.
\end{definition}

\begin{theorem}\label{thm4.4} If \,$\cal F\subset\cal S(U)$ is a
Perron family then $\widehat\inf\,\cal F$ is an invariant function, and
$\widehat{\inf}\,\cal F=\inf\,\cal F$ on $\{\inf\,\cal F<+\infty\}$.
\end{theorem}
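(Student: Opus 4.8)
The statement has two parts: that $h:=\widehat{\inf}\,\cal F$ is invariant, and that it agrees with $\inf\,\cal F$ on the set where the latter is finite. I would begin with the second, more elementary assertion. Since $\cal F$ is lower directed, $\inf\,\cal F$ is the limit of a decreasing net in $\cal S(U)$, hence finely upper semicontinuous, and its finely l.s.c.\ regularization $\widehat{\inf}\,\cal F$ satisfies $\widehat{\inf}\,\cal F\le\inf\,\cal F$ everywhere. On the finely open set $W:=\{\inf\,\cal F<+\infty\}$, the decreasing net is locally bounded, so by a standard convergence theorem for finely superharmonic functions (the fine analogue of the fact that a decreasing limit of superharmonic functions, if locally bounded, differs from its regularization only on a polar set, cf.\ \cite{F1}) the two functions coincide on $W$ outside a polar set; but then, both being finely hyperharmonic on $W$, they must in fact coincide at every point of $W$, because a finely superharmonic function equals the fine-lim inf of itself and polar sets are fine-negligible in this sense.

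The substantial part is showing $h$ is invariant, i.e.\ $h\in\cal H_i(U)$, equivalently that $h$ carries no nonzero fine-potential part in its Riesz decomposition. The idea is to exploit the defining property $\widehat R{}_u^{U\setminus V_k}\in\cal F$ directly. Fix $k$. For each $u\in\cal F$ write $u_k:=\widehat R{}_u^{U\setminus V_k}$. Because $V_k$ is relatively compact in $U$ (Lemma \ref{lemma4.2}(c)), sweeping onto $U\setminus V_k$ is a "sweeping out of a relatively compact set," and a fine potential $p$ satisfies $\widehat R{}_p^{U\setminus V_k}\nearrow p$ is false in general — rather, one uses the complementary fact: if $p\in\cal P(U)$ then $\widehat R{}_p^{U\setminus V_k}\to 0$ as $k\to\infty$ (since $\bigcup_kV_k=U$ and $p$ is a fine potential, its swept-out parts off an exhausting sequence decrease to $0$). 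Conversely, an invariant function is unchanged: if $v\in\cal H_i(U)$ then $\widehat R{}_v^{U\setminus V_k}=v$ for every $k$, because invariant functions are "harmonic at infinity" in the appropriate sense. So the plan is: given the Perron family $\cal F$, show $h_k:=\inf_{u\in\cal F}\widehat R{}_u^{U\setminus V_k}$ — which by lower directedness and the Perron property lies "just below" $\cal F$ — satisfies $\widehat h_k=h$ for every $k$ (the swept-down family is cofinal in $\cal F$), and then let $k\to\infty$.

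Concretely, I would argue as follows. Since $\cal F$ is lower directed and each $\widehat R{}_u^{U\setminus V_k}\le u$ lies in $\cal F$, the family $\{\widehat R{}_u^{U\setminus V_k}:u\in\cal F\}$ is cofinal in $\cal F$, so $\widehat{\inf}\,\cal F=\widehat{\inf}\{\widehat R{}_u^{U\setminus V_k}:u\in\cal F\}$. Using that sweeping onto $U\setminus V_k$ commutes with infima of lower-directed families up to regularization (a fine-potential-theoretic fact about $\widehat R$ and decreasing nets), one gets $h=\widehat R{}_h^{U\setminus V_k}$ for every $k$; equivalently $h$ is "not swept away" by any of the $V_k$. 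Now decompose $h=p+v$ with $p\in\cal P(U)$, $v\in\cal H_i(U)$. Then $h=\widehat R{}_h^{U\setminus V_k}\le\widehat R{}_p^{U\setminus V_k}+\widehat R{}_v^{U\setminus V_k}=\widehat R{}_p^{U\setminus V_k}+v$, so $p\le\widehat R{}_p^{U\setminus V_k}\le p$ forces $\widehat R{}_p^{U\setminus V_k}=p$ for all $k$. But $p$ is a fine potential and $\{V_k\}$ exhausts $U$, so $\widehat R{}_p^{U\setminus V_k}\downarrow 0$; hence $p=0$ and $h=v$ is invariant.

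**Main obstacle.** The delicate point is the interchange of $\widehat{R}{}^{U\setminus V_k}$ with the regularized infimum of the lower-directed family $\cal F$ — i.e.\ establishing $h=\widehat R{}_h^{U\setminus V_k}$ rather than merely $h\ge\widehat R{}_h^{U\setminus V_k}$. This requires knowing that $u\mapsto\widehat R{}_u^{U\setminus V_k}$ is, in a suitable sense, continuous along decreasing nets in $\cal S(U)$ (lower semicontinuity after regularization), which is where the natural-topology structure on the cone $\cal S(U)$ and the relative compactness of $V_k$ from Lemma \ref{lemma4.2}(c) enter. I expect one must either invoke a known continuity property of fine sweeping (from \cite{F1} or \cite{F2}) or prove the cofinality statement by a direct argument using that $\cal F$ is lower directed, so that a decreasing subnet can be extracted and the regularized infimum computed pointwise fine-$\liminf$. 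The rest — the Riesz decomposition step and $\widehat R{}_p^{U\setminus V_k}\downarrow 0$ for $p\in\cal P(U)$ — is standard fine potential theory.
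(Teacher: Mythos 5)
Your proposal contains two genuine gaps, one in each half of the statement.

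\textbf{The second part.} Your argument that $\widehat{\inf}\,\cal F=\inf\,\cal F$ on $\{\inf\,\cal F<+\infty\}$ does not use the Perron property at all and is not correct. You observe that the two functions agree off a polar set and then assert that, ``both being finely hyperharmonic on $W$,'' they must agree everywhere on $W$. But $\inf\,\cal F$ is only finely \emph{upper} semicontinuous; it is not finely hyperharmonic, so the uniqueness-off-a-polar-set principle does not apply to it. Indeed, the conclusion is false for general lower-directed families in $\cal S(U)$: in a ball $\Omega$ around $0$, the functions $u_j:=\widehat R{}_1^{B(0,1/j)}$ form a decreasing sequence in $\cal S(\Omega)$ with $\inf_j u_j(0)=1$ while $\widehat\inf_j u_j(0)=0$, both finite. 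What makes the theorem true is precisely that $\cal F$ is a Perron family. The paper's proof exploits this: at a point $x$ with $\inf\,\cal F(x)<+\infty$, pick $u\in\cal F$ finite at $x$ and $k$ with $x\in V_k$; using $\widehat R{}_u^{U\setminus V_k}\in\cal F$ one shows that $u$, and then every minorant of $u$ in $\cal F$, is \emph{invariant} on a fine neighbourhood of $x$, and then invokes the convergence statement specific to invariant functions (\cite[c), p.~132]{F1}) to dispense with the regularization at $x$. Your argument cannot be repaired without bringing the Perron property into this half.

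\textbf{The invariance part.} Your cofinality observation and the commutation $\widehat R{}_{\widehat\inf\cal F}^{U\setminus V_k}=\widehat\inf\{\widehat R{}_u^{U\setminus V_k}:u\in\cal F\}$ are sound (the latter via quasi-Lindel\"of reduction of the lower-directed net to a decreasing sequence, as you anticipate), and the Riesz-decomposition step giving $\widehat R{}_p^{U\setminus V_k}=p$ for all $k$ is also fine. The flaw is the final step: the sets $V_k$ of Lemma~\ref{lemma4.2}(c) \emph{cover} $U$ but are \emph{not increasing}, so the sets $U\setminus V_k$ do not decrease and the general fact ``$\widehat R{}_p^{U\setminus V_k}\downarrow 0$ for a fine potential $p$'' is not available; moreover you cannot pass to $V_1\cup\cdots\cup V_k$ because the Perron property is stated only for the individual $V_k$. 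The correct way to conclude from $\widehat R{}_p^{U\setminus V_k}=p$ for all $k$ is local: each $\widehat R{}_p^{U\setminus V_k}$ is invariant on $V_k$ by \cite[Lemma~2.4]{EF1}, hence $p$ is invariant on every $V_k$, hence invariant on $U$ (local-to-global, \cite[Theorem~2.6(b)]{EF1}), and an invariant fine potential is $0$. The paper reaches invariance of $\widehat{\inf}\,\cal F$ even more directly, without passing through $h=\widehat R{}_h^{U\setminus V_k}$ or the Riesz decomposition: each $\widehat R{}_u^{U\setminus V_k}$ is invariant on $V_k$, so the regularized infimum of the lower-directed family $\{\widehat R{}_u^{U\setminus V_k}:u\in\cal F\}$ is invariant on $V_k$ by \cite[Theorem~2.6(c)]{EF1}, and since this equals $\widehat{\inf}\,\cal F$ restricted to $V_k$, one concludes by covering.

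In short: the skeleton of your invariance argument is in the right spirit but the decisive step is wrong because the $V_k$ are not nested, and your argument for the pointwise identity on the finiteness set is simply false without the Perron hypothesis.
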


\begin{proof} Fix $k$. Clearly
$$\widehat\inf\,\cal F
=\widehat\inf\{\widehat R{}_u^{U\setminus V_k}:u\in\cal F\},$$
and the family $\{\widehat R{}_u^{U\setminus V_k}:u\in\cal F\}$ is
lower directed in $U$. By \cite[Lemma 2.4]{EF1} each
$\widehat R{}_u^{U\setminus V_k}$ is invariant in $V_k$, and so is therefore
$\widehat{\inf}\,\cal F_{|V_k}$ according to \cite[Theorem 2.6 (c)]{EF1}.
Consequently, $\widehat{\inf}\,\cal F$ is likewise invariant, by
\cite[Theorem 2.6 (b)]{EF1}. For given $x\in\{\inf\,\cal F<+\infty\}$
(if any), choose $u\in\cal F$ with $u(x)<+\infty$ and an index $k$
so that $x\in V_k$. Then $\widehat R{}_u^{U\setminus V_k}\in\cal F$. The
restriction of $\widehat R{}_u^{U\setminus V_k}$ to the relatively open and hence
finely open subset $U\setminus\overline V{}_k$ of $U$
(cf.\ \cite[Corollary 3.15]{EF1}) is invariant according to
\cite[Lemma 2.4]{EF1}. We have $\widehat R{}_u^{U\setminus\overline V{}_k}=u$ on
$U\setminus\overline V{}_k$ by \cite[Lemma 11.10]{F1} because finely open sets
are subbasic. It follows that $\widehat R{}_u^{U\setminus V_k}=u$, and so $u$ is
invariant on $V_k$, as noted above. So is therefore every minorant of $u$
in $\cal F$, and we conclude from \cite[c), p. 132]{F1} that indeed
$\widehat{\inf}\,\cal F=\inf\,\cal F$ at the given point $x$.
\end{proof}

We are now prepared to study sweeping on $\overline U$, following in part
the classical procedure, cf.\ \cite{Do}, \cite[Section 8.2]{AG},
the main deviations being caused by the non-compactness of $\Delta(U)$.
See also Definition \ref{def4.5a} and Theorem
\ref{thm5.16} below for the analogous and actually equivalent notion of
sweeping relative to the minimal-fine topology on $\overline U$.

\begin{definition}\label{def4.5} Let $A\subset\overline U$. For any
function $u\in {\cal S}(U)$ the reduction of $u$ on $A$ is defined by
$$R{}_u^A=\inf\{v\in\cal S(U): v\ge u\;
\textrm{ on }A\cap U\;\textrm{ and on }W\cap U
\textrm{ for some }W\in\cal W(A)\},$$
where $\cal W(A)$ denotes the family of all open sets
$W\subset\overline U$ with the natural topology such that
$W\supset A\cap \Delta(U)$. The sweeping ${\widehat R}{}_u^A$ of $u$ on $A$
is defined as the regularization of $R_u^A$, that is, the greatest
finely l.s.c.\ minorant of $R_u^A$.
\end{definition}

Thus ${\widehat R}{}_u^A$ is of class $\cal S(U)$.
It is convenient to express $R{}_u^A$ and ${\widehat R}{}_u^A$ in terms of
reduction and sweeping on subsets of $U$, cf.\ \cite[1.III.5]{Do}:
$$R{}_u^A=\inf\{R{}_u^{(A\cup W)\cap U}:W\in\cal W(A)\},$$
$${\widehat R}{}_u^A
={\widehat\inf}\,\{{\widehat R}{}_u^{(A\cup W)\cap U}:W\in\cal W(A)\}.$$
In particular, for any subset $A$ of $U$, the present reduction $R{}_u^A$ and
sweeping ${\widehat R}{}_u^A$ relative to $\overline U$ reduce to the
similarly denoted usual reduction and sweeping on $A$ relative to $U$.
Note that if $A\subset\Delta(U)$ we may replace $A\cup W$ by $W$ in the above
expressions for $R{}_u^A$ and ${\widehat R}{}_u^A$.

By the fundamental convergence theorem \cite[Theorem 11.8]{F1} and the
quasi-Lindel{\"o}f property for finely u.s.c.\ functions
(cf.\ \cite[\S3.9]{F1} for finely l.s.c.\ functions), there is a decreasing
sequence $(W_j)$ of sets $W_j\in\cal W(A)$ (depending on $u$) such that
it suffices to take for $W$ the sets $W_j$, in the above definitions and
alternative expressions.

\begin{remark}\label{remark2.4} If $A\subset\Delta(U)$ then $\cal W(A)$ is the
family of all open sets $W\subset\overline U$ containing $A$, and it then
suffices to take for $W$ a decreasing sequence of open sets $W_j\supset A$
(depending on $u$)
such that $\bigcap_j\overline W_j\subset\overline A$. In fact, $\overline A$
is the intersection of a decreasing sequence of open sets
$V_j\subset\overline U$, and we merely have to replace the above $(W_j)$ by
the decreasing sequence of open sets $W_j\cap V_j\in\cal W(A)$ whose
intersection clearly is contained in $\overline A$. If $A$ is a compact
subset of $\Delta(U)$ we may therefore take $W_j=V_j$ (independently of $u$).
\end{remark}

\begin{prop}\label{prop4.6} Let $A$ and $B$ be two subsets of
$\overline U$ and let $u,v\in {\cal S}(U)$ and $0<\alpha<+\infty$. Then

{\rm{1.}} ${\widehat R}{}_u^{A\cup B}\le{\widehat R}{}_u^A+{\widehat R}{}_u^B$.

{\rm{2.}} If $A\subset B$ then ${\widehat R}{}_u^A\le {\widehat R}{}_u^B$.

{\rm{3.}} If $0$ times $+\infty$ is defined to be $0$ then
${\widehat R}{}_{\alpha u}^A = \alpha{\widehat R}{}_u^A.$

{\rm{4.}} ${\widehat R}{}_{u+v}^A= {\widehat R}{}_u^A+{\widehat R}{}_v^A$.

{\rm{5.}} For any decreasing sequence of functions $u_j\in\cal S(U)$ we have
$\widehat\inf_j\,\widehat R{}_{u_j}^A
=\widehat R{}_{{\widehat\inf}_j\,u_j}^A$.

{\rm{6.}} If  $A\subset B$ then
${\widehat R}{}_{{\widehat R}{}_u^A}^B=
 {\widehat R}{}_{{\widehat R}{}_u^B}^A={\widehat R}{}_u^A$.
\end{prop}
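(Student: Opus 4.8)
The plan is to deduce each assertion from the corresponding classical property of reductions and sweepings on subsets of $U$ --- monotonicity in the reduced function, positive homogeneity, additivity, countable subadditivity, commutation with regularized decreasing limits, and the tower (idempotency) property; see \cite[Section 11]{F1}, \cite{Do}, \cite[Section 8.2]{AG} --- by means of the two expressions
$$R{}_u^A=\inf_{W\in\cal W(A)}R{}_u^{(A\cup W)\cap U},\qquad
\widehat R{}_u^A=\widehat\inf_{W\in\cal W(A)}\widehat R{}_u^{(A\cup W)\cap U}$$
recorded just above. The key device is the existence, for given $u\in\cal S(U)$ and $A\subset\overline U$, of a decreasing sequence $(W_j)$ in $\cal W(A)$ for which it already suffices to let $W$ run through the $W_j$ (call such a sequence \emph{adapted} to $u$ on $A$); moreover, if $(W_j')$ is any decreasing sequence in $\cal W(A)$ with $W_j'\subset W_j$, it is again adapted, because each function $R{}_u^{(A\cup W_j')\cap U}$ still lies between $R{}_u^A$ and $R{}_u^{(A\cup W_j)\cap U}$ (and likewise for the regularized versions). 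Hence, given finitely many functions and/or sets, one may pass by intersection to a single decreasing sequence $(W_j)$ adapted to all of them at once. Throughout I abbreviate $E_j:=(A\cup W_j)\cap U$, so that $E_j$ decreases and $\widehat R{}_u^A=\widehat\inf_j\widehat R{}_u^{E_j}$. Monotonicity of $R{}_u^A$ and $\widehat R{}_u^A$ in $u$, used repeatedly below, is immediate from Definition \ref{def4.5}.

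I would also record a short lemma: for decreasing sequences $(a_j)$, $(b_j)$ in $\cal S(U)$ and $0<\alpha<+\infty$, $\widehat\inf_j(a_j+b_j)=\widehat\inf_j a_j+\widehat\inf_j b_j$ and $\widehat\inf_j(\alpha a_j)=\alpha\,\widehat\inf_j a_j$. Indeed $\inf_j(a_j+b_j)=\inf_j a_j+\inf_j b_j$ pointwise; by the fundamental convergence theorem \cite[Theorem 11.8]{F1} each of $\inf_j a_j$ and $\inf_j b_j$ coincides with its regularization off a semipolar set; and two members of $\cal S(U)$ that coincide off a semipolar (hence finely non-dense) set coincide everywhere, since they are finely continuous. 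Thus $\widehat\inf_j a_j+\widehat\inf_j b_j\in\cal S(U)$ equals $\inf_j(a_j+b_j)$ off a semipolar set and is therefore its regularized infimum; the scalar identity is even easier.

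Parts 1--5 are then routine. For Part 2 note that $A\subset B$ forces $\cal W(B)\subset\cal W(A)$ and $(A\cup W)\cap U\subset(B\cup W)\cap U$. For Part 1, choose decreasing sequences $(W_j)\subset\cal W(A)$ and $(W_j')\subset\cal W(B)$ adapted to $u$; then $W_j\cup W_j'\in\cal W(A\cup B)$ is decreasing and $(A\cup B\cup W_j\cup W_j')\cap U=E_j\cup E_j'$, so by subadditivity on $U$ and the lemma,
$$\widehat R{}_u^{A\cup B}\le\widehat\inf_j\widehat R{}_u^{E_j\cup E_j'}\le\widehat\inf_j\bigl(\widehat R{}_u^{E_j}+\widehat R{}_u^{E_j'}\bigr)=\widehat R{}_u^A+\widehat R{}_u^B.$$
Parts 3 and 4 follow the same pattern, taking $(W_j)$ adapted simultaneously to the finitely many functions involved and invoking $\widehat R{}_{\alpha u}^{E}=\alpha\widehat R{}_u^{E}$, resp.\ $\widehat R{}_{u+v}^{E}=\widehat R{}_u^{E}+\widehat R{}_v^{E}$, on subsets $E$ of $U$ (the convention $0\cdot(+\infty)=0$ absorbs the degenerate value). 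For Part 5, let $u=\widehat\inf_i u_i$ with $(u_i)$ decreasing; monotonicity gives $\widehat R{}_u^A\le\widehat\inf_i\widehat R{}_{u_i}^A$, while for a decreasing $(W_j)\subset\cal W(A)$ adapted to $u$ and for every $j$, commutation on $U$ yields $\widehat\inf_i\widehat R{}_{u_i}^A\le\widehat\inf_i\widehat R{}_{u_i}^{E_j}=\widehat R{}_u^{E_j}$; as the left side lies in $\cal S(U)$, it is $\le\widehat\inf_j\widehat R{}_u^{E_j}=\widehat R{}_u^A$, whence equality.

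The substantive part is Part 6, which I expect to be the main obstacle. I would first establish the idempotency $\widehat R{}_{\widehat R{}_u^A}^A=\widehat R{}_u^A$. Put $s:=\widehat R{}_u^A$, fix a decreasing $(W_j)\subset\cal W(A)$ adapted to both $u$ and $s$, and set $E_j:=(A\cup W_j)\cap U$ and $s_j:=\widehat R{}_u^{E_j}$, so that $(s_j)$ decreases, $s=\widehat\inf_j s_j$, and $\widehat R{}_s^A=\widehat\inf_j\widehat R{}_s^{E_j}$. By commutation on $U$, $\widehat R{}_s^{E_j}=\widehat R{}_{\widehat\inf_i s_i}^{E_j}=\widehat\inf_i\widehat R{}_{s_i}^{E_j}$; and for $i\ge j$ we have $E_i\subset E_j$, so the tower property on $U$ gives $\widehat R{}_{s_i}^{E_j}=\widehat R{}_{\widehat R{}_u^{E_i}}^{E_j}=\widehat R{}_u^{E_i}=s_i$. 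Therefore $\widehat R{}_s^{E_j}=\widehat\inf_{i\ge j}s_i=s$ for every $j$, whence $\widehat R{}_s^A=\widehat\inf_j s=s$. Granting this, the two displayed equalities of Part 6 follow: from $\widehat R{}_u^A\le\widehat R{}_u^B\le u$ (Part 2, and reduction is $\le$ the reduced function) and monotonicity in the reduced function, $\widehat R{}_{\widehat R{}_u^B}^A\le\widehat R{}_u^A$, while $\widehat R{}_u^B\ge\widehat R{}_u^A$ gives $\widehat R{}_{\widehat R{}_u^B}^A\ge\widehat R{}_{\widehat R{}_u^A}^A=\widehat R{}_u^A$; similarly $\widehat R{}_{\widehat R{}_u^A}^B\le\widehat R{}_u^A$ trivially and $\widehat R{}_{\widehat R{}_u^A}^B\ge\widehat R{}_{\widehat R{}_u^A}^A=\widehat R{}_u^A$ by Part 2. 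The delicate points throughout are organizational --- arranging that one decreasing sequence $(W_j)$ serves all the functions and sets in play --- together with the need to know that the full classical reduction calculus on subsets of $U$, in particular commutation with regularized decreasing infima and the tower property, is available in the fine setting; this is where \cite[Section 11]{F1} does the work.
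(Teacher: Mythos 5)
Your overall plan---reduce every assertion to the corresponding classical fact for subsets of $U$ through the alternative expressions $\widehat R{}_u^A=\widehat\inf_W\widehat R{}_u^{(A\cup W)\cap U}$ and a decreasing sequence $(W_j)$ adapted to the functions in play---is exactly the paper's strategy, and Parts 1, 2, 3, 6 and the organizational device of intersecting adapted sequences are sound. Your auxiliary lemma on $\widehat\inf_j(a_j+b_j)$ is a clean alternative to the paper's appeal to natural limits of decreasing nets via \cite[Theorem 2.9]{EF1} in Part 4; both work.

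The one place where your proposal has a genuine gap is Part 5, precisely the spot you yourself flagged. You invoke ``commutation of sweeping on subsets of $U$ with regularized decreasing infima'' as a known fact residing in \cite[Section 11]{F1}, but that section gives the fundamental convergence theorem and the increasing-limit property \cite[Theorem 11.12]{F1}, not the decreasing one. The paper does not cite this commutation for $E\subset U$; it \emph{proves} it, using \cite[Lemmas 2.1 and 2.3]{EF1} to write
$R{}_{u_j}^{(A\cup W)\cap U}(x)=\int_U u_j\,d\eps_x^{\complement(U\setminus(A\cup W))}$,
passing the pointwise infimum inside by dominated convergence (finiteness coming from $u_1(x)<+\infty$), and then handling the polar set $\{\inf_j u_j\ne\widehat\inf_j u_j\}$ by a two-case argument: either $x\in b((A\cup W)\cap U)$, so the swept measure is $\eps_x$; or else it does not charge polar sets. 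Only then does the q.e.\ identity regularize to hold everywhere. Your plan for Part 5 (and hence the tower step $\widehat R{}_s^{E_j}=\widehat\inf_i\widehat R{}_{s_i}^{E_j}$ inside Part 6) would need this computation spelled out, since it is not a quotable fact from the cited source. Once that is supplied, the rest of your argument goes through and coincides in substance with the paper's.
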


\begin{proof} Property 1.\ is established just as in
\cite[(4.1), p.\ 39]{Do} (with $v=0$): For $u_A\in\cal S(U)$ with
$u_A\ge u$ on $A\cup W_A$ for some $W_A\in\cal W(A)$
and analogous $u_B,W_B$ we have $u_A+u_B\in\cal S(U)$ and $u_A+u_B\ge u$ on
$A\cup B\cup W_A\cup W_B$ with $W_A\cup W_B\in\cal W(A\cup B)$. This
implies $R{}_u^{A\cup B}\le R{}_u^A+R{}_u^B$. The asserted
inequality therefore holds quasieverywhere and hence everywhere on $U$, by
fine continuity.
Property 2.\ follows from $\cal W(A)\supset\cal W(B)$. Property 3.\ follows
from
$\widehat R{}_{\alpha u}^{(A\cup W)\cap U}=\alpha\widehat R{}_u^{(A\cup W)\cap U}$
by taking $\widehat{\inf}$ over $W\in\cal W(A)$.
As to 4.\ we have for any $W\in\cal W(A)$
$\widehat R{}_{u+v}^{(A\cup W)\cap U}
=\widehat R{}_u^{(A\cup W)\cap U}+\widehat R{}_v^{(A\cup W)\cap U}$,
whence the asserted equation by taking the natural limits of the decreasing
nets on $\cal S(U)$ in question as the index $W$ ranges over the lower directed
family $\cal W(A)$, cf.\ \cite[Theorem 2.9]{EF1}.

Concerning 5., according to \cite[Lemmas 2.1 and 2.3]{EF1}, $u_j$ is Euclidean
Borel measurable and
$$
R_{u_j}^{(A\cup W)\cap U}(x)
=\int_Uu_j\,d\eps_x^{\complement(U\setminus(A\cup W))}\le u_j(x)\le u_1(x)
$$
(complements relative to $\Omega$). For given $x\in U$ with $u_1(x)<+\infty$
and $W\in\cal W(A)$ consider the equalities
\begin{eqnarray*}\inf_jR{}_{u_j}^{(A\cup W)\cap U}(x)\!\!\!
&=&\!\!\!\inf_j\int_Uu_j\,d\eps_x^{\complement(U\setminus(A\cup W))}
=\int_U\inf_ju_j\,d\eps_x^{\complement(U\setminus(A\cup W))}\\
&=&\!\!\!\int_U\widehat{\inf_j}\,u_j\,d\eps_x^{\complement(U\setminus(A\cup W))}
=R_{\widehat\inf_ju_j}^{(A\cup W)\cap U}(x).
\end{eqnarray*}
The first and the last equalities hold by \cite[Lemma 2.3]{EF1}. The second
equality is obvious (Lebesgue), the integrals being finite by hypothesis.
The third equality holds if $\widehat\inf_j u_j(x)=\inf_ju_j(x)$, for either
$x\in U\cap b((A\cup W)\cap U)$, and then
$\eps_x^{\complement(U\setminus(A\cup W))}=\eps_x$; or else
$x\in U\setminus b((A\cup W)\cap U)$, and then
$\eps_x^{\complement(U\setminus(A\cup W))}$
does not charge the polar set $\{\inf_ju_j\ne\widehat{\inf_j}\,u_j\}$.
The resulting equality in the above display thus
holds q.e.\ for $x\in U$, and hence also everywhere on $U$ after finely
l.s.c.\ regularization of both members.

Property 6.\ is known for $A,B\subset U$. For general
$A,B\subset\overline U$, say with $A\subset B$,
the first and the second member of the equalities in 6.\ lie between
${\widehat R}{}_{{\widehat R}{}_u^A}^A$ and $\widehat R{}_u^A$ in view of 2.,
and it therefore suffices to consider the case where $A=B$.
For given $W\in\cal W(A)$ consider a decreasing sequence
$(W_j)\subset\cal W(A)$ such that ${\widehat R}{}_u^A
=\widehat{\inf}_j\,{\widehat R}{}_u^{(A\cup W_j)\cap U}.$
Replacing $W_j$ by $W_j\cap W$ we achieve that $W_j\subset W$, and hence
$$
\widehat R{}_{\widehat R{}_u^{(A\cup W_j)\cap U}}^{(A\cup W)\cap U}
=\widehat R{}_u^{(A\cup W_j)\cap U}.
$$
According to 5.\ this implies 6.\ by taking
${\widehat\inf}_j$ and next taking
$\widehat{\inf}_{W\in\cal W(A)}$.
\end{proof}

\begin{prop}\label{prop4.7} Let $u\in {\cal S}(U)$.
For any subset $A$ of \,$\Delta(U)$
the function  ${\widehat R}{}_u^A$ is invariant, and we have
${\widehat R}{}_u^A\preccurlyeq u$.
\end{prop}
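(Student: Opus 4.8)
The plan is to establish the two assertions separately, reusing the ingredients of the proof of Theorem~\ref{thm4.4} for the invariance and the specific-order behaviour of ordinary sweeping on subsets of $U$ for the relation $\widehat R{}_u^A\preccurlyeq u$. First, since $A\subset\Delta(U)$ one has $A\cap U=\emptyset$, so by the remarks following Definition~\ref{def4.5} one may fix a decreasing sequence $(W_j)$ in $\cal W(A)$, depending on $u$, with
\[
\widehat R{}_u^A=\widehat{\inf}_j\,\widehat R{}_u^{W_j\cap U},
\]
where each $W_j\cap U$ is an ordinary subset of $U$, so that $\widehat R{}_u^{W_j\cap U}$ is the usual sweeping of $u$ on it relative to $U$.

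\emph{Invariance.} Fix $k$. By Lemma~\ref{lemma4.2}(c) the natural closure $\overline V{}_k$ is a compact subset of $U$, hence a closed subset of the compact Hausdorff space $\overline U$ that is disjoint from $\Delta(U)\supset A$; consequently each $W_j\setminus\overline V{}_k$ is open in $\overline U$, still contains $A$, and so belongs to $\cal W(A)$. Using the monotonicity of sweeping (Proposition~\ref{prop4.6}) together with the expression of $\widehat R{}_u^A$ as a $\widehat{\inf}$ over $\cal W(A)$, one checks that $\widehat R{}_u^A=\widehat{\inf}_j\,\widehat R{}_u^{E_j}$, where $E_j:=(W_j\setminus\overline V{}_k)\cap U$ is a decreasing sequence of subsets of $U$, each contained in $U\setminus\overline V{}_k$ and hence disjoint from the finely open set $V_k$, which is therefore disjoint from the fine closure $\widetilde E_j$ as well. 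By \cite[Lemma 2.4]{EF1} — the fact already used in the proof of Theorem~\ref{thm4.4} to see that $\widehat R{}_u^{U\setminus V_k}$ is invariant in $V_k$ — each $\widehat R{}_u^{E_j}$ is invariant on $V_k$, and since $(\widehat R{}_u^{E_j})_j$ decreases it follows from \cite[Theorem 2.6(c)]{EF1} that $\widehat R{}_u^A$ is invariant on $V_k$. As the sets $V_k$ cover $U$, $\widehat R{}_u^A$ is invariant on $U$ by \cite[Theorem 2.6(b)]{EF1}.

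\emph{The relation $\widehat R{}_u^A\preccurlyeq u$.} For each $j$ put $p_j:=u-\widehat R{}_u^{W_j\cap U}$; since $\widehat R{}_u^{W_j\cap U}$ is the usual sweeping of $u$ on a subset of $U$, $p_j\in\cal S(U)$. As $(W_j)$ decreases, $(\widehat R{}_u^{W_j\cap U})_j$ decreases and so $(p_j)_j$ is increasing; being bounded above by $u$, which is finite quasi-everywhere, its increasing limit $p=\sup_j p_j$ is again of class $\cal S(U)$. Now $p=u-\inf_j\widehat R{}_u^{W_j\cap U}=u-\widehat R{}_u^A$ quasi-everywhere, because $\inf_j\widehat R{}_u^{W_j\cap U}$ and its finely l.s.c.\ regularization $\widehat R{}_u^A$ agree off a polar set (fundamental convergence theorem, \cite[Theorem 11.8]{F1}). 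Hence $p+\widehat R{}_u^A$ and $u$ are functions of class $\cal S(U)$ — recall $\widehat R{}_u^A\in\cal S(U)$, cf.\ the comment after Definition~\ref{def4.5} — agreeing quasi-everywhere, hence everywhere by fine continuity; therefore $u-\widehat R{}_u^A=p\in\cal S(U)$, i.e.\ $\widehat R{}_u^A\preccurlyeq u$.

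\emph{Expected obstacle.} The only delicate point is the reduction to the sets $E_j$: one must verify that $\overline V{}_k$ is closed in $\overline U$ (so that $W_j\setminus\overline V{}_k$ is again admissible in $\cal W(A)$) and that deleting $\overline V{}_k$ does not enlarge the infimum of the sweepings — the role of $\overline V{}_k$ being precisely to separate $E_j$ from $V_k$, after which \cite[Lemma 2.4]{EF1} applies exactly as in Theorem~\ref{thm4.4}. One might instead try to realise $\{\widehat R{}_u^{W\cap U}:W\in\cal W(A)\}$ itself as a Perron family and quote Theorem~\ref{thm4.4}, but identifying $\widehat R{}_{\widehat R{}_u^{W\cap U}}^{U\setminus V_k}$ as a member of that family is awkward, which is why the direct argument is to be preferred. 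Granted the invariance, the relation $\widehat R{}_u^A\preccurlyeq u$ is then routine, resting only on the fact that ordinary reductions on subsets of $U$ are specific minorants.
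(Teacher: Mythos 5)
Your invariance argument is correct and closely tracks the paper's, though you unpack it directly rather than through the Perron family layer: the paper verifies that $\{\widehat R{}_u^{W\cap U}:W\in\cal W(A)\}$ is a Perron family (after restricting to $W\subset\overline U\setminus A_{kk}$) and then quotes Theorem~\ref{thm4.4}, while you reduce by hand to the sets $E_j=(W_j\setminus\overline V{}_k)\cap U$ and apply \cite[Lemma 2.4]{EF1} and \cite[Theorem~2.6(b),(c)]{EF1} directly. Both routes rest on the same lemmas and are equivalent in substance; yours is marginally more self-contained, the paper's is shorter once Theorem~\ref{thm4.4} is available.

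The second half contains a genuine gap. You assert that $p_j:=u-\widehat R{}_u^{W_j\cap U}$ lies in $\cal S(U)$ because ``ordinary reductions on subsets of $U$ are specific minorants.'' That is false: sweeping on a proper subset $E\subset U$ does \emph{not} in general give a specific minorant of $u$. Already in the classical case, take $U$ a Green domain, $u=G_U(\cdot,x_0)$, and $E$ a small ball around $x_0$; then $u-\widehat R{}_u^E$ vanishes on $\overline E$ and is strictly positive just outside, so it fails the mean-value inequality at boundary points of $E$ and is not superharmonic. So $p_j\notin\cal S(U)$ in general, and the limit $p$ you construct need not witness the specific order. The relation $\widehat R{}_u^E\preccurlyeq u$ for $E\subset U$ holds under extra hypotheses (for instance when the swept function is invariant, or when $E$ is finely closed and one works on the finely open complement), but not for arbitrary finely open $E$ such as $W_j\cap U$.

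The correct (and shorter) conclusion is the one the paper draws: having established that $\widehat R{}_u^A$ is invariant, and noting the trivial inequality $\widehat R{}_u^A\le u$, invoke \cite[Lemma 2.2]{EF1}, which states precisely that an invariant minorant is a specific minorant. That single citation replaces your entire second paragraph and avoids the false premise.
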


\begin{proof} Consider the family
$$\cal F:=\{\widehat R{}_u^{W\cap U}:W\in\cal W(A)\}.$$
Clearly, $\cal F$ is lower directed. Consider the compact sets
$A_{kl}\subset U$ in the proof of \cite[Proposition 3.10]{EF1}. Fix $k$ and $V_k$
from Lemma \ref{lemma4.2}. In view of that lemma and the text preceding it,
$\overline U\setminus A_{kk}$ is open in
$\overline U$ and contains $\Delta(U)$. In Definition \ref{def4.5} of
${\widehat R}{}_u^A$ it therefore suffices to
consider open sets $W\supset A$
such that $W\subset\overline U\setminus A_{kk}$, whereby
$W\cap U\subset U\setminus A_{kk}\subset U\setminus V_k$. By 6.\ in Proposition
\ref{prop4.6} we then have
${\widehat R}{}_{{\widehat R}{}_u^{W\cap U}}^{U\setminus V_k}=\widehat R{}_u^{W\cap U}$.
The lower directed family
$\cal F:=\{\widehat R{}_u^{W\cap U}:W\in\cal W(A)\}$ is therefore a
Perron family in the sense of Definition \ref{def4.3}. By Definition
\ref{def4.5} we have ${\widehat R}{}_u^A=\widehat\inf\,\cal F$, and it
therefore follows by Theorem \ref{thm4.4} that ${\widehat R}{}_u^A$ indeed
is invariant. Consequently, ${\widehat R}{}_u^A\preccurlyeq u$
in view of \cite[Lemma 2.2]{EF1}.
\end{proof}

\begin{prop}\label{prop4.7a} Let $u\in\cal S(U)$.
{\rm{(a)}} For any increasing sequence $(A_j)$ of subsets of \,$\overline U$
we have ${\widehat R}{}_u^{\bigcup_jA_j}=\sup_j{\widehat R}{}_u^{A_j}.$

{\rm{(b)}} For any sequence $(A_j)$ of subsets of \,$\overline U$ we have
${\widehat R}{}_u^{\bigcup_jA_j}\le\sum_j{\widehat R}{}_u^{A_j}.$
\end{prop}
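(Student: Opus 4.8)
The plan is to reduce everything to the corresponding statements for reductions on subsets of $U$, where monotone convergence properties are classical, and then pass to the limit over the family $\cal W(A)$. Throughout we use the alternative expression
$${\widehat R}{}_u^A
={\widehat\inf}\,\{{\widehat R}{}_u^{(A\cup W)\cap U}:W\in\cal W(A)\}$$
from Definition \ref{def4.5}, together with the fact (recorded just before Remark \ref{remark2.4}) that for each fixed $u$ the infimum may be taken over a single decreasing sequence $W_j\in\cal W(A)$. The main point is that a diagonal choice of such sequences works simultaneously for all the $A_j$.

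For part (a), write $A=\bigcup_jA_j$ with $A_j$ increasing. The inequality $\sup_j{\widehat R}{}_u^{A_j}\le{\widehat R}{}_u^A$ is immediate from monotonicity (Proposition \ref{prop4.6}.2). For the reverse inequality, first choose a decreasing sequence $(W_i)\subset\cal W(A)$ realizing ${\widehat R}{}_u^A={\widehat\inf}_i\,{\widehat R}{}_u^{(A\cup W_i)\cap U}$. Since $A_j\subset A$ we have $W_i\in\cal W(A_j)$, so $A_j\cup W_i\subset A\cup W_i$ is an increasing union in $j$ with union $A\cup W_i$ (note $(A_j\cup W_i)\cap U\nearrow (A\cup W_i)\cap U$). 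Now invoke the classical countable-union property of sweeping on subsets of $U$ — namely ${\widehat R}{}_v^{\bigcup_j E_j}=\sup_j{\widehat R}{}_v^{E_j}$ for increasing $E_j\subset U$, see \cite[Section 11]{F1} — applied with $v=u$ and $E_j=(A_j\cup W_i)\cap U$, to get ${\widehat R}{}_u^{(A\cup W_i)\cap U}=\sup_j{\widehat R}{}_u^{(A_j\cup W_i)\cap U}\le\sup_j{\widehat R}{}_u^{A_j}$, the last step because $W_i\in\cal W(A_j)$. Taking ${\widehat\inf}_i$ on the left gives ${\widehat R}{}_u^A\le\sup_j{\widehat R}{}_u^{A_j}$ after finely l.s.c.\ regularization, which is harmless since the right-hand side is already in $\cal S(U)$ (an increasing sup of functions in $\cal S(U)$ that is finite on a finely dense set, being majorized by $u$ on $\{u<\infty\}$).

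Part (b) follows from (a) by a standard device: set $A_j':=A_1\cup\dots\cup A_j$ and $A:=\bigcup_jA_j=\bigcup_jA_j'$. By (a), ${\widehat R}{}_u^A=\sup_j{\widehat R}{}_u^{A_j'}$, and by finite subadditivity (Proposition \ref{prop4.6}.1, iterated) ${\widehat R}{}_u^{A_j'}\le\sum_{i=1}^j{\widehat R}{}_u^{A_i}\le\sum_{i=1}^\infty{\widehat R}{}_u^{A_i}$; let $j\to\infty$.

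The step I expect to be the main obstacle is the interchange of ${\widehat\inf}_i$ (over $W_i$) with $\sup_j$ (over the $A_j$) in part (a): a priori one only has ${\widehat R}{}_u^A\le{\widehat\inf}_i\sup_j{\widehat R}{}_u^{(A_j\cup W_i)\cap U}$, and one must argue that this double limit equals $\sup_j{\widehat R}{}_u^{A_j}$ rather than something larger. The resolution is that the sequence $(W_i)$ was chosen to depend only on $u$ and $A$, not on $j$, so that the chain of inequalities above in fact bounds the double limit from above by $\sup_j{\widehat R}{}_u^{A_j}$ directly — there is no genuine interchange once the $W_i$ are fixed first. One should also check that the classical monotone-union identity for reductions on subsets of $U$ is available in the fine-potential-theoretic setting; this is \cite[Section 11]{F1}, but it is worth stating explicitly since it is the engine of the argument, and it is here (rather than at the Martin boundary) that all the real work has already been done.
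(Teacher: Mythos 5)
There is a genuine gap in part (a): the central inequality is reversed. You write
$$\widehat R{}_u^{(A\cup W_i)\cap U}=\sup_j\widehat R{}_u^{(A_j\cup W_i)\cap U}\le\sup_j\widehat R{}_u^{A_j},$$
justifying the last step "because $W_i\in\cal W(A_j)$." But $W_i\in\cal W(A_j)$ yields the \emph{opposite} inequality: by Definition \ref{def4.5}, $\widehat R{}_u^{A_j}$ is the regularized infimum of $\widehat R{}_u^{(A_j\cup W)\cap U}$ over $W\in\cal W(A_j)$, so $\widehat R{}_u^{(A_j\cup W_i)\cap U}$ is one of the competitors being minimized over and hence $\widehat R{}_u^{A_j}\le\widehat R{}_u^{(A_j\cup W_i)\cap U}$, not $\ge$. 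Consequently all your argument establishes is $\widehat R{}_u^{(A\cup W_i)\cap U}\ge\sup_j\widehat R{}_u^{A_j}$, and taking $\widehat\inf_i$ gives $\widehat R{}_u^A\ge\sup_j\widehat R{}_u^{A_j}$ --- which is the trivial direction, already dispatched at the start via monotonicity.

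This is precisely the point you flag at the end as "the main obstacle," but the resolution you offer --- that fixing $(W_i)$ first, depending only on $A$ and $u$, removes the interchange problem --- does not help: the $W_i$ must be open neighborhoods of $A\cap\Delta(U)$, hence of each $A_j\cap\Delta(U)$, and for each fixed $j$ these $W_i$ are in general far from optimal for $A_j$; they only give \emph{upper} bounds for $\widehat R{}_u^{A_j}$, never the needed lower bounds. The difficulty is real, and the paper handles it by a more delicate Doob-type argument (cf.\ \cite[p.~74, Proof of (e)]{Do}): one fixes $x$, chooses for each $j$ a near-optimal pair $(W_j,v_j)$ with $v_j\ge u$ on $W_j\cap U$ and $v_j(x)\le R{}_u^{A_j\cap\Delta(U)}(x)+2^{-j}$, verifies that $\widehat R{}_u^{A_j\cap\Delta(U)}\preccurlyeq v_j$ (using invariance from Proposition \ref{prop4.7}), and then shows that the function $u'_k:=v+\sum_{j\ge k}(v_j-\widehat R{}_u^{A_j\cap\Delta(U)})$ is of class $\cal S(U)$ and, after adding $\delta s$ for an auxiliary superharmonic $s$ handling a polar exceptional set, majorizes $u$ on $A\cap U$ and on $W_j\cap U$ for $j\ge k$. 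Letting $\delta\to0$ and $k\to\infty$ yields $v\ge\widehat R{}_u^A$ q.e.\ and hence everywhere. That summation trick is what replaces the invalid interchange. Part (b) of your argument (reduce to finite unions and use (a) together with finite subadditivity) is correct and coincides with the paper's deduction.
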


\begin{proof}
For (a) we proceed much as in \cite[p. 74, Proof of (e)]{Do} (where $U$ is a
Euclidean Green domain).
Writing $A=\bigcup_jA_j$ and $v=\sup_j\widehat R{}_u^{A_j}$ the inequality
$v\le\widehat R{}_u^A$ is obvious. For the opposite inequality we shall also
consider $\widehat R{}_u^{A_j\cap\Delta(U)}$. Consider a point
$x\in U$ for which $u(x)<+\infty$ and
$\widehat R{}_u^{A_j\cap\Delta(U)}(x)=R{}_u^{A_j\cap\Delta(U)}(x)$.
For any integer $j>0$ there exists
$W_j\in\cal W(A_j\cap\Delta(U))=\cal W(A_j)$ and
$v_j\in\cal S(U)$ such that $v_j\ge u$ on $W_j\cap U$ and
$$
v_j(x)\le R{}_u^{A_j\cap\Delta(U)}(x)+2^{-j}
=\widehat R{}_u^{A_j\cap\Delta(U)}(x)+2^{-j}.
$$
The swept function $\widehat R{}_u^{A_j\cap\Delta(U)}$
is invariant by Proposition \ref{prop4.7}, and
$\widehat R{}_u^{A_j\cap\Delta(U)}\le\widehat R{}_u^{W_j\cap U}
\le R{}_u^{W_j\cap U}\le v_j$.
Hence $\widehat R{}_u^{A_j\cap\Delta(U)}\preccurlyeq v_j$.
We show that for any integer $k>0$ the function
\begin{eqnarray}u'_k\!\!\!
&:=&\!\!\!v+\sum_{j\ge k}(v_j-\widehat R{}_u^{A_j\cap\Delta(U)})
\end{eqnarray}
is of class $\cal S(U)$. In the first place, each term in the sum is of class
$\cal S(U)$.
Because $v_j$ is finely continuous and $R{}_u^{A_j\cap\Delta(U)}$ is finely
u.s.c.\ there is a fine neighborhood $V$ of $x$ with Euclidean
compact closure $\overline V$ in $\Omega$ contained in $U$ and such that
$v_j\le R{}_u^{A_j\cap\Delta(U)}+2^{1-j}$ on $V$ and hence on $\overline V$, by fine
contimuity.  We may further arrange that $u$ is bounded on $\overline V$
and that $\widehat R{}_u^{A_j\cap\Delta(U)}=\widehat R{}_u^{W_j\cap U}$ on
$\overline V$. Then
$$
\int(v_j-\widehat R{}_u^{A_j\cap\Delta(U)})d\eps_x^{\Omega\setminus V}
\le v_j(x)-\widehat R{}_u^{A_j\cap\Delta(U)}(x)\le2^{1-j},
$$
since $\eps_x^{\Omega\setminus V}$ is carried by $\overline V$ and does not
charge any polar set. See also\cite[Section 8.4]{F1}. It follows that
the finely hyper\-harmonic sum in (2.1) is of class $\cal S(U)$,
having a finite integral with respect to $\eps_x^{\Omega\setminus V}$.
For any
$W\in\cal W(A_j)$ we have $\widehat R{}_u^{(A_j\cup W)\cap U}=u$ q.e.\ on
$(A_j\cup W)\cap U$, in particular q.e.\ on $A_j\cap U$. By Definition
\ref{def4.5} we have $\widehat R{}_u^{A_j}=u$
q.e.\ on $A_j\cap U$ (because it suffices to consider a suitable sequence
of sets $W$). It follows that $v=u$ q.e.\ on each $A_j\cap U$ and hence also
q.e.\ on $A\cap U$. Choose a  super\-harmonic function $s>0$ on $\Omega$ such
that $s(y)=+\infty$ for every $y$ in the polar set
$\{y\in A\cap U:v(y)\ne u(y)\}\cup\bigcup_{j>0}
\{y\in A\cap U:\widehat R{}_u^{A_j\cap\Delta(U)}\ne R{}_u^{A_j\cap\Delta(U)}.$
For any $\delta>0$ we then have $u'_k+\delta s\ge v+\delta s\ge u$ on $A\cap U$.
Because $v\ge\widehat R{}_u^{A_j}\ge\widehat R{}_u^{A_j\cap\Delta(U)}$
we obtain for $j\ge k$
$$
u'_k\ge\widehat R{}_u^{A_j\cap\Delta(U)}+(v_j-\widehat R{}_u^{A_j\cap\Delta(U)})=v_j,
$$
and hence $u'_k\ge v_j\ge u$ on $W_j\cap U$ for $j\ge k$. Altogether,
$u'_k+\delta s\ge u$ on $A\cap U$ and on $W_j\cap U$. It follows by
Definition \ref{def4.5} that
$u'_k+\delta s\ge\widehat R{}_u^A$, and hence for $\delta\to0$ that
$u'_k\ge\widehat R{}_u^A$ q.e.\ and actually everywhere on $U$.
But $u'_k\searrow v$ q.e.\ (namely at each point where $u'_1$ is finite).
Consequently $v\ge\widehat R{}_u^A$ q.e.\ on $U$ and so indeed everywhere
on $U$.

(b) is easily deduced from (a) applied with $A_j$ replaced by
$A_1\cup\ldots\cup A_j$, in view of 1.\ in Proposition \ref{prop4.6}
(extended to finite unions). There is also a simple direct proof, cf.\
\cite[Lemma 8.2.2 (i)]{AG} for the case of a Euclidean Green domain $U$.
\end{proof}

\begin{prop}\label{prop4.8} For any $A\subset \overline U$ we have
$\widehat R{}_u^A=\widehat R{}_u^{A\cap \Delta(U)}+\widehat R{}_v^{A\cap U}$,
where $v:=u-\widehat R{}_u^{A\cap \Delta(U)}$.
\end{prop}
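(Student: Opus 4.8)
\emph{Proof plan.} The plan is to split $A=B\cup C$ with $B:=A\cap\Delta(U)$ and $C:=A\cap U$, to put $w:=\widehat R{}_u^B$, and to reduce the statement to the additivity, monotonicity, subadditivity and idempotence properties of sweeping collected in Proposition \ref{prop4.6}, together with the invariance statement of Proposition \ref{prop4.7}. By Proposition \ref{prop4.7} the function $w$ is invariant, hence of class $\cal S(U)$, and $w\preccurlyeq u$, so $v:=u-w$ is of class $\cal S(U)$ and part 4 of Proposition \ref{prop4.6}, applied to the decomposition $u=w+v$, gives
$$\widehat R{}_u^A=\widehat R{}_w^A+\widehat R{}_v^A.$$
Since $w=\widehat R{}_u^{A\cap\Delta(U)}$ and $C=A\cap U$, it now suffices to establish the two identities $\widehat R{}_w^A=w$ and $\widehat R{}_v^A=\widehat R{}_v^C$.

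For the first identity I would first record that $\widehat R{}_w^B=\widehat R{}_{\widehat R{}_u^B}^B=\widehat R{}_u^B=w$ by part 6 of Proposition \ref{prop4.6} with both sets taken equal to $B$ --- a legitimate choice, since the proof of part 6 in any event reduces to the situation where the two sets coincide. On the one hand, $w$ is itself admissible in the infimum defining $R{}_w^A$, since it dominates $w$ on $A\cap U$ and on $W\cap U$ for $W=\overline U\in\cal W(A)$, so $\widehat R{}_w^A\le R{}_w^A\le w$. On the other hand, $\widehat R{}_w^A\ge\widehat R{}_w^B=w$ by monotonicity (part 2 of Proposition \ref{prop4.6}). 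Hence $\widehat R{}_w^A=w$.

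For the second identity the crucial observation I would establish is that $\widehat R{}_v^B=0$. Indeed, applying part 4 of Proposition \ref{prop4.6} to the decomposition $u=v+w$ and the set $B$ gives $w=\widehat R{}_u^B=\widehat R{}_v^B+\widehat R{}_w^B=\widehat R{}_v^B+w$, so $\widehat R{}_v^B=0$ at every point where $w$ is finite. Since $w\in\cal S(U)$ is finite on a finely dense set and $\widehat R{}_v^B$ is finely continuous, it follows that $\widehat R{}_v^B\equiv0$. Then subadditivity and monotonicity (parts 1 and 2 of Proposition \ref{prop4.6}) give
$$\widehat R{}_v^C\le\widehat R{}_v^A=\widehat R{}_v^{B\cup C}\le\widehat R{}_v^B+\widehat R{}_v^C=\widehat R{}_v^C,$$
so $\widehat R{}_v^A=\widehat R{}_v^C$. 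Combining the two identities with the displayed decomposition of $\widehat R{}_u^A$ yields the assertion.

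The argument is therefore largely bookkeeping with the properties already established in Propositions \ref{prop4.6} and \ref{prop4.7}; the one step requiring a little care is the vanishing $\widehat R{}_v^B=0$, where one exploits that $w=\widehat R{}_u^{A\cap\Delta(U)}$ is a genuine function of class $\cal S(U)$ --- in particular finite on a finely dense set --- which is precisely what legitimizes cancelling $w$ in the relation $\widehat R{}_v^B+w=w$.
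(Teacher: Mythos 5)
Your proof is correct, but it takes a genuinely different route than the paper. The paper argues directly from the defining infimum: it starts with an arbitrary $s\in\cal S(U)$ admissible for $R{}_u^A$, observes that $\widehat R{}_u^{A\cap\Delta(U)}\preccurlyeq s$ (invariance via Proposition \ref{prop4.7}), subtracts to get $s-\widehat R{}_u^{A\cap\Delta(U)}\ge v$ on $A\cap U$, and concludes $s\ge\widehat R{}_u^{A\cap\Delta(U)}+\widehat R{}_v^{A\cap U}$; varying $s$ gives one inequality, and a second paragraph is devoted to the reverse. (In the paper as printed, both paragraphs actually conclude with the same inequality ``$\ge$'', which is evidently a typo, but the intended structure is two separate inequality arguments directly from Definition \ref{def4.5}.) Your argument instead never touches the defining infimum at all: you decompose $u=w+v$ with $w=\widehat R{}_u^{A\cap\Delta(U)}$, invoke additivity $\widehat R{}_u^A=\widehat R{}_w^A+\widehat R{}_v^A$ (Proposition \ref{prop4.6}.4), then evaluate the two pieces via the idempotence $\widehat R{}_w^B=w$ (Proposition \ref{prop4.6}.6 with $A=B$, plus $\widehat R{}_w^A\le w$ by admissibility of $w$ itself), and the vanishing $\widehat R{}_v^B=0$ extracted by cancelling $w$ q.e.\ in $w=\widehat R{}_v^B+w$, after which subadditivity and monotonicity collapse $\widehat R{}_v^A$ to $\widehat R{}_v^{A\cap U}$. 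This is a more modular derivation: it makes the result a formal consequence of Propositions \ref{prop4.6} and \ref{prop4.7} and separates cleanly into a single symmetric identity rather than two opposite inequality estimates. The one delicate point — cancelling $w$ in $\widehat R{}_v^B+w=w$ — you handle correctly by noting that $w\in\cal S(U)$ is finite q.e.\ and $\widehat R{}_v^B$ is finely continuous, so $\widehat R{}_v^B\equiv0$. The paper's approach is closer to the raw definition and thus more self-contained (it needs only Proposition \ref{prop4.7}), whereas yours trades that for brevity and conceptual clarity by leaning on the toolkit already built in Proposition \ref{prop4.6}; both are sound.
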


\begin{proof} Let $s\in\cal S(U)$, $s\ge u$ on $A\cap U$ and on a
neighborhood of $A\cap\Delta(U)$. Then
$s\ge\widehat R{}_u^{A\cap\Delta(U)}$, which is invariant by Proposition
\ref{prop4.7}, and so
$ \widehat R{}_u^{A\cap \Delta(U)} \preccurlyeq s$. Furthermore,
$s-\widehat R{}_u^{A\cap\Delta(U)}\ge u-\widehat R{}_u^{A\cap\Delta(U)}=v$ on $A\cap U$,
It follows that
$s- \widehat R{}_u^{A\cap \Delta(U)}\ge \widehat R{}_v^{A\cap U}$, and so
$s\ge  \widehat R{}_u^{A\cap \Delta(U)}+\widehat R{}_v^{A\cap U}$. This shows that
$\widehat R{}_u^A\ge  \widehat R{}_u^{A\cap \Delta(U)}+\widehat R{}_v^{A\cap U}$.
For the opposite inequality let $w\in S(U)$, $w\ge u$ on $A\cap U$ and on
a neighborhood of $A\cap\Delta(U)$.
Then $w\ge\widehat R{}_u^{A\cap\Delta(U)}$ on $A\cap U$, and since
$\widehat R{}_u^{A\cap \Delta(U)}$ is invariant as noted above, we have
$w-\widehat R{}_u^{A\cap\Delta(U)}\in\cal S(U)$. By hypothesis this function
majorizes $v$ on $A\cap U$, and we therefore get
$w\ge\widehat R{}_u^{A\cap \Delta(U)}+\widehat R{}_v^{A\cap U}$ . By varying $w$ this
leads to the remaining inequality
${\widehat R}{}_u^A\ge\widehat R{}_u^{A\cap \Delta(U)}+\widehat R{}_v^{A\cap U}$.
\end{proof}

For any (positive Radon) measure $\mu$ on $\overline U$ we write for brevity
$K\mu=\int_{\overline U}K(.,Y)d\mu(Y)$.
We say that a measure $\mu$ represents a function $u\in\cal S(U)$ if $u=K\mu$.

\begin{cor}\label{cor}
Let $H$ be a compact subset of $\overline U$ and $\mu$ a Radon measure on
$\overline U$ carried by $H$. Then $K\mu$ is invariant on $U\setminus H$.
\end{cor}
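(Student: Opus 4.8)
The plan is to use the decomposition $\overline U=U\cup\Delta(U)$ to split $\mu$ into an interior part and a boundary part, the first of which yields a fine Green potential and the second an invariant function, and then to add the two.

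First I would set $\mu'=\mu|_U$ and $\mu''=\mu|_{\Delta(U)}$. Since $\mu$ is carried by $H\subset\overline U$, this gives $\mu=\mu'+\mu''$ with $\mu'$ carried by $H\cap U$ and $\mu''$ carried by $H\cap\Delta(U)$, and hence $K\mu=K\mu'+K\mu''$. We may assume $K\mu\not\equiv+\infty$ (the contrary case being trivial), so that $K\mu'$ and $K\mu''$ are of class $\cal S(U)$. I would also note at the outset that $U\setminus H=U\cap(\overline U\setminus H)$ is finely open in $U$: by Lemma \ref{lemma4.2} the injection of $U$ into $\overline U$ is finely continuous, so the natural topology of $\overline U$ induces on $U$ a topology no finer than the fine one, and $\overline U\setminus H$ is naturally open.

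For the boundary part I would invoke the fact (from \cite{EF1}) that a Radon measure on $\overline U$ carried by $\Delta(U)$ represents an invariant function; thus $K\mu''$ is invariant on $U$, and hence, by \cite[Theorem 2.6]{EF1}, invariant on the finely open subset $U\setminus H$ as well. For the interior part, recall that $\overline U$ arises by injecting $U$ into the compact base $B=\{v\in\cal S(U):\Phi(v)=1\}$ of $\cal S(U)$, so that $K(.,Y)=G_U(.,Y)/\Phi(G_U(.,Y))$ for $Y\in U$, the denominator being finite and positive by Lemma \ref{lemma4.2}. Consequently $K\mu'=G_U\nu$, where $\nu$ is the measure on $U$ obtained from $\mu'$ by the density $Y\longmapsto 1/\Phi(G_U(.,Y))$; by Lemma \ref{lemma4.2}(c) we have $\nu(V_k)\le k\,\mu(H)<+\infty$ for every $k$, so $\nu$ is a Radon measure on $U$ carried by $H\cap U$. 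Now $G_U\nu$ is a fine Green potential, and as such it is finely harmonic off the (fine) support of $\nu$ (cf.\ \cite{F2}, \cite{F4}); since that support is contained in the natural closure of $H\cap U$ in $\overline U$, and hence in $H$ ($H$ being compact, so closed), the function $K\mu'=G_U\nu$ is finely harmonic, and in particular invariant, on $U\setminus H$. Finally, since the invariant functions on the finely open set $U\setminus H$ form a convex cone, $K\mu=K\mu'+K\mu''$ is invariant on $U\setminus H$, as claimed.

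The delicate step is the treatment of the interior part: one must upgrade ``$\nu$ is carried by $H\cap U$'' to ``the support of $\nu$ is contained in $H$'' — this is where it matters that $H$ is closed in $\overline U$ and that the natural topology of $\overline U$ restricts to at most the fine topology on $U$ — and then apply the fact that a fine Green potential is finely harmonic outside the support of its measure. The identity $K\mu'=G_U\nu$, the measurability of the density $1/\Phi(G_U(.,.))$, and the bookkeeping with $\mu=\mu'+\mu''$ are then routine.
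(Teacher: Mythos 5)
Your route is genuinely different from the paper's. The paper does not decompose the representing measure $\mu$ at all; instead it applies Proposition \ref{prop4.8} to the swept function, writing $\widehat R_{K\mu}^H=\widehat R_{K\mu}^{H\cap\Delta(U)}+\widehat R_v^{H\cap U}$ with $v=K\mu-\widehat R_{K\mu}^{H\cap\Delta(U)}$, and then invokes Proposition \ref{prop4.7} for the first term and \cite[Lemma 2.4]{EF1} for the second. Your interior-part argument is essentially correct and is precisely what Proposition \ref{prop2} of this paper furnishes: $U\setminus H$ is finely open, $\nu_.(U\setminus H)=0$ since $\nu$ is carried by $H\cap U$, hence $G_U\nu$ is invariant on $U\setminus H$. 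That is the clean tool here, rather than an appeal to fine harmonicity off a ``support'' of $\nu$; citing Proposition \ref{prop2} would make this step airtight and would spare you the bookkeeping with closures of $H\cap U$ that you flag as delicate.

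The step that actually needs care is the boundary one, not the interior one. You assert, with a citation to \cite{EF1}, that any Radon measure carried by $\Delta(U)$ represents an invariant function, and deduce that $K\mu''$ is invariant on all of $U$. But the characterization recalled in the introduction concerns the \emph{unique} representing measure $\mu_u$ carried by $U\cup\Delta_1(U)$: for such a measure, being carried by $\Delta(U)$ is the same as being carried by $\Delta_1(U)$. The hypothesis of the corollary places no restriction on $\mu$, so your $\mu''$ may charge $\Delta(U)\setminus\Delta_1(U)$, and the invariance of $K(.,Y)$ for a nonminimal Martin boundary point $Y$ is not among the facts this paper quotes. This is exactly what Proposition \ref{prop4.7} buys the paper: $\widehat R_u^A$ is invariant for \emph{every} $u\in\cal S(U)$ and \emph{every} $A\subset\Delta(U)$, with no hypothesis on how any representing measure sits over $\Delta(U)$. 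To close the gap you would either need to supply a reference establishing that $K(.,Y)$ is invariant for all $Y\in\Delta(U)$ (not just $Y\in\Delta_1(U)$), or handle the boundary part via Proposition \ref{prop4.7} as the paper does.
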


\begin{proof} By Proposition \ref{prop4.8} there is a function $v\in\cal S(U)$
such that ${\widehat R}{}_{K\mu}^H
={\widehat R}{}_{K\mu}^{H\cap\Delta(U)}+{\widehat R}{}_v^{H\cap U}$. By Proposition
\ref{prop4.7} the former term on the right is invariant, and the latter term
is invariant on $U\setminus H$ according to \cite[Lemma 2.4]{EF1}.
\end{proof}

\begin{cor}\label{cor4.11} Let $A\subset\overline U$ and $u\in\cal S(U)$.
Then ${\widehat R}{}_u^A$ from Definition
\ref{def4.5} is invariant on $U\setminus{\overline A}$.
\end{cor}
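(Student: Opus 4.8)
The plan is to reduce everything to the decomposition of Proposition \ref{prop4.8} together with the two cases already settled in Propositions \ref{prop4.7} and \ref{prop4.8}, proceeding exactly as in the proof of Corollary \ref{cor}. Write $A'=A\cap\Delta(U)$ and $A''=A\cap U$. By Proposition \ref{prop4.7} the function $\widehat R{}_u^{A'}$ is invariant and satisfies $\widehat R{}_u^{A'}\preccurlyeq u$, so $v:=u-\widehat R{}_u^{A'}$ belongs to $\cal S(U)$, and Proposition \ref{prop4.8} gives
$$\widehat R{}_u^A=\widehat R{}_u^{A'}+\widehat R{}_v^{A''}.$$
The first summand is invariant on all of $U$, hence is invariant on every finely open subset of $U$ (cf.\ \cite[Theorem 2.6]{EF1}); in particular it is invariant on $U\setminus\overline A$, once we know the latter is finely open. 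Thus the whole matter comes down to showing that the sweeping $\widehat R{}_v^{A''}$ onto the subset $A''$ of $U$ is invariant on $U\setminus\overline A$.

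The essential observation is that $U\setminus\overline A$ is a finely open subset of $U$ disjoint from the base $b(A'')$ of $A''$ in $U$. By Lemma \ref{lemma4.2}(a),(b) (and the continuity of $\Phi$), the canonical injection of $U$ into the Riesz-Martin space $\overline U$ is continuous from the fine topology on $U$ into the natural topology on $\overline U$; hence the trace on $U$ of any naturally closed subset of $\overline U$ — in particular of $\overline A$ — is finely closed in $U$, so that $U\setminus\overline A$ is finely open. Moreover, for $x\in U\setminus\overline A$ there is a naturally open set $N\subset\overline U$ with $x\in N$ and $N\cap\overline A=\emptyset$; then $N\cap U$ is a fine neighbourhood of $x$ in $U$ disjoint from $A''=A\cap U$, so $A''$ is thin at $x$ and $x\notin b(A'')$. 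Thus $U\setminus\overline A\subset U\setminus b(A'')$.

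Finally, \cite[Lemma 2.4]{EF1} — applied as in the proof of Corollary \ref{cor} — shows that $\widehat R{}_v^{A''}$ is invariant on the finely open set $U\setminus b(A'')$, hence on the smaller set $U\setminus\overline A$. Since a sum of two functions each invariant on the finely open set $U\setminus\overline A$ is again invariant there, the displayed identity yields that $\widehat R{}_u^A$ is invariant on $U\setminus\overline A$, as asserted. I expect the only point requiring real care to be the bookkeeping around the trace of the natural topology on $U$: one must verify that $\overline A\cap U$ is finely closed, i.e.\ that the Riesz-Martin embedding of $U$ is fine-to-natural continuous, which is exactly what Lemma \ref{lemma4.2} and the construction of $\overline U$ provide; everything else is a routine combination of Propositions \ref{prop4.7} and \ref{prop4.8} with \cite[Lemma 2.4]{EF1}. (Alternatively one could avoid Proposition \ref{prop4.8} and work directly from the expression $\widehat R{}_u^A=\widehat{\inf}_j\,\widehat R{}_u^{(A\cup W_j)\cap U}$ with the $W_j$ chosen, as in Remark \ref{remark2.4}, so that $\bigcap_j\overline W_j\subset\overline A$, then invoke \cite[Lemma 2.4]{EF1} and \cite[Theorem 2.6]{EF1} on a fine neighbourhood of each point of $U\setminus\overline A$.)
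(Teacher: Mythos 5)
Your proposal is correct and follows essentially the same route as the paper's own proof: decompose $\widehat R{}_u^A$ via Proposition \ref{prop4.8}, obtain invariance of the $\Delta(U)$-part on all of $U$ from Proposition \ref{prop4.7}, and invoke \cite[Lemma 2.4]{EF1} for the $U$-part. The paper states this in three lines; the extra bookkeeping you supply (fine openness of $U\setminus\overline A$ via the fine-to-natural continuity from Lemma \ref{lemma4.2}, and the inclusion $U\setminus\overline A\subset U\setminus b(A\cap U)$) is a correct and reasonable elaboration of the same argument rather than a different approach.
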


\begin{proof} For $A\subset U$ this follows from \cite[Lemma 2.4]{EF1},
and for $A\subset\Delta(U)$ it follows from Proposition \ref{prop4.7}.
For general $A\subset\overline U$ it therefore follows right away
by application of Proposition \ref{prop4.8}.
\end{proof}

For a (positive) Borel measure $\mu$ on $U$ we denote by
$\mu_.$ and $\mu^.$ the inner and the outer $\mu$-measure, respectively .

\begin{prop}\label{prop2} Let $p=G_U\mu$ be a fine potential on $U$
and let $V$ be a finely open subset of $U$. Then we have the bi-implications
$$
p_{|V}{\text{\rm{ invariant}}}\;\Longleftrightarrow\;\mu_.(V)=0
\;\Longleftrightarrow\;\mu^.(V)=0.
$$
\end{prop}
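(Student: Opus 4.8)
The plan is to prove the two equivalences separately. The equivalence $\mu_.(V)=0 \Longleftrightarrow \mu^.(V)=0$ should follow purely from measure-theoretic considerations once we know that $\mu$ does not charge polar sets. Indeed, $p=G_U\mu$ being a fine potential forces $\mu$ to be carried by $U$ and to give no mass to (finely) polar sets; in particular $\mu$ restricted to the finely Borel set $V$ is $\sigma$-finite and inner/outer regular in a way that lets one upgrade $\mu_.(V)=0$ to $\mu^.(V)=0$. Concretely, $V$ is finely open, hence finely Borel, so $\mu(V)$ is well defined and $\mu_.(V)=\mu(V)=\mu^.(V)$; the real content of this half is just recording that $\mu$ measures every finely open set. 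I would dispatch this in a couple of lines, citing the relevant measurability facts about fine potentials from \cite{F2}, \cite{F4}.

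For the substantive equivalence, $p_{|V}$ invariant $\Longleftrightarrow \mu(V)=0$, I would argue as follows. Write $\mu = \mu_V + \mu'$ where $\mu_V$ is the restriction of $\mu$ to $V$ and $\mu'$ the restriction to $U\setminus V$, and set $p_V = G_U\mu_V$, $p' = G_U\mu'$, so $p = p_V + p'$. The key structural fact is that $p'=G_U\mu'$, having its representing measure carried by $U\setminus V$, is invariant (in fact finely harmonic) on $V$: this is exactly the fine-potential analogue of the statement that a Green potential of a measure is harmonic off the closed support, and it is the natural analogue of \cite[Lemma 2.4]{EF1} used repeatedly above, applied to $G_U(\cdot,y)$ for $y\notin V$ and then integrated in $y$. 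Granting this, $p_{|V}$ is invariant if and only if $(p_V)_{|V}$ is invariant. So the whole matter reduces to: a fine potential $p_V=G_U\mu_V$ with $\mu_V$ carried by $V$ is invariant on $V$ if and only if $\mu_V=0$. The ``if'' direction is trivial. For ``only if'': if $(p_V)_{|V}$ were invariant, then since $p_V$ is a fine potential on $U$ its greatest invariant minorant is $0$, and one shows the invariant function $(p_V)_{|V}$ on $V$ is dominated by (the fine-potential) $p_V$ on all of $U$ in a way that forces it to vanish — more precisely, $p_V$ restricted to $V$ being both a fine potential's restriction and invariant there, combined with $p_V$ being a global fine potential on $U$ (so $\widehat R{}^{U\setminus V}_{p_V}$ is still a fine potential and equals $p_V$ off $V$), yields $p_V\equiv 0$, hence $\mu_V=0$ because $G_U$ is a kernel with the property that $G_U\nu=0\Rightarrow\nu=0$.

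I expect the main obstacle to be the ``only if'' step: turning ``invariant on the finely open set $V$'' into a global statement about the fine potential $p_V$ on $U$. The delicate point is that invariance is a local-to-global notion via \cite[Theorem 2.6]{EF1}, but here we only control behaviour on $V$, and we must use that $p_V$ is swept from $U\setminus V$ in the complementary region. The clean way is: since $\mu_V$ is carried by $V$, we have $p_V = \widehat R{}^{\,\widetilde V}_{p_V}$ (the potential of a measure on $V$ is its own réduite on the fine closure), and on $V$ itself $p_V$ agrees with the réduite $R{}^{\,\complement_U V}_{p_V}$-type expression only trivially; combining invariance of $(p_V)_{|V}$ with the fine-potential property should force the fine Riesz measure of $p_V$, which is $\mu_V$, to be zero. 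One must be careful that $V$ need not be relatively compact, so I would first reduce to $V\cap V_k$ with the $V_k$ of Lemma \ref{lemma4.2}, prove $\mu(V\cap V_k)=0$ for each $k$ using the Perron-family machinery of Theorem \ref{thm4.4} (the family $\{\widehat R{}^{U\setminus V_k}_{p}\}$ is a Perron family with infimum an invariant function $\le p$, hence $0$ since $p\in\cal P(U)$, forcing $p=\widehat R{}^{U\setminus V_k}_{p}$, whence $p$ carries no mass on $V_k$ when $p_{|V}$ is invariant), and then let $k\to\infty$.
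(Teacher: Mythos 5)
Your reduction $\mu=\mu_V+\mu'$, $p=p_V+p'$, with the observation that $p'=G_U\mu'$ (representing measure carried by $U\setminus V$) is invariant on $V$, is a legitimate and genuinely different decomposition from the one the paper uses. The ``if'' direction, $\mu(V)=0\Rightarrow p_{|V}$ invariant, then does follow. But there are two real problems.

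First, the reduction ``$\mu_.(V)=\mu(V)=\mu^.(V)$ because $V$ is finely open, hence finely Borel'' is not sound as stated: finely open sets are not Borel in general. What saves the conclusion is the quasi-Lindel\"of property (a finely open set differs from a $K_\sigma$ set by a polar set) together with the fact that the Riesz measure of a fine potential does not charge polar sets; you need to say this, and it is not the reasoning you give. The paper avoids the issue entirely by proving the three implications cyclically: $\mu_.(V)=0\Rightarrow p_{|V}$ invariant (using only inner measure), and $p_{|V}$ invariant $\Rightarrow\mu^.(V)=0$ (outer measure), and $\mu^.(V)=0\Rightarrow\mu_.(V)=0$ trivially.

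Second, and more seriously, your ``only if'' step has a genuine gap. The Perron-family argument you sketch does not use the hypothesis that $p_{|V}$ is invariant at all: for \emph{any} fine potential $p$ one has $\widehat\inf_k\widehat R{}_p^{U\setminus V_k}=0$, and this certainly does not ``force $p=\widehat R{}_p^{U\setminus V_k}$'' for any $k$ (that would contradict the infimum being $0$ unless $p\equiv0$). So the intended conclusion ``$p$ carries no mass on $V_k$ when $p_{|V}$ is invariant'' does not follow from anything written. The tool that actually makes the ``only if'' direction work, and which is absent from your proposal, is the localization formula from \cite[Lemma 2.6]{F4}: for $V$ finely open with $\widetilde V\subset U$,
$$p=G_V\mu+\widehat R{}_p^{U\setminus V}\quad\text{on }U,$$
where $G_V$ is the Green kernel of the fine domain $V$. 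Since $\widehat R{}_p^{U\setminus V}$ is invariant on $V$ by \cite[Lemma 2.4]{F4}, invariance of $p_{|V}$ forces $G_V\mu$ to be both invariant on $V$ and a fine potential on $V$, hence $G_V\mu=0$, whence $\mu(r(V))=0$ and $\mu^.(V)=0$. The general $V$ is then covered by finely open $W_j$ with $\widetilde W{}_j\subset U$. Your restriction $G_U\mu_V$ is not the same object as $G_V\mu$, and without the localization lemma you cannot pass from ``invariant on $V$'' to ``$\mu_V=0$''.
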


\begin{proof} Suppose first that $\mu_.(V)=0$. For any regular finely open
set $W$ with $\widetilde W\subset V$ the part of $\mu$ on the $K_\sigma$ set
$W$ equals $0$, and hence $G_W\mu\equiv0$. It follows by
\cite[Lemma 2.6]{F4} that
$$
p=\int p\,d\eps_x^{\Omega\setminus W}
$$
on $U$, in particular on $V$. Fix a point $x_0\in V$. The finely open sets
$W_j:=\{x\in V:G_U(x,x_0)>\frac1j\}$ form a countable cover of $V$ such
that $\widetilde W{}_j\subset V$. It therefore follows by
\cite[Theorem 4.4]{F4} (with $U$ replaced by $V$ and $s$ by $p_{|V}$) that
$p_{|V}$ indeed is invariant.

Conversely, suppose that $p_{V}$ is invariant, and let us prove that
$\mu^.(V)=0$. Under the extra hypothesis
that $\widetilde V\subset U$ it now follows by \cite[Lemma 2.6]{F4} that
$$
p=G_V\mu+\widehat R{}_p^{U\setminus V}
$$
on $U$. By \cite[Lemma 2.4]{F4} the latter term on the right is invariant
on $V$, and so is therefore the difference $p=G_V\mu$, which however is a
fine potential on $V$, and so $G_V\mu=0$ on $V$, that is, $\mu(r(V))=0$,
whence $\mu^.(V)=0$. Without the above extra hypothesis that
$\widetilde V\subset U$ we cover $V$ by a sequence of finely open sets
$$
W_j:=\{x\in V:G_{r(V)}(x,x_0)>\frac1j\}
\subset\{x\in r(V):G_{r(V)}(x,x_0)\ge\frac1j\}.
$$
The last set is finely closed subset of $U$, and so
$\widetilde W{}_j\subset U$. As shown above, it follows that
$\mu^.(W_j)=0$, and hence indeed $\mu^.(V)=0$.
\end{proof}

\begin{prop}\label{prop4.14} Let $A\subset\overline U$ and $u\in\cal S(U)$.
Then there exists a measure on $\overline U$ representing
$\widehat R{}_u^A$ and carried by $\overline A$.
\end{prop}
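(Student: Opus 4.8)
The plan is to split $A$ into its trace on $\Delta(U)$ and its trace on $U$, handle each separately using results already in hand, and glue the representing measures together. Concretely, by Proposition \ref{prop4.8} we have $\widehat R{}_u^A=\widehat R{}_u^{A\cap\Delta(U)}+\widehat R{}_v^{A\cap U}$, where $v:=u-\widehat R{}_u^{A\cap\Delta(U)}\in\cal S(U)$. So it suffices to produce, for each of the two terms, a representing measure carried by $\overline{A\cap\Delta(U)}\subset\overline A$, resp.\ $\overline{A\cap U}\subset\overline A$, and then add them.

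For the term $\widehat R{}_u^{A\cap\Delta(U)}$: this function is invariant by Proposition \ref{prop4.7}, and invariant functions are represented by measures carried by $\Delta(U)$ (this is recalled in the introduction, from \cite{EF1}). So there is a measure $\nu$ on $\overline U$ with $K\nu=\widehat R{}_u^{A\cap\Delta(U)}$ and $\nu$ carried by $\Delta(U)$. The issue is to show $\nu$ is carried by $\overline{A\cap\Delta(U)}$. Here I would use Remark \ref{remark2.4}: for a set $B\subset\Delta(U)$ there is a decreasing sequence of open sets $W_j\supset B$ with $\bigcap_j\overline W_j\subset\overline B$ and $\widehat R{}_u^B=\widehat\inf_j\widehat R{}_u^{W_j\cap U}$. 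Each $\widehat R{}_u^{W_j\cap U}$ is a reduction onto a subset of $U$, and by Corollary \ref{cor4.11} (or directly \cite[Lemma 2.4]{EF1}) it is invariant on $U\setminus\overline{W_j\cap U}$; passing to the Riesz-Martin representation, its representing measure restricted to $\Delta(U)$ should be carried by $\overline W_j$. Taking the decreasing limit and using continuity of the representation under decreasing limits (the natural-topology continuity of $u\mapsto\mu_u$ on the relevant cone, from \cite{EF1}) forces the limit measure to be carried by $\bigcap_j\overline W_j\subset\overline B=\overline{A\cap\Delta(U)}$.

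For the term $\widehat R{}_v^{A\cap U}$: now $A\cap U\subset U$, so this is an ordinary sweeping onto a subset of $U$, and $\widehat R{}_v^{A\cap U}(x)=\int v\,d\eps_x^{\complement(U\setminus(A\cap U))}=\int_U v\,d\eps_x^{\,\widetilde{A\cap U}}$ by \cite[Lemma 2.3]{EF1}; one sees directly that its Riesz (fine-potential) part is carried by $b(A\cap U)=\widetilde{A\cap U}\cap U\subset\overline A$, and its invariant part is handled exactly as in the previous paragraph, being represented by a measure on $\Delta(U)$ carried by the closure of $A\cap U$ in $\overline U$, hence by $\overline A$. I expect the main obstacle to be the bookkeeping in this paragraph and the previous one: namely, verifying carefully that "restriction to $U\setminus\overline{W_j\cap U}$ being invariant" translates into the representing measure being carried by $\overline{W_j}$ — this requires using the uniqueness of the Riesz-Martin representation (on $U\cup\Delta_1(U)$) together with the localization of invariant/potential parts, and then checking that the decreasing-sequence limit of measures is still carried by the intersection of the closures, which is where one leans on the measurability of the Riesz-Martin kernel and the (weak) continuity properties of $\mu\mapsto K\mu$ established in \cite{EF1}. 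Once the two representing measures are in place, their sum represents $\widehat R{}_u^A$ and is carried by $\overline A$, completing the proof.
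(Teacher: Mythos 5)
Your decomposition via Proposition \ref{prop4.8} and the use of Remark \ref{remark2.4} match the paper's structure, but the step you yourself flag as ``the main obstacle'' is a genuine gap, and you do not close it. The inference ``$\widehat R{}_u^{W_j\cap U}$ is invariant on $U\setminus\overline{W_j\cap U}$, hence its representing measure is carried by $\overline W_j$'' is not supported by anything you cite. Corollary \ref{cor} gives only the opposite implication (a measure carried by a compact $H$ yields a function invariant on $U\setminus H$); Proposition \ref{prop2} gives the converse you need, but only for \emph{fine potentials} $p=G_U\mu$ with base-measure $\mu$ on $U$. When $u$ is not dominated by a fine potential, $\widehat R{}_u^{W_j\cap U}$ typically has a nontrivial invariant part $h_j$ whose unique representing measure lives on $\Delta_1(U)$, and nothing in Propositions \ref{prop2}, \ref{prop4.7} or Corollary \ref{cor4.11} tells you where on $\Delta_1(U)$ that measure sits. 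The same lacuna recurs in your treatment of $\widehat R{}_v^{A\cap U}$ when you say the invariant part is ``handled exactly as in the previous paragraph.''

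The paper's proof is organized precisely to avoid this. It first treats the case $A\subset U$ by truncating: for a fine potential $p>0$ it sweeps $u\wedge kp$ instead of $u$, so that $\widehat R{}_{u\wedge kp}^A$ is itself a fine potential with \emph{no} invariant part. One may then pass from the $K$-representation to the $G_U$-representation via the factor $\Phi(G_U(\cdot,y))^{-1}$ and apply Proposition \ref{prop2} together with Corollary \ref{cor4.11} to conclude that the measure is carried by $\overline A$. Letting $k\to\infty$, one normalizes to probability measures $\nu_k$ on the compact base $B$, extracts a vaguely convergent subsequence, and uses that $\overline A$ is closed in $\overline U$ so the limit is still carried by $\overline A$; natural continuity of $\Phi$ and of barycenters then gives the representation of $\widehat R{}_u^A$. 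Only with this $A\subset U$ case in hand does the paper attack $A\subset\Delta(U)$ via Remark \ref{remark2.4} (applying the $A\subset U$ result to each $\widehat R{}_u^{W_j\cap U}$ and taking another weak limit), and finally the general case via Proposition \ref{prop4.8}. In short: the missing ingredient in your argument is the truncation by a fine potential, which is what converts the qualitative statement ``invariant off $\overline A$'' into the quantitative statement ``measure carried by $\overline A$.''
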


\begin{proof} We may suppose that $\widehat R_u^A\ne 0$, in particular $u>0$,
the case $\widehat R_u^A=0$  being trivial.  For any probability measure
$\nu$ on $B$ we denote in this proof by $b(\nu)$ the barycenter of $\nu$.

Suppose first that $A\subset U$. Let $p$ be a fine potential $>0$ on $U$.
For any natural number $k$ there exists a non-zero Radon measure
$\sigma_k$ on $\overline U$ representing
the fine potential $\widehat R_{u\wedge kp}^A>0$ on $U$, and $\sigma_k$ is
carried by $U$ according to \cite[Corollary 3.25]{EF1}.
In view of the first paragraph of \cite[Section 3]{EF1} we have
$$
\widehat R_{u\wedge kp}^A=K\sigma_k
=\int_UK(.,y)d\sigma_k(y)=\int_UG_U(.,y)d\tau_k(y)
$$
with $d\tau_k(y)=\Phi(G_U(.,y))^{-1}d\sigma_k(y)$.
Here we use that the finite non-zero function
$y\longmapsto\Phi(G_U(.,y))$ on $U$ is finely continuous by
Lemma \ref{lemma4.2} (b) and hence Borel measurable by \cite[Lemma 2.1]{EF1}.
Thus there is indeed a non-zero Borel measure $\tau_k$ on $U$ as stated.
By Corollary \ref{cor4.11} $\widehat R_{u\wedge kp}^A$ is invariant on
$U\setminus\overline A$,
and hence $\tau_k$ is carried by $\overline A$ according to Proposition
\ref{prop2}. It follows that $\sigma_k$ likewise is carried by $\overline A$.

Consider for each $k$ the probability measure $\nu_k$ on the chosen compact
base $B$ of the cone $\cal S(U)$, defined by
$\nu_k(E)=\sigma_k(E\cap U)/\sigma_k(U)$ for any Borel subset $E$ of
$\overline U$. Clearly, $\nu_k$ is carried by $\overline A$ along with
$\sigma_k$. The sequence
$(\nu_k)$ has a subsequence $(\nu_{k_j})$ which converges vaguely to a
probability measure $\nu$ on $\overline U$, necessarily carried by
$\overline A$. On the other hand,
${\widehat R}{}_{u\wedge kp}^A\to{\widehat R}{}_u^A$ pointwise and increasingly
for $k\to+\infty$. It follows by \cite[Theorem 2.10]{EF1} that
${\widehat R}{}_{u\wedge kp}^A\to{\widehat R}{}_u^A$
in the natural topology on $\cal S(U)$ as $k\to+\infty$, and hence
$\Phi({\widehat R}{}_{u\wedge kp}^A)\to\Phi({\widehat R}{}_u^A)\in\,]0,+\infty[$
because $\Phi$ is naturally continuous on $\cal S(U)$. Identifying as usual
$\nu_k$ and $\nu$ with probability measures on $B$ we infer that
$$\frac1{\Phi({\widehat R}{}_u^A)}{\widehat R}{}_u^A
=\lim_{j\to\infty}\frac1{\Phi({\widehat R}{}_{u\wedge k_jp}^A)}{\widehat R}{}_{u\wedge k_jp}^A
=\lim_{j\to\infty}b(\nu_{k_j})=b(\nu)=K\nu.$$
Hence ${\widehat R}{}_u^A= K\mu$, where $\mu:=\Phi({\widehat R}{}_u^A)\nu$
(now again considered as a measure on $\overline U$) is carried
by $\overline A$ along with $\nu$.

Next, let $A\subset\Delta(U)$.
According to Remark \ref{remark2.4} there is
a decreasing sequence of open sets $W_j$ (depending on $u$) such that
$A\subset\bigcap_jW_j\subset\bigcap_j{\overline W}_j\subset\overline A$ and
${\widehat R}{}_u^A={\widehat{\inf}}_j{\widehat R}{}_u^{W_j\cap U}=
\lim_j{\widehat R}{}_u^{W_j\cap U}$ (natural limit, again by
\cite[Theorem 2.10]{EF1}). There is a sequence of reals
$\alpha_j>0$ and a real $\alpha>0$ such that
$\alpha_j\widehat R_u^{W_j\cap U}\in B$ and $\alpha{\widehat R}{}_u^A\in B$.
The sequence $(\alpha_j)$  converges to $\alpha$ because the sequence
$({\widehat R}{}_u^{W_j\cap U})$ converges naturally to ${\widehat R}{}_u^A$. For
any index $j$ there exists, as shown in the preceding paragraph, a probability
measure $\mu_j$ on $B$ with the barycenter
$\alpha_j{\widehat R}{}_u^{W_j\cap U}$ such that $\mu_j$ (when viewed as a
measure on $\overline U$) is carried by ${\overline W}_j$.
After passing to a subsequence we may
suppose that $\mu_j$ converges to a probability measure $\mu$ on $B$ which
(again when viewed as a measure on $\overline U$) necessarily is carried by
$\bigcap_j{\overline W}_j\subset\overline A$.
The sequence $(b(\mu_j))=(\alpha_j{\widehat R}{}_u^{W_j})$ of barycenters
of the $\mu_j$ therefore converges to the barycenter $b(\mu)$ of $\mu$, whence
$K\mu=b(\mu)=\alpha{\widehat R}{}_u^A$, and $\widehat R_u^A$ is represented by
the measure $\frac{1}{\alpha}\mu$ carried by $\overline A$.

In the general case where just $A\subset \overline U$ we have by Proposition
\ref{prop4.8}
${\widehat R}{}_u^A={\widehat R}{}_u^{A\cap \Delta(U)}+{\widehat R}{}_v^{A\cap U}$,
where $v\in\cal S(U)$. As shown in the third paragraph of the present proof
there exists a measure $\mu_1$ on
$\overline U$ representing ${\widehat R}{}_v^{A\cap U}$ and carried by
$\overline A$. And as shown in the preceding paragraph there exists
a measure $\mu_2$ on $\overline U$ representing $\widehat R_u^{A\cap \Delta(U)}$
and likewise carried by $\overline A$. The measure $\mu=\mu_1+\mu_2$ therefore
represents ${\widehat R}{}_u^A$ and is carried by $\overline A$.
\end{proof}


\begin{prop}\label{prop4.16}
Let $A\subset\overline U$. Then

{\rm{(i)}} If $A\subset\Delta(U)$ then ${\widehat R}{}_p^A=0$ for any
$p\in\cal P(U)$.

{\rm{(ii)}} If $A\subset\Delta(U)$ and $Y\in U\cup\Delta_1(U)$ then either
${\widehat R}{}_{K(.,Y)}^A=0$ or ${\widehat R}{}_{K(.,Y)}^A=K(.,Y)$.
If moreover $Y\notin A$ then ${\widehat R}{}_{K(.,Y)}^A=0$.

{\rm{(iii)}} If $Y\in\Delta_1(U)\setminus\overline A$ then
${\widehat R}{}_{K(.,Y)}^A\ne K(.,Y)$.
\end{prop}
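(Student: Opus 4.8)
I plan to treat the four assertions in the order (i), the dichotomy in (ii), (iii), and lastly the ``moreover'' clause of (ii), which I shall deduce from (iii) together with the dichotomy. For (i): by Proposition~\ref{prop4.7} the function $\widehat R{}_p^A$ is invariant and satisfies $\widehat R{}_p^A\preccurlyeq p$; since $\cal P(U)$ is a band in $\cal S(U)$, hence solid for the specific order, we get $\widehat R{}_p^A\in\cal P(U)$, and being also invariant it lies in $\cal P(U)\cap\cal H_i(U)=\{0\}$. For the dichotomy in (ii): if $Y\in U$ then $K(\cdot,Y)$ is a positive multiple of $G_U(\cdot,Y)$, hence a fine potential, so $\widehat R{}_{K(\cdot,Y)}^A=0$ by (i) (and, as $Y\in U$ is then automatically disjoint from $A\subset\Delta(U)$, this case also settles the ``moreover'' clause). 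If $Y\in\Delta_1(U)$, set $h:=\widehat R{}_{K(\cdot,Y)}^A$; by Proposition~\ref{prop4.7}, $h$ is invariant and $h\preccurlyeq K(\cdot,Y)$, so $0\le h\le K(\cdot,Y)$, and minimality of $K(\cdot,Y)$ forces $h=cK(\cdot,Y)$ for some $c\in[0,1]$. Applying property~6 of Proposition~\ref{prop4.6} with $B=A$ and then property~3 gives $h=\widehat R{}_h^A=\widehat R{}_{cK(\cdot,Y)}^A=c\,\widehat R{}_{K(\cdot,Y)}^A=c^2K(\cdot,Y)$, whence $c^2=c$ (as $K(\cdot,Y)\ne0$), i.e.\ $c\in\{0,1\}$.

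For (iii), suppose for contradiction that $\widehat R{}_{K(\cdot,Y)}^A=K(\cdot,Y)$. By Proposition~\ref{prop4.14} there is a measure $\mu$ on $\overline U$, carried by $\overline A$, with $K\mu=K(\cdot,Y)$. Identifying $\overline U$ with its image in the chosen compact base $B$, one has $K(\cdot,Z)=Z$ for every $Z\in\overline U$, so $K\mu$ is the barycenter of $\mu$ and $\|\mu\|=\Phi(K\mu)=\Phi(K(\cdot,Y))=1$; thus $\mu$ is a probability measure on $B$ whose barycenter is the point $K(\cdot,Y)$. Since $Y\in\Delta_1(U)$, the function $K(\cdot,Y)$ is a minimal element of $\cal S(U)$, hence an extreme point of $B$, and an extreme point of a compact convex set is represented only by the Dirac measure at that point. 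Therefore $\mu=\eps_Y$, so $Y\in\overline A$, contradicting the hypothesis $Y\notin\overline A$; hence $\widehat R{}_{K(\cdot,Y)}^A\ne K(\cdot,Y)$.

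It remains to prove the ``moreover'' clause of (ii) in the case $Y\in\Delta_1(U)$. Since $\overline U$ is metrizable, $\{Y\}$ is a $G_\delta$, so $A\subset\Delta(U)\setminus\{Y\}=\bigcup_nA_n$ with $(A_n)$ increasing and $Y\notin\overline{A_n}$ for each $n$ (take $A_n=A\setminus O_n$ for a decreasing sequence of open sets $O_n$ with intersection $\{Y\}$). By (iii), $\widehat R{}_{K(\cdot,Y)}^{A_n}\ne K(\cdot,Y)$; since $A_n\subset\Delta(U)$, the dichotomy just proved forces $\widehat R{}_{K(\cdot,Y)}^{A_n}=0$. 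Proposition~\ref{prop4.7a}(a) then yields $\widehat R{}_{K(\cdot,Y)}^A=\sup_n\widehat R{}_{K(\cdot,Y)}^{A_n}=0$. The main obstacle is step (iii): one has to recognize the measure supplied by Proposition~\ref{prop4.14} as a (normalized) representing measure of $K(\cdot,Y)$ on the base $B$ and invoke the uniqueness of representing measures at extreme points of a compact convex set; a minor technical point is the exhaustion $A=\bigcup_nA_n$ with $Y\notin\overline{A_n}$, which rests on the $G_\delta$ property of points of $\overline U$.
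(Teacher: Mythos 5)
Your proof is correct and follows essentially the same route as the paper's, with a mild but sensible reorganization: the paper treats the ``moreover'' clause of (ii) before (iii), first reducing to $Y\notin\overline A$ by an argument that is in substance a copy of (iii), then passing to general $Y\notin A$ by exhaustion; you instead promote (iii) to a lemma and deduce the ``moreover'' clause from it, which avoids the duplication. Two further, minor differences: for (i) you make explicit the band argument (\,$\widehat R{}_p^A\preccurlyeq p$, $\cal P(U)$ solid, $\cal P(U)\cap\cal H_i(U)=\{0\}$\,) that the paper leaves implicit in the phrase ``is also a fine potential (along with $p$)''; and for the exhaustion you use Proposition~\ref{prop4.7a}(a) (increasing union) while the paper invokes Proposition~\ref{prop4.7a}(b) (countable subadditivity) --- either works. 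The identification $\|\mu\|=\Phi(K\mu)$ and the appeal to uniqueness of the representing measure at an extreme point of a metrizable compact base (Alfsen) are exactly the paper's ingredients.
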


\begin{proof} It follows from Proposition \ref{prop4.7} that
${\widehat R}{}_p^A$ and ${\widehat R}{}_{K(.,Y)}^A$
are invariant. This establishes (i) because ${\widehat R}{}_p^A$ is also
a fine potential (along with $p$).

For the former assertion (ii) we have
${\widehat R}{}_{K(.,Y)}^A\preccurlyeq K(.,Y)$,
again by Proposition \ref{prop4.7}, and since
$K(.,Y)$ is extreme there is a constant $c\ge 0$ such that
${\widehat R}{}_{K(.,Y)}^A=cK(.,Y)$. Hence it follows from 6.\ in Proposition
\ref{prop4.6} with $A=B$ that $c=0$ or $c=1$.

For the latter assertion (ii) suppose first that $Y\notin{\overline A}$
(the natural closure of $A$ in $\overline U$). Suppose by contradiction that
${\widehat R}{}_{K(.,Y)}^A=K(.,Y)$.
According to Proposition \ref{prop4.14} there exists a measure
$\lambda$ on $\overline U$ carried by $\overline A$ such that
\begin{eqnarray} {\widehat R}{}_{K(.,Y)}^A
=\int_{\overline U}K(.,Z)d\lambda(Z).
\end{eqnarray}
It follows that $K(.,Y)=\int_{\overline U}K(.,Z)d\lambda(Z),$
and so $\lambda$ is a probability measure.
Denote $\mu$ the probability measure on $B$ corresponding to $\lambda$ under
the identification of $B$ with $\{K(.,Z):Z\in\overline U\}$.
Then $\mu$ has the barycenter $K(.,Y)$. Since $K(.,Y)$ is an extreme point
of $B$ we infer by \cite[Corollary I.2.4, p.15]{Al} that $\mu=\eps_Y$, and hence
$Y\in\overline A$, which is contradictory. Thus
${\widehat R}{}_{K(.,Y)}^A\ne K(.,Y)$,
and consequently ${\widehat R}{}_{K(.,Y)}^A=0$  according to the former
assertion (ii) above.

It remains to consider the case where we just have $Y\notin A$.
In this case $A$ may be written as the union of an increasing
sequence of subsets $A_j$ of $A$ with
$Y\notin {\overline A_j}$ for any $j$. By Proposition
\ref{prop4.7a} (b) we then have
${\widehat R}{}_{K(.,Y)}^A\le \sum_j{\widehat R}{}_{K(.,Y)}^{A_j}=0$,
and hence ${\widehat R}{}_{K(.,Y)}^A=0$, as claimed.

For (iii), suppose by contradiction that ${\widehat R}{}_{K(.,Y)}^A=K(.,Y)$.
Again, there exists by Proposition \ref{prop4.14} a probability measure
$\lambda$ on $\overline U$ carried by $\overline A$ such that (2.2) holds,
and hence $Y\in\overline A$, which is contradictory.
\end{proof}

Actually, in Proposition \ref{prop4.16} (ii),
if $Y\in A$ and hence $Y\notin U$ then $Y\in\Delta_1(U)$, and it follows
that ${\widehat R}{}_{K(.,Y)}^A=K(.,Y)$, see Proposition \ref{prop4.18} below.

The following result extends 4.\ in Proposition $\ref{prop4.6}$ to infinite
sums.

\begin{prop}\label{prop4.17} Let $A\subset\overline U$. Let $(\mu_j)$ be a
sequence of measures on $\overline U$ such that $\sum_j\int d\mu_j<+\infty$,
and let $\mu=\sum_j\mu_j$. Then
$$
\widehat R_{K\mu}^A=\sum_j\widehat R{}_{K\mu_j}^A.
$$
\end{prop}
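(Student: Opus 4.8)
The plan is to first reduce to the case $A\subset\Delta(U)$ and $A\subset U$ separately, using the additivity decomposition already available. By Proposition~\ref{prop4.8} applied to each $K\mu_j$ and to $K\mu$, together with the fact (Corollary~\ref{cor}-type reasoning, or directly from the representation theory of \cite{EF1}) that the invariant part and the potential part of a measure split the decomposition $A=(A\cap\Delta(U))\sqcup(A\cap U)$, it suffices to treat the two extreme cases. In fact a cleaner route is to observe that $\widehat R{}_{K\mu}^A=\widehat R{}_{K\mu}^{A\cap\Delta(U)}+\widehat R{}_{v}^{A\cap U}$ where $v=K\mu-\widehat R{}_{K\mu}^{A\cap\Delta(U)}$, and then handle each summand; so I will first prove the two special cases and then assemble.

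Second, for $A\subset U$: here everything is classical fine potential theory on $U$. Each $K\mu_j$ is of class $\cal S(U)$, the partial sums $s_N:=\sum_{j\le N}K\mu_j$ increase to $K\mu$, and by countable additivity of the Green/Riesz--Martin representation the measure representing $s_N$ is $\sum_{j\le N}\mu_j$. Using property~5 of Proposition~\ref{prop4.6} (continuity of sweeping under increasing limits — more precisely Proposition~\ref{prop4.7a}(a) is the increasing-union statement for sets, but for increasing limits of \emph{functions} one uses that $\widehat R{}_{\cdot}^A$ is additive by property~4 and is "naturally continuous", i.e.\ commutes with suprema of increasing sequences in $\cal S(U)$, which follows from \cite[Theorem 2.10]{EF1} as invoked repeatedly in the proof of Proposition~\ref{prop4.14}), one gets $\widehat R{}_{K\mu}^A=\sup_N\widehat R{}_{s_N}^A=\sup_N\sum_{j\le N}\widehat R{}_{K\mu_j}^A=\sum_j\widehat R{}_{K\mu_j}^A$. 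The hypothesis $\sum_j\int d\mu_j<\infty$ guarantees $K\mu\in\cal S(U)$ (it need not be finite, but it is finitely superharmonic q.e., which is all that is needed).

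Third, for $A\subset\Delta(U)$: invoke Remark~\ref{remark2.4} to get, for the function $K\mu$, a decreasing sequence of open sets $W_\ell\supset A$ with $\bigcap_\ell\overline W_\ell\subset\overline A$ and $\widehat R{}_{K\mu}^A=\lim_\ell\widehat R{}_{K\mu}^{W_\ell\cap U}$ in the natural topology. On each set $W_\ell\cap U\subset U$ the previous case (the $A\subset U$ case) gives $\widehat R{}_{K\mu}^{W_\ell\cap U}=\sum_j\widehat R{}_{K\mu_j}^{W_\ell\cap U}$. Now pass to the limit in $\ell$: the left side tends to $\widehat R{}_{K\mu}^A$; for the right side, each term $\widehat R{}_{K\mu_j}^{W_\ell\cap U}$ decreases in $\ell$ (as $W_\ell$ decreases, by property~2) with limit $\ge\widehat R{}_{K\mu_j}^A$, and one must show equality and that the sum commutes with the limit. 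The subtlety is that the $W_\ell$ furnished by Remark~\ref{remark2.4} depend on the function, hence a priori differ for $K\mu$ and for each $K\mu_j$; one fixes this by taking a common refinement — intersect the $W_\ell$ for $K\mu$ with those for each $K\mu_j$ along a diagonal — which is legitimate since a decreasing sequence of members of $\cal W(A)$ with intersection of closures inside $\overline A$ is still "large enough" for \emph{every} function (the fundamental convergence theorem plus quasi-Lindelöf used in the paragraph after Definition~\ref{def4.5} only shrinks further). With such a common sequence, $\widehat R{}_{K\mu_j}^{W_\ell\cap U}\downarrow\widehat R{}_{K\mu_j}^A$ and $\widehat R{}_{K\mu}^{W_\ell\cap U}\downarrow\widehat R{}_{K\mu}^A$ simultaneously; since all the functions in sight are invariant (Proposition~\ref{prop4.7}) and the equality $\widehat R{}_{K\mu}^{W_\ell\cap U}=\sum_j\widehat R{}_{K\mu_j}^{W_\ell\cap U}$ holds for each $\ell$, applying the natural-topology limit and monotone convergence for the sum yields $\widehat R{}_{K\mu}^A=\sum_j\widehat R{}_{K\mu_j}^A$.

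The main obstacle I anticipate is precisely this interchange of the infinite sum $\sum_j$ with the decreasing limit over $W_\ell$ in the $\Delta(U)$ case, complicated by the function-dependence of the sets in Remark~\ref{remark2.4}. I expect the cleanest fix is: fix once and for all a decreasing sequence $(W_\ell)\subset\cal W(A)$ with $\bigcap_\ell\overline W_\ell\subset\overline A$ that works for $K\mu$ (this automatically works for every $K\mu_j$ since $K\mu_j\le K\mu$ and any sequence that is "fine enough" for a larger function is fine enough for a smaller one, reduction being monotone), observe $\widehat R{}_{K\mu_j}^{W_\ell\cap U}\searrow\widehat R{}_{K\mu_j}^A$ by Definition~\ref{def4.5}, and then the identity reduces to term-by-term passage to the limit plus Fatou/monotone convergence applied to the series of nonnegative invariant functions $\big(\widehat R{}_{K\mu_j}^{W_\ell\cap U}\big)_j$, using $\Phi$ and naturally-continuous functionals exactly as in the proof of Proposition~\ref{prop4.14} to upgrade pointwise convergence to convergence in the natural topology.
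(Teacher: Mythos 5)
Your proof is correct and takes a genuinely different route from the paper's for both special cases. For $A\subset U$ the paper applies Fubini to the balayage integral $R_{K\mu}^A(x)=\int K\mu\,d\eps_x^{A\cup(\Omega\setminus U)}$ and regularizes, obtaining the identity at once; you instead realize $K\mu$ as the increasing limit of partial sums $s_N=\sum_{j\le N}K\mu_j$ and appeal to increasing-limit continuity of sweeping, which for $A\subset U$ does hold --- though the right citation is \cite[Theorem 11.12]{F1}, as the paper itself records in Remark~\ref{remark4.19}, not \cite[Theorem 2.10]{EF1}, and Property~5 of Proposition~\ref{prop4.6} concerns \emph{decreasing} limits. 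For $A\subset\Delta(U)$, where increasing-limit continuity fails, the paper avoids function-dependent exhaustions altogether: under the temporary hypothesis $\widehat R{}_{K\mu}^A=K\mu$ it telescopes via $\widehat R{}_{K\mu}^A=\sum_{j\le k}\widehat R{}_{K\mu_j}^A+\widehat R{}_{K(\sum_{j>k}\mu_j)}^A$, estimating the remainder by $\sum_{j>k}K\mu_j\searrow0$ q.e., and then removes the hypothesis via the specific-order decomposition $K\mu=\widehat R{}_{K\mu}^A+K\nu$ with $\widehat R{}_{K\nu}^A=0$. Your route instead builds a single decreasing sequence $(W_\ell)\subset\cal W(A)$ good for all the $K\mu_j$ simultaneously and interchanges $\sum_j$ with $\lim_\ell$ by dominated convergence (legitimate because $\sum_j\widehat R{}_{K\mu_j}^{W_1\cap U}\le K\mu<\infty$ q.e.\ is a summable dominant). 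Your diagonal-refinement construction of the common sequence is sound. Your alternative claim that a sequence working for $K\mu$ automatically works for each $K\mu_j$ because ``reduction is monotone'' is not an adequate justification as stated; the correct reason is that $K\mu_j\preccurlyeq K\mu$, so by additivity $\widehat R{}_{K\mu}^{W_\ell\cap U}=\widehat R{}_{K\mu_j}^{W_\ell\cap U}+\widehat R{}_{K(\mu-\mu_j)}^{W_\ell\cap U}$, and since the left side decreases q.e.\ to the finite sum $\widehat R{}_{K\mu_j}^A+\widehat R{}_{K(\mu-\mu_j)}^A$ while each right-hand term decreases to something $\ge$ its swept function on $A$, both must attain those limits. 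The assembly of the general case from Proposition~\ref{prop4.8} is the same as the paper's. In sum, you trade the paper's clever specific-order telescoping for an exhaustion-plus-dominated-convergence argument; both work, yours being arguably more mechanical at the cost of the common-sequence bookkeeping.
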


\begin{proof} If $A\subset U$ we have indeed by \cite[Lemma 2.3]{EF1}
for $x\in U$
\begin{eqnarray*}R_{K\mu}^A(x)\!\!\!
&=&\!\!\!\int K\mu\,d\eps_x^{A\cup(\Omega\setminus U)}
=\int\Bigl(\int K(.,Y)\,d\eps_x^{A\cup(\Omega\setminus U)}\Bigr)d\mu(Y)\\
&=&\!\!\!\sum_j\int\Bigl(\int K(.,Y)\,d\eps_x^{A\cup(\Omega\setminus U)}\Bigr)d\mu_j(Y)
=\sum_j\int K\mu_j\,d\eps_x^{A\cup(\Omega\setminus U)}\\
&=&\!\!\!\sum_jR_{K\mu_j}^A(x),
\end{eqnarray*}
the applications of Fubini's theorem being justified by the conclusion of
\cite[Remark 3.4]{EF1}.
It only remains to perform the l.s.c.\ regularization of both members of
the resulting equation.

Next, let $A\subset\Delta(U)$. The inequality `$\ge$' being trivial we may
suppose that the finely hyper\-harmonic function given by the right hand side
of the asserted equation is of class $\cal S(U)$, viz.\ $\not\equiv+\infty$.
Suppose first that $\widehat R{}_{K\mu}^A=K\mu$. For integers $k>0$ we have by
4. in Proposition $\ref{prop4.6}$ (extended to sums of finitely many measures)
$$
\widehat R{}_{K\mu}^A
=\sum_{j\le k}\widehat R_{K\mu_j}^A+\widehat R{}_{K(\sum_{j>k}\mu_j)}^A.
$$
Since $\sum_jK\mu_j=K\mu<+\infty$ q.e.\ we have
$$
\widehat R{}_{K(\sum_{j>k}\mu_j)}^A\le K\sum_{j>k}\mu_j=\sum_{j>k}K\mu_j\searrow0
$$
q.e.\ as $k\to\infty$. We thus have
$\widehat R{}_{K\mu}^A\ge\sum_{j>0}\widehat R_{K\mu_j}^A$ with equality q.e.,
and indeed everywhere, both members of the inequality being of class
$\cal S(U)$.

Without the temporary hypothesis $\widehat R{}_{K\mu}^A=K\mu$ we have
$\widehat R{}_{K\mu}^A\preccurlyeq K\mu$ because $\widehat R{}_{K\mu}^A$ is
invariant according to Proposition \ref{prop4.7}. Thus there exists a
measure $\nu$ on $\overline U$ with $\nu\le\mu$ and hence
$K\nu\in\cal S(U)$ such that $K\mu=\widehat R_{K\mu}^A+K\nu$. After
sweeping on $A$ while invoking 6.\ in Proposition \ref{prop4.6} we obtain
$\widehat R_{K\nu}^A=0$. Thus
$K(\mu-\nu)=\widehat R{}_{K(\mu-\nu)}^A=\widehat R{}_{K\mu}^A$ .
Similarly, $K\mu_j=\widehat R_{K\mu_j}^A+K\nu_j$ with $\nu_j\le\mu_j$ and
$K(\mu_j-\nu_j)=\widehat R{}_{K\mu_j}^A$. As shown above it follows that
$\widehat R{}_{K(\mu-\nu)}^A =\sum_j\widehat R{}_{K(\mu_j-\nu_j)}^A$ and hence
$$
\widehat R{}_{K\mu}^A=\widehat R{}_{K\nu}^A+\sum_j\widehat R{}_{K\mu_j}^A
=\sum_j\widehat R{}_{K\mu_j}^A
$$
because $\widehat R{}_{K\nu}^A=0$.
The general case $A\subset\overline U$ follows immediately by application
of Proposition \ref{prop4.8}.
\end{proof}

\begin{remark}\label{remark4.19} Even in the classical case $U=\Omega$,
sweeping (and reduction)
of a function $u\in\cal S(U)$ on an arbitrary set $A\subset\Delta(U)$ lacks
the following two properties, valid when $A\subset U$.
Fix a point $Y\in A\cap\Delta_1(U)$ and note that
${\widehat R}{}_{K(.,Y)}^A=K(.,Y)$ by Proposition \ref{prop4.18} below.
In the classical case, $K(.,Y)\wedge c$ is a potential
(hence a fine potential) for any constant $c>0$, as noted
in \cite[Observation, p.\ 74]{Do} for the purpose of showing that the
following Property 1.\ fails when $A=\Delta(U)$:

1. For any increasing sequence of functions $u_j\in\cal S(U)$ with pointwise
supremum $u\in\cal S(U)$, we should have
${\widehat R}{}_u^A=\sup_j{\widehat R}{}_{u_j}^A.$
This holds when $A\subset U$, by \cite[Theorem 11.12]{F1}, but fails
(classically) for $A=\Delta(U)$ and $u_j=K(.,Y)\wedge j$ in view of the above.
It does hold, however, for any sequence
$(u_j)\subset\cal S(U)$ which is increasing in the specific order;
this is a reformulation of Proposition \ref{prop4.17}
above.

2. For any $x\in U$, the affine function $u\longmapsto{\widehat R}{}_u^A(x)$
on $\cal S(U)$ should be (naturally) l.s.c.
For the proof that this holds for $A\subset U$ we may assume that $A$ is a
base relative to $U$, and hence $\widehat R{}_u^A=R_u^A$ and $\eps_x^A$ is
carried by $A$ for any $u\in\cal S(U)$. Consider a sequence of functions
$u_j\in\cal S(U)$ converging (naturally) to $u\in\cal S(U)$. Then
\begin{eqnarray*} R_u^A(x)\!\!\!
&=&\!\!\!\int_Uu\,d\eps_x^{A\cup(\Omega\setminus U)}
=\int_U\underset{j}{\lim{\widehat\inf}}\,u_j\,d\eps_x^{A\cup(\Omega\setminus U)}\\
&\le&\!\!\!\int_U\underset{j}{\lim\inf}\,u_j\,d\eps_x^{A\cup(\Omega\setminus U)}
\le\underset{j}{\lim\inf}\,\int_Uu_j\,d\eps_x^{A\cup(\Omega\setminus U)}
=\underset{j}{\lim\inf}\,R_{u_j}^A(x)
\end{eqnarray*}
by \cite[Lemmas 2.1 and 2.3, and Theorem 2.10]{EF1}, using Fatou's lemma; and it only remains to regularize.
But Property 2.\ fails (classically) for $A=\Delta(U)$ and
$u_j=K(.,Y)\wedge j$, hence $u=K(.,Y)$, in view of the above.
\end{remark}

\section{Minimal thinness and the minimal-fine topology}\label{sec3}

The following lemma extends \cite[Lemma 4.2]{EF1}, in which $E\subset U$.

\begin{lemma}\label{lemma5.1}
For any set $E\subset\overline U$ and any point \,$Y\in\Delta_1(U)$ we have
${\widehat R}{}_{K(.,Y)}^E\ne K(.,Y)$ if and only if
$\widehat R{}_{K(.,Y)}^E\in{\cal P}(U)$ (the fine potentials on $U$).
 \end{lemma}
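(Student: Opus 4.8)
The plan is to reduce the case of a general set $E\subset\overline U$ to the two cases that are already understood: $E\subset U$, which is exactly \cite[Lemma 4.2]{EF1} quoted just above the present statement, and $E\subset\Delta(U)$, which is covered by Proposition \ref{prop4.16}. The bridge between them is the additive splitting of the sweeping in Proposition \ref{prop4.8}.

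Concretely, I would put $E'=E\cap U$ and $E''=E\cap\Delta(U)$ and apply Proposition \ref{prop4.8} with $u=K(.,Y)$ and $A=E$, obtaining $\widehat R{}_{K(.,Y)}^E=\widehat R{}_{K(.,Y)}^{E''}+\widehat R{}_v^{E'}$ where $v:=K(.,Y)-\widehat R{}_{K(.,Y)}^{E''}\in\cal S(U)$. Since $E''\subset\Delta(U)$ and $Y\in\Delta_1(U)\subset U\cup\Delta_1(U)$, Proposition \ref{prop4.16}(ii) applies and gives the dichotomy $\widehat R{}_{K(.,Y)}^{E''}=0$ or $\widehat R{}_{K(.,Y)}^{E''}=K(.,Y)$.

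In the subcase $\widehat R{}_{K(.,Y)}^{E''}=0$ we have $v=K(.,Y)$, so Proposition \ref{prop4.8} collapses to $\widehat R{}_{K(.,Y)}^E=\widehat R{}_{K(.,Y)}^{E'}$ with $E'\subset U$, and the asserted equivalence for $E$ is then precisely \cite[Lemma 4.2]{EF1} applied to $E'$. In the subcase $\widehat R{}_{K(.,Y)}^{E''}=K(.,Y)$ we get $v=0$, hence $\widehat R{}_v^{E'}=0$ and $\widehat R{}_{K(.,Y)}^E=K(.,Y)$; here both sides of the desired biconditional are false, since $K(.,Y)$ is a nonzero invariant function (it lies in the base $B$, so $\Phi(K(.,Y))=1$, and it is represented by a measure carried by $\Delta(U)$) and therefore does not belong to the band $\cal P(U)$, which is orthogonal to ${\cal H_i}(U)$. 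In either subcase the claimed equivalence holds, so this would complete the proof.

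I do not expect a serious obstacle: the work is bookkeeping with Proposition \ref{prop4.8} together with the two quoted results, the only points needing a little care being that the alternative from Proposition \ref{prop4.16}(ii) is genuinely exhaustive and that $K(.,Y)$ is not a fine potential for $Y\in\Delta_1(U)$. For completeness I note a self-contained variant that avoids \cite[Lemma 4.2]{EF1}: write the Riesz decomposition $\widehat R{}_{K(.,Y)}^E=p+h$ with $p\in\cal P(U)$, $h\in{\cal H_i}(U)$; from $h\le\widehat R{}_{K(.,Y)}^E\le K(.,Y)$ and \cite[Lemma 2.2]{EF1} one gets $h\preccurlyeq K(.,Y)$, hence $h=cK(.,Y)$ with $c\in[0,1]$ by extremality of $K(.,Y)$ in $B$; applying 3., 4.\ and 6.\ of Proposition \ref{prop4.6} together with Proposition \ref{prop4.16}(i) (and Proposition \ref{prop4.8}, to see that $\widehat R{}_p^E\in\cal P(U)$) yields $\widehat R{}_p^E=(1-c)(p+cK(.,Y))$; comparing invariant parts forces $c(1-c)=0$, so $\widehat R{}_{K(.,Y)}^E$ equals $p\in\cal P(U)$ when $c=0$ and equals $K(.,Y)$ when $c=1$ (which in turn forces $p=0$), which is again the assertion.
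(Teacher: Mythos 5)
Your main argument is correct and takes a genuinely different route from the paper. You factor the general $E\subset\overline U$ through the boundary/interior splitting of Proposition~\ref{prop4.8}, then dispose of each piece by quoting already-established results: the dichotomy of Proposition~\ref{prop4.16}(ii) for $E''=E\cap\Delta(U)$, and \cite[Lemma 4.2]{EF1} for $E'=E\cap U$. In the case $\widehat R{}_{K(.,Y)}^{E''}=0$ the splitting collapses to $\widehat R{}_{K(.,Y)}^E=\widehat R{}_{K(.,Y)}^{E'}$ and the claim becomes exactly \cite[Lemma 4.2]{EF1}; in the case $\widehat R{}_{K(.,Y)}^{E''}=K(.,Y)$ you get $\widehat R{}_{K(.,Y)}^E=K(.,Y)$, a nonzero invariant function, so both sides of the biconditional are false. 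All the citations are legitimate (Propositions~\ref{prop4.8} and~\ref{prop4.16} appear earlier, in Section~\ref{sec2}). The paper's proof instead argues directly from the Riesz decomposition $\widehat R{}_{K(.,Y)}^E=p+h$: it shows $h\preccurlyeq K(.,Y)$ hence $h=\alpha K(.,Y)$, uses 6.\ and 4.\ of Proposition~\ref{prop4.6} to get $\widehat R{}_p^E=p$ and $\widehat R{}_h^E=h$, and rules out $0<\alpha<1$ by deriving that a nonzero function would be simultaneously a fine potential and invariant. Your ``self-contained variant'' is essentially this same Riesz-decomposition argument, merely reorganized so that the dichotomy $c\in\{0,1\}$ is derived first and the biconditional is then checked case by case rather than assuming $\widehat R{}_{K(.,Y)}^E\ne K(.,Y)$ at the outset. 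What your main approach buys is modularity --- it treats \cite[Lemma 4.2]{EF1} and Proposition~\ref{prop4.16}(ii) as black boxes and avoids re-deriving their content; what the paper's approach buys is that the extension of \cite[Lemma 4.2]{EF1} is proved from first principles (Riesz decomposition plus the algebra of reductions) without leaning on that earlier lemma at all.
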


\begin{proof} If $\widehat R{}_{K(.,Y)}^E$ is a fine potential then
$\widehat R{}_{K(.,Y)}^E\ne K(.,Y)$ because $K(.,Y)$ is invariant. Conversely,
suppose that $\widehat R{}_{K(.,Y)}^E\ne K(.,Y)$, and write
$\widehat R{}_{K(.,Y)}^E=p+h$ with $p$ a fine potential and $h$ invariant.
Then $h\le\widehat R{}_{K(.,Y)}^E\le K(.,Y)$ and hence by \cite[Lemma 2.2]{EF1}
$h\preccurlyeq K(.,Y)$,
which shows that $h=\alpha K(.,Y)$ for some $\alpha\in[0,1]$. Here
$\alpha\ne1$, for otherwise $(h=)$ $\widehat R{}_{K(.,Y)}^E=K(.,Y)$ contrary to
hypothesis. On the other hand it follows by 6.\ (with $A=B$) and 4.\ in
Proposition \ref{prop4.6} that
$$\widehat R{}_{K(.,Y)}^E=\widehat R{}_{\widehat R{}_{K(.,Y)}^E}^E=\widehat R{}_{p+h}^E
=\widehat R{}_p^E+\widehat R{}_h^E=p+h,$$
whence $\widehat R{}_p^E=p$ and $\widehat R{}_h^E=h$. If $h\ne0$ then
$\alpha\ne0$ because $h=\alpha K(.,Y)$. Since
$h=\widehat R{}_h^E=\alpha\widehat R{}_{K(.,Y)}^E=\alpha p+\alpha h$
we would obtain $(1-\alpha)h=\alpha p$ with $0<\alpha<1$, which is impossible.
Thus actually $h=0$, and so indeed $\widehat R{}_{K(.,Y)}^E=p\in{\cal P}(U)$.
\end{proof}

\begin{definition}\label{def5.2}  A set $E\subset U$ is said to be
minimal-thin at a point $Y\in\Delta_1(U)$ if
$\widehat R^E_{K(.,Y)}\ne K(.,Y)$, or equivalently if $R^E_{K(.,Y)}\ne K(.,Y)$,
that is (by the preceding lemma) if $\widehat R{}_{K(.,Y)}^E\in\cal P(U)$.
\end{definition}

\begin{cor}\label{cor5.3} For any $Y\in\Delta_1(U)$ the sets $E\subset U$
which are minimal-thin at $Y$ form a filter $\cal F(Y)$ on $U$.
 \end{cor}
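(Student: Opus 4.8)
The plan is to verify directly that $\cal F(Y)$ is a filter, where (as in \cite{EF1}) a subset $F\subset U$ is declared to belong to $\cal F(Y)$ exactly when $U\setminus F$ is minimal-thin at $Y$. Concretely this amounts to showing three things about the family of subsets of $U$ that are minimal-thin at $Y$: it contains $\emptyset$ but not $U$, it is stable under passage to subsets, and it is stable under finite unions. Throughout I would use Definition \ref{def5.2} in the form supplied by Lemma \ref{lemma5.1}, i.e. read "$E$ minimal-thin at $Y$" as "$\widehat R{}_{K(.,Y)}^E\in\cal P(U)$".

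First I would record the hereditary property of fine potentials: if $w\in\cal S(U)$ satisfies $w\le p$ for some $p\in\cal P(U)$, then $w\in\cal P(U)$, since any finely subharmonic minorant of $w$ is also one of $p$ and hence $\le0$ (this is also contained in the statement that $\cal P(U)$ is a band). Combined with the monotonicity of sweeping in the set (Proposition \ref{prop4.6}, part 2), this immediately gives stability under subsets: if $E'\subset E$ and $E$ is minimal-thin at $Y$, then $\widehat R{}_{K(.,Y)}^{E'}\le\widehat R{}_{K(.,Y)}^{E}\in\cal P(U)$, so $E'$ is minimal-thin at $Y$. For finite unions it is enough to treat two sets $E_1,E_2$; by the subadditivity of sweeping (Proposition \ref{prop4.6}, part 1),
$$\widehat R{}_{K(.,Y)}^{E_1\cup E_2}\le\widehat R{}_{K(.,Y)}^{E_1}+\widehat R{}_{K(.,Y)}^{E_2}\in\cal P(U),$$
$\cal P(U)$ being a convex cone, and again the hereditary property yields $\widehat R{}_{K(.,Y)}^{E_1\cup E_2}\in\cal P(U)$, i.e. $E_1\cup E_2$ is minimal-thin at $Y$.

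Finally, $\emptyset$ is minimal-thin at $Y$ since $\widehat R{}_{K(.,Y)}^{\emptyset}=0\ne K(.,Y)$ (the function $K(.,Y)$ being a nonzero, indeed minimal, element of $\cal S(U)$), while $U$ itself is not minimal-thin at $Y$ because $\widehat R{}_{K(.,Y)}^U=K(.,Y)$ (reduction on the whole space). Together these are precisely the filter axioms for $\cal F(Y)=\{F\subset U:U\setminus F\text{ minimal-thin at }Y\}$. There is no real obstacle here beyond correctly translating between the two equivalent descriptions of minimal thinness via Lemma \ref{lemma5.1} and invoking the already-established monotonicity and subadditivity of sweeping from Proposition \ref{prop4.6}; the corollary is essentially a formal consequence of those results.
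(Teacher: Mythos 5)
Your proposal is correct and takes essentially the same route as the paper. The paper's one-line proof only addresses the non-trivial filter axiom (closure of $\cal F(Y)$ under finite intersections, equivalently closure of minimal-thin sets under finite unions) by appealing to Lemma \ref{lemma5.1} together with the subadditivity of sweeping and the fact that $\cal P(U)$ is a hereditary convex cone — exactly the combination you used — while you spell out the remaining, routine, axioms ($U\in\cal F(Y)$, $\emptyset\notin\cal F(Y)$, closure under supersets) explicitly.
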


This follows from Lemma \ref{lemma5.1} which easily implies that for any
$E_1,E_2\subset U$ such that ${\widehat R}{}_{K(.,Y)}^{U\setminus E_i}\ne K(.,Y)$
for $i=1,2$, we have ${\widehat R}{}_{K(.,Y)}^{U\setminus(E_1\cup E_2)}\ne K(.,Y)$.

Like in classical potential theory we define the minimal-fine (mf) topology
on $\overline U$ as follows:

\begin{definition}\label{def5.4} A set $W\subset \overline U$ is said
to be a minimal-fine neighborhood of a point $Y\in\overline U$ if

(a) $W\cap U$ is a fine neighborhood of $Y$ in the usual sense, in case
$Y\in U$,

(b) $W$ contains the point $Y$ and $U\setminus W$ is minimal-thin at $Y$, in
case $Y\in\Delta_1(U)$,

(c) $W$ contains the point $Y$, in case $Y\in\Delta(U)\setminus\Delta_1(U)$.
\end{definition}

In the sequel we will denote by $\mflim$ and $\mfliminf$ the limit and
the $\liminf$ in the sense of the mf-topology.

According to (a) above, the mf-topology on $\overline U$ induces on
$U$ the fine topology there, and $U$ is mf-open in $\overline U$, that is,
$\Delta(U)$ is mf-closed in $\overline U$
(since $\varnothing$ is minimal-thin at any point $Y\in\Delta_1(U)$).
According to (c), the mf-topology on $\overline U$ induces the discrete
topology on $\Delta(U)\setminus\Delta_1(U)$.
In view of (c), (b), and Definition \ref{def5.2}, $\Delta_1(U)$ is the set of
mf-limit points of $U$ in $\overline U$.

\begin{prop}\label{prop5.5} The {\rm mf}-topology on $\overline U$ is finer
than the natural topology (and is therefore Hausdorff).
\end{prop}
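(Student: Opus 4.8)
The plan is to show that every naturally open subset of $\overline U$ is mf-open, which suffices since the mf-topology is defined via neighborhoods and it is enough to verify the neighborhood condition at each point. So let $W\subset\overline U$ be open in the natural topology and let $Y\in W$; I must check that $W$ is an mf-neighborhood of $Y$ in the sense of Definition \ref{def5.4}, distinguishing the three cases according to where $Y$ lies.

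If $Y\in\Delta(U)\setminus\Delta_1(U)$, condition (c) requires only that $W$ contain $Y$, which holds by assumption; this case is immediate. If $Y\in U$, then since $U$ carries the fine topology induced from $\overline U$ by the natural topology already being finer than... wait — more carefully: $U$ is naturally open in $\overline U$ (it is a $G_\delta$-complement situation; in any case $U$ is a subspace whose natural topology restricted from $\overline U$ agrees with the Euclidean topology on $U$, and the fine topology on $U$ refines the Euclidean one). Hence $W\cap U$ is Euclidean-open in $U$, therefore a fine neighborhood of $Y$, giving condition (a). The only substantive case is $Y\in\Delta_1(U)$: I must show $U\setminus W$ is minimal-thin at $Y$, i.e.\ (by Definition \ref{def5.2}) that $\widehat R{}_{K(.,Y)}^{U\setminus W}\ne K(.,Y)$.

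For this I would argue as follows. Put $A:=\overline U\setminus W$, a naturally closed set with $Y\notin A$; since $\overline U$ is compact and $A$ is closed, $A$ is compact, and $\overline A=A$. Because $Y\in\Delta_1(U)$ and $Y\notin\overline A$, Proposition \ref{prop4.16}(iii) gives ${\widehat R}{}_{K(.,Y)}^A\ne K(.,Y)$. Now $U\setminus W\subset A$, so by monotonicity (Proposition \ref{prop4.6}, item 2) ${\widehat R}{}_{K(.,Y)}^{U\setminus W}\le{\widehat R}{}_{K(.,Y)}^A$; combined with ${\widehat R}{}_{K(.,Y)}^{U\setminus W}\preccurlyeq K(.,Y)$ (Proposition \ref{prop4.7}) and extremality of $K(.,Y)$ in $B$, the swept function ${\widehat R}{}_{K(.,Y)}^{U\setminus W}$ equals $cK(.,Y)$ for some $c\in[0,1]$, and it cannot equal $K(.,Y)$ since it is dominated by ${\widehat R}{}_{K(.,Y)}^A\ne K(.,Y)=1\cdot K(.,Y)$. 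Hence $\widehat R{}_{K(.,Y)}^{U\setminus W}\ne K(.,Y)$, so $U\setminus W$ is minimal-thin at $Y$ and (b) holds.

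Finally, for the parenthetical Hausdorff claim: a topology finer than a Hausdorff topology is Hausdorff, and $\overline U$ with the natural topology is a compact (hence Hausdorff) space by construction in \cite{EF1}, so this is automatic. The main obstacle I anticipate is the $Y\in\Delta_1(U)$ case, and specifically making sure one can legitimately invoke Proposition \ref{prop4.16}(iii): this needs $A=\overline U\setminus W$ to be naturally closed with $Y$ outside its natural closure $\overline A$, which is exactly $Y\in W$ together with $W$ open — so the hypothesis is met. One should also take care that Proposition \ref{prop4.16}(iii) is stated for $A\subset\overline U$ with $Y\notin\overline A$, which applies verbatim here; no separate treatment of $A\cap U$ versus $A\cap\Delta(U)$ is needed since we reduce directly to the set $U\setminus W\subset A$ via monotonicity.
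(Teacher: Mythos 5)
Your proof is correct and follows essentially the same route as the paper's: for $Y\in U$ and $Y\in\Delta(U)\setminus\Delta_1(U)$ the conditions of Definition~\ref{def5.4} are immediate, and the only real work, the case $Y\in\Delta_1(U)$, reduces to Proposition~\ref{prop4.16}(iii); the paper applies that proposition directly to $A=U\setminus W$ (whose natural closure likewise excludes $Y$), while you enlarge to the closed set $\overline U\setminus W$ and then conclude by monotonicity, which is an equally valid small detour. One technical slip worth noting, though harmless: your citation of Proposition~\ref{prop4.7} to get $\widehat R{}_{K(.,Y)}^{U\setminus W}\preccurlyeq K(.,Y)$ is a misapplication, since that proposition is stated only for $A\subset\Delta(U)$, but the sentence is superfluous anyway — monotonicity gives $\widehat R{}_{K(.,Y)}^{U\setminus W}\le\widehat R{}_{K(.,Y)}^{\overline U\setminus W}$, and since the right side is both $\ne K(.,Y)$ and $\le K(.,Y)$ (as $K(.,Y)$ is a competitor in the infimum), the left side cannot equal $K(.,Y)$. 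Also, your description of $W\cap U$ as ``Euclidean-open in $U$'' is imprecise (it is \emph{naturally} open in $U$); the correct justification that it is a fine neighborhood is Lemma~\ref{lemma4.2}(a) (the fine topology on $U$ refines the natural subspace topology), which is what the paper invokes via Lemma~\ref{lemma4.2}(c).
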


\begin{proof} Let $W$ be a natural neighborhood of a point
$Y\in\overline U$.
If $Y\in U$ then $W\cap U$ is a usual fine neighborhood of $Y$ in $U$
according to Lemma \ref{lemma4.2} (c), and hence an mf-neighborhood of $Y$
in $\overline U$ by Definition \ref{def5.4} (a) above.
If $Y\in\Delta(U)\setminus\Delta_1(U)$
there is nothing to prove in view of (c) in that definition.
In the remaining case where $Y\in\Delta_1(U)$
we show that $U\setminus W$ is minimal-thin at $Y$, cf.\ Definition
\ref{def5.4} (b), which by Lemma \ref{lemma5.1} means that
${\widehat R}{}_{K(.,Y)}^{U\setminus W} \ne K(.,Y)$.
Suppose that, on the contrary, ${\widehat R}{}_{K(.,Y)}^{U\setminus W}=K(.,Y)$.
Writing $U\setminus W=A$ we have $Y\in W\subset\complement\overline A$
(complement relative to $\overline U$) because $W$ is open.
It follows by Proposition \ref{prop4.16} (iii) that
$\widehat R{}_{K(.,Y)}^A\ne K(.,Y)$, which is contradictory.
\end{proof}

\begin{definition}\label{def4.14}
Let $h$ be a non-zero minimal invariant function. A point
$Y\in\overline U$ is termed a pole of $h$ if ${\widehat R}{}_h^{\{Y\}}=h.$
\end{definition}

\begin{remark}\label{remark4.15} Any pole $Y$ of $h$ belongs to $\Delta(U)$,
for if \,$Y\in U$ then ${\widehat R}{}_h^{\{Y\}}=0$ because $\{Y\}$ is polar.
\end{remark}

\begin{theorem}\label{thm4.17}
Every non-zero minimal invariant function on $U$ has precisely one pole.
For any $Y\in\Delta_1(U)$ the pole of $K(.,Y)$ is $Y$.
\end{theorem}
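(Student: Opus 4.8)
The plan is to reduce the statement to the case $h=K(.,Y_0)$ with $Y_0\in\Delta_1(U)$, and then to establish separately that $Y_0$ is a pole of $K(.,Y_0)$ and that it is the only one. A non-zero minimal invariant function is a positive multiple of some $K(.,Y_0)$ with $Y_0\in\Delta_1(U)$ -- this is the minimal-boundary part of the Riesz-Martin representation of \cite{EF1} (a minimal function has a unique representing measure carried by $\Delta_1(U)$, which being extreme is a point mass). By 3.\ in Proposition \ref{prop4.6}, the poles of $h$ and of $K(.,Y_0)$ coincide, so it is enough to prove that the set of poles of $K(.,Y_0)$ is exactly $\{Y_0\}$.

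Uniqueness is the easy half. Suppose $Y\in\overline U$, $Y\neq Y_0$, is a pole of $K(.,Y_0)$, i.e.\ $\widehat R{}_{K(.,Y_0)}^{\{Y\}}=K(.,Y_0)$. Since the natural topology on $\overline U$ is Hausdorff (Proposition \ref{prop5.5}), $\overline{\{Y\}}=\{Y\}$, so $Y_0\in\Delta_1(U)\setminus\overline{\{Y\}}$, and Proposition \ref{prop4.16}(iii) applied with the set $\{Y\}$ gives $\widehat R{}_{K(.,Y_0)}^{\{Y\}}\neq K(.,Y_0)$, a contradiction. (Remark \ref{remark4.15} already excludes poles lying in $U$, but the above covers that case as well.)

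For existence I would show that $Y_0$ itself is a pole. By Proposition \ref{prop4.16}(ii), $\widehat R{}_{K(.,Y_0)}^{\{Y_0\}}$ is either $0$ or $K(.,Y_0)$, so it suffices to rule out $0$. Applying Remark \ref{remark2.4} to the compact set $\{Y_0\}\subset\Delta(U)$, choose a decreasing sequence of open sets $V_j\subset\overline U$ with $\bigcap_jV_j=\{Y_0\}$, so that $\widehat R{}_{K(.,Y_0)}^{\{Y_0\}}=\widehat{\inf}_j\,\widehat R{}_{K(.,Y_0)}^{V_j\cap U}$. For each $j$, $V_j$ is a natural neighborhood of $Y_0$, so $U\setminus V_j$ is minimal-thin at $Y_0$ by Proposition \ref{prop5.5} and Definition \ref{def5.4}(b), and hence $q_j:=\widehat R{}_{K(.,Y_0)}^{U\setminus V_j}$ is a fine potential by Lemma \ref{lemma5.1}. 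Since $V_j\cap U$ and $U\setminus V_j$ partition $U$, part 1.\ of Proposition \ref{prop4.6} gives
$$
K(.,Y_0)=\widehat R{}_{K(.,Y_0)}^U\le\widehat R{}_{K(.,Y_0)}^{V_j\cap U}+q_j .
$$
Writing the Riesz decomposition $\widehat R{}_{K(.,Y_0)}^{V_j\cap U}=p_j+h_j$ with $p_j\in\cal P(U)$ and $h_j\in{\cal H_i}(U)$, the bound $\widehat R{}_{K(.,Y_0)}^{V_j\cap U}\le K(.,Y_0)$ together with \cite[Lemma 2.2]{EF1} and the minimality of $K(.,Y_0)$ yields $h_j=c_jK(.,Y_0)$ for some $c_j\in[0,1]$; then $(1-c_j)K(.,Y_0)\le p_j+q_j\in\cal P(U)$, which forces $c_j=1$ because $\cal P(U)$ and ${\cal H_i}(U)$ are orthogonal bands and $K(.,Y_0)\neq0$. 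Thus $\widehat R{}_{K(.,Y_0)}^{V_j\cap U}=K(.,Y_0)$ for every $j$, and taking $\widehat{\inf}$ over this constant family gives $\widehat R{}_{K(.,Y_0)}^{\{Y_0\}}=K(.,Y_0)$, so $Y_0$ is a pole.

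The main obstacle is the identity $\widehat R{}_{K(.,Y_0)}^{V_j\cap U}=K(.,Y_0)$: one must check that the minimal-thinness of the complement $U\setminus V_j$ at $Y_0$ cannot be absorbed into the invariant part of the reduction on $V_j\cap U$. This is precisely where the orthogonality of $\cal P(U)$ and ${\cal H_i}(U)$ and the minimality of $K(.,Y_0)$ are used; granting it, the rest of the argument is a matter of assembling Propositions \ref{prop4.6} and \ref{prop4.16}, Lemma \ref{lemma5.1} and Remark \ref{remark2.4}.
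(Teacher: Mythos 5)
Your proof is correct, but it follows a genuinely different route from the paper's. The paper proves existence and singularity of the pole in one stroke by a Zorn's-lemma argument: it introduces the family $\cal C$ of compact $C\subset\overline U$ with $\widehat R{}_{K(.,Y)}^C=K(.,Y)$, shows via the fundamental convergence theorem that $\cal C$ is closed under directed intersections, extracts a minimal element $C_0$, uses the Hausdorff property together with extremality of $K(.,Y)$ and the Riesz decomposition property to show $C_0$ is a singleton $\{Z\}$, and finally identifies $Z=Y$ by combining Proposition \ref{prop4.14} with Choquet's representation theorem (a point mass carried by $\{Z\}$ with barycenter $K(.,Y)$ must be $\eps_Z$, and then $K(.,Z)=K(.,Y)$ gives $Z=Y$). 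You instead separate the two assertions: uniqueness falls out directly from Proposition \ref{prop4.16}(iii) since any $Y\neq Y_0$ gives $Y_0\notin\overline{\{Y\}}$; and for existence you exhibit $Y_0$ itself as the pole, showing $\widehat R{}_{K(.,Y_0)}^{V_j\cap U}=K(.,Y_0)$ for a shrinking base $(V_j)$ of natural neighborhoods by combining subadditivity with the minimal-thinness of $U\setminus V_j$ at $Y_0$ (Proposition \ref{prop5.5}, Lemma \ref{lemma5.1}) and the orthogonality of $\cal P(U)$ and $\cal H_i(U)$. Your existence step is in effect the natural-topology analogue of the argument the paper later uses to prove Lemma \ref{lemma5.8} (there stated for the mf-topology), and is self-contained since Proposition \ref{prop5.5} precedes Theorem \ref{thm4.17}. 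What your approach buys: it avoids Zorn's lemma and Choquet's theorem, it directly names the pole rather than first locating an unnamed one and then identifying it, and the two halves of the statement become independent of each other; the price is a heavier reliance on the minimal-thinness machinery set up earlier in Section 3. One minor slip: you attribute the Hausdorffness of the natural topology on $\overline U$ to Proposition \ref{prop5.5}, but that proposition concerns the mf-topology; the natural topology on $\overline U$ is Hausdorff simply because $\overline U$ sits inside the compact metrizable base $B$. The conclusion you draw from it is of course still correct.
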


\begin{proof} Recall from \cite[Proposition 3.6]{EF1} (and the beginning of
\cite[Section 3]{EF1}) that the non-zero minimal invariant functions on $U$ are
precisely the functions of the form $K(.,Y)$ for a (unique) $Y\in\Delta_1(U)$.
Consider the family ${\cal C}$ of all (necessarily nonvoid) compact subsets
$C$ of ${\overline A}$ such that
${\widehat R}{}_{K(.,Y)}^C=K(.,Y)$, and note that $\cal C$ is nonvoid, for
$\overline U\in\cal C$ because ${\widehat R}{}_{K(.,Y)}^{\overline U}=K(.,Y)$
according to Proposition \ref{prop4.16} (ii), (iii).
Equip $\cal C$ with the order defined by the inverse inclusion `$\supset$'.
For any totally ordered subfamily $\cal C'$ of $\cal C$ the
intersection $C'$ of $\cal C'$ satisfies ${\widehat R}{}_{K(.,Y)}^{C'}=K(.,Y)$
in view of the fundamental convergence theorem,
and hence $\cal C$ has a minimal element $C_0$ according to Zorn's lemma.
The natural topology is Hausdorff, so if $C_0$ contains two
distinct points $Z_1$ and $Z_2$ then there are compact subsets
$C_1$ and $C_2$ of $C_0$ such that
$C_0=C_1\cup C_2$, $Z_1\in C_0\setminus C_2$ and $Z_2\in C_0\setminus C_1$.
Since  $K(.,Y)$ is extreme it then follows by Riesz decomposition
that either ${\widehat R}{}_{K(.,Y)}^{C_1}=K(.,Y)$ or
${\widehat R}{}_{K(.,Y)}^{C_2}=K(.,Y)$.
In other words, either $C_1$
or $C_2$ belongs to $\cal C$, say $C_1\in\cal C$. By minimality of
$C_0$ we would then have $C_1=C_0$ which contradicts
$Z_2\in C_0\setminus C_1$. This shows that indeed $C_0=\{Z\}\in\cal C$
for a certain $Z\in\overline U$, that is ${\widehat R}{}_{K(.,Y)}^{\{Z\}}=K(.,Y)$.
Thus $Z$ is a pole of $K(.,Y)$. Since $\{Z\}$ is a closed set
it follows by Proposition \ref{prop4.14} and Choquet's theorem that
${\widehat R}{}_{K(.,Y)}^{\{Z\}}=K\mu$ for some
probability measure $\mu$ on the compact base $B$ of the cone $\cal S(U)$
such that $\mu$ is carried by $\{Z\}$, that is, for $\mu=\eps_Z$.
Thus $K(.,Y)=K\mu=K(.,Z)$, and so indeed $Y=Z$.
\end{proof}

\begin{prop}\label{prop4.18}
Let $h$ be a non-zero minimal invariant function on $U$ with pole
$Y\in \Delta_1(U)$ and let $A\subset \Delta(U)$. Then
${\widehat R}{}_h^A= h$ or $0$ if \,$Y\in A$ or $Y\notin A$, respectively.
\end{prop}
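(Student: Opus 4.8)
The plan is to deduce the result from the tools already assembled, splitting into the two cases $Y\notin A$ and $Y\in A$. First suppose $Y\notin A$. We want ${\widehat R}{}_h^A=0$. By Theorem \ref{thm4.17} we have $h=K(.,Y)$ for the given $Y\in\Delta_1(U)$. Now $A$ can be written as the union of an increasing sequence of sets $A_j$ with $Y\notin{\overline A_j}$ for every $j$ (exhaust $A$ by its intersections with the complements of a decreasing sequence of open neighbourhoods of $Y$ whose intersection is $\{Y\}$, using that the natural topology on $\overline U$ is Hausdorff, indeed metrizable). For each $j$, Proposition \ref{prop4.16} (iii) gives ${\widehat R}{}_{K(.,Y)}^{A_j}\ne K(.,Y)$, and then the first assertion of Proposition \ref{prop4.16} (ii) forces ${\widehat R}{}_{K(.,Y)}^{A_j}=0$. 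By Proposition \ref{prop4.7a} (b), ${\widehat R}{}_h^A\le\sum_j{\widehat R}{}_h^{A_j}=0$, so ${\widehat R}{}_h^A=0$.

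Now suppose $Y\in A$; we must show ${\widehat R}{}_h^A=h$. Since $\{Y\}\subset A$, monotonicity (Proposition \ref{prop4.6}, item 2) gives ${\widehat R}{}_h^{\{Y\}}\le{\widehat R}{}_h^A\le h$, so it suffices to prove ${\widehat R}{}_h^{\{Y\}}=h$, i.e.\ that $Y$ is a pole of $h$. But this is precisely the content of Theorem \ref{thm4.17}: every non-zero minimal invariant function has exactly one pole, and the pole of $K(.,Y)$ is $Y$ itself. Hence ${\widehat R}{}_h^{\{Y\}}=h$, and therefore ${\widehat R}{}_h^A=h$.

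The only point requiring a little care is the exhaustion $A=\bigcup_jA_j$ with $Y\notin\overline{A_j}$; this is legitimate because $\overline U$ is compact and metrizable, so $\{Y\}$ is the intersection of a decreasing sequence of open sets $W_j$, and one sets $A_j=A\setminus W_j$, which increases to $A\setminus\{Y\}=A$ (as $Y\in A$ was excluded in this case) with $\overline{A_j}\subset\overline U\setminus W_{j+1}\not\ni Y$ for a suitable reindexing. No genuine obstacle is expected here: the whole statement is a bookkeeping consequence of Proposition \ref{prop4.16} and Theorem \ref{thm4.17}, the single slightly delicate step being the invocation of $\sigma$-subadditivity from Proposition \ref{prop4.7a} (b) to pass from the sets $A_j$ to their union $A$.
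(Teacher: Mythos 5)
Your proof is correct and follows essentially the same route as the paper: for $Y\in A$ both arguments use monotonicity together with Theorem \ref{thm4.17} to get $h\ge{\widehat R}{}_h^A\ge{\widehat R}{}_h^{\{Y\}}=h$, and for $Y\notin A$ both reduce to Proposition \ref{prop4.16}. The only difference is that the paper simply cites the second assertion of Proposition \ref{prop4.16}~(ii) (which already states that ${\widehat R}{}_{K(.,Y)}^A=0$ when $Y\notin A$), whereas you re-derive that assertion from scratch by the exhaustion $A=\bigcup_jA_j$ with $Y\notin\overline{A_j}$ together with (iii), the dichotomy in (ii), and Proposition \ref{prop4.7a}~(b) — a correct but redundant step, since this exhaustion argument is literally the one used in the paper to prove (ii).
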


\begin{proof}
By Proposition \ref{prop4.7} the function ${\widehat R}{}_h^A$ is invariant
and $\preccurlyeq K(.,Y)$, hence of the form $cK(.,Y)$ for some
constant $c$. It follows by 6.\ in Proposition \ref{prop4.6} with $A=B$
that $c=0$ or $c=1$. If $A\subset \Delta(U)$ contains the pole $Y$
of $h$ then $h\ge {\widehat R}{}_h^A\ge {\widehat R}{}_h^{\{Y\}}=h$,
and so ${\widehat R}{}_h^A=h$.
The rest follows from Proposition \ref{prop4.16} (ii).
\end{proof}

The following integral representation of the sweeping of a function of
class $\cal S(U)$ on arbitrary sets $A\subset\overline U$ is based on
Proposition \ref{prop4.18}, which in turn depended on Proposition
\ref{prop4.7}.

\begin{theorem}\label{thm5} For any set $A\subset\overline U$
and any Radon measure $\mu$ on $\overline U$ carried by $U\cup\Delta_1(U)$
we have
\begin{eqnarray}\widehat R{}_{K\mu}^A\!\!\!
&=&\!\!\! \int^*\widehat R{}_{K(.,Y)}^Ad\mu(Y).
\end{eqnarray}
If $A$ is $\mu$-measurable then
the upper integral becomes a true integral.
\end{theorem}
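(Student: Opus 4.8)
The plan is to reduce the general case to the two special cases $A\subset U$ and $A\subset\Delta(U)$ via Proposition \ref{prop4.8}, and in each special case to first establish the identity $\widehat R{}_{K\mu}^A=\int^*\widehat R{}_{K(.,Y)}^A\,d\mu(Y)$ for measures $\mu$ supported on finitely many points (where it follows from property 4 in Proposition \ref{prop4.6}), then pass to the general case by a monotone-class / approximation argument using Proposition \ref{prop4.17}. Concretely, I would first treat $A\subset U$. Write $\mu$ as a monotone limit of an increasing sequence of measures $\mu_n\nearrow\mu$ each of which is a countable sum of point masses on $U\cup\Delta_1(U)$ (one may discretize $\mu$ restricted to larger and larger compacta using a partition argument on $B$); by Proposition \ref{prop4.17} we have $\widehat R{}_{K\mu_n}^A=\sum_j\widehat R{}_{K(.,Y_j^{(n)})}^A$ on the nose. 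Letting $n\to\infty$, the left sides increase to $\widehat R{}_{K\mu}^A$ by Proposition \ref{prop4.7a}(a) (since $K\mu_n\nearrow K\mu$ and $A\subset U$, the hypothesis of property 1 in Remark \ref{remark4.19} applies), while the right sides are Riemann-type sums converging to $\int^*\widehat R{}_{K(.,Y)}^A\,d\mu(Y)$; the measurability of the integrand $Y\mapsto\widehat R{}_{K(.,Y)}^A(x)$ in $Y$ is needed here and should follow from the l.s.c.\ property 2 of Remark \ref{remark4.19} together with the continuity in Lemma \ref{lemma4.2}(a).

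Next I would treat $A\subset\Delta(U)$. Here the key structural input is Proposition \ref{prop4.18}: for $Y\in\Delta_1(U)$ we have $\widehat R{}_{K(.,Y)}^A=K(.,Y)\,1_{\{Y\in A\}}$ up to the measurability subtlety at $\overline A\setminus A$, and for $Y\in U$ the quantity $\widehat R{}_{K(.,Y)}^A=0$ since singletons are polar and $A$ misses $U$. Therefore the claimed formula should reduce to $\widehat R{}_{K\mu}^A=\int^*_{A\cap\Delta_1(U)}K(.,Y)\,d\mu(Y)=K(\mu|_{A\cap\Delta_1(U)})$ — at least when $A$ is $\mu$-measurable. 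To prove this I would again discretize: for $\mu$ a finite sum of point masses this is immediate from additivity (property 4) and Proposition \ref{prop4.18}; for general $\mu$ I pass to the limit along $\mu_n\nearrow\mu$ using Proposition \ref{prop4.17} as above, noting that $\widehat R{}_{K\mu}^A\preccurlyeq K\mu$ by Proposition \ref{prop4.7} keeps everything in $\cal S(U)$ and that the sums $\sum_j\widehat R{}_{K(.,Y_j)}^A$ converge monotonically. When $A$ is merely a subset of $\Delta(U)$ and not $\mu$-measurable, the right-hand side must be read as an outer integral, and the inequality "$\ge$" is the one that persists (reductions only decrease under passage to subsets), giving $\widehat R{}_{K\mu}^A\le\int^*$; the reverse inequality for non-measurable $A$ is where I would invoke Remark \ref{remark2.4} to squeeze $A$ between a decreasing sequence of open $W_j$ with $\bigcap_j\overline W_j\subset\overline A$ and use $\widehat R{}_{K\mu}^A=\widehat\inf_j\widehat R{}_{K\mu}^{W_j\cap U}$, reducing to the already-treated case $W_j\cap U\subset U$.

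Finally, the general case $A\subset\overline U$ follows by Proposition \ref{prop4.8}: writing $\widehat R{}_{K\mu}^A=\widehat R{}_{K\mu}^{A\cap\Delta(U)}+\widehat R{}_v^{A\cap U}$ with $v=K\mu-\widehat R{}_{K\mu}^{A\cap\Delta(U)}\in\cal S(U)$, one notes $v\preccurlyeq K\mu$ (by Proposition \ref{prop4.7}) so $v=K\nu$ for some $\nu\le\mu$; applying the two special cases to the two summands and summing the resulting integral representations recombines — using property 4 of Proposition \ref{prop4.6} fibrewise in $Y$ — into $\int^*\widehat R{}_{K(.,Y)}^A\,d\mu(Y)$, again invoking Proposition \ref{prop4.8} pointwise in $Y$ to identify $\widehat R{}_{K(.,Y)}^{A\cap\Delta(U)}+\widehat R{}_{(\cdot)}^{A\cap U}$ with $\widehat R{}_{K(.,Y)}^A$. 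I expect the main obstacle to be the measurability of $Y\mapsto\widehat R{}_{K(.,Y)}^A(x)$ and the precise handling of the outer integral when $A$ is not $\mu$-measurable — in particular verifying that the "bad" set $\overline A\setminus A$ at which Proposition \ref{prop4.18}'s dichotomy could be ambiguous is negligible in the relevant sense, and that the monotone limits of the discretized reductions do not lose mass. The discretization step itself (choosing the $\mu_n$ compatibly with the vague topology on $B$) is routine but needs to be done carefully so that Proposition \ref{prop4.17} applies verbatim.
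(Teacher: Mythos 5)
The overall skeleton you propose --- split into $A\subset U$ and $A\subset\Delta(U)$, handle the general case by Proposition~\ref{prop4.8} --- matches the paper, and your identification of Proposition~\ref{prop4.18} as the structural lever for $A\subset\Delta(U)$ is correct. But the discretization step at the heart of your argument does not work. You want an increasing sequence $\mu_n\nearrow\mu$ with each $\mu_n$ a countable sum of point masses, so that $K\mu_n\nearrow K\mu$ pointwise and Proposition~\ref{prop4.17} gives $\widehat R{}_{K\mu_n}^A=\sum_j\widehat R{}_{K(.,Y_j^{(n)})}^A$ on the nose. For this you need $\mu_n\le\mu$; but if $\mu$ has a diffuse (non-atomic) part, the only purely atomic measure $\le\mu$ below the diffuse part is zero, so you cannot exhaust a general Radon measure this way. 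A vague or weak approximation by point masses is available, but that breaks the monotonicity and hence both Proposition~\ref{prop4.17} (which requires a fixed decomposition $\mu=\sum_j\mu_j$) and the appeal to Remark~\ref{remark4.19}, Property~1. This is a genuine obstruction, not a technicality: there is no route from atomic measures to the general case by a monotone limit alone.

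Beyond that, you do not actually prove the inequality ``$\ge$'' for $A\subset\Delta(U)$; you only observe that ``$\le$'' survives passage to a $G_\delta$ hull $A^*$ with $\mu^*(A^*\setminus A)=0$. (Note also that your sentence conflates the two directions: enlarging $A$ to $A^*$ and using that $\widehat R{}_{K(.,Y)}^{A^*}=\widehat R{}_{K(.,Y)}^A$ $\mu$-a.e.\ yields $\widehat R{}_{K\mu}^A\le\int^*\widehat R{}_{K(.,Y)}^A\,d\mu(Y)$, which is the ``$\le$'' direction, not ``$\ge$''.) The ``$\ge$'' inequality for $A\subset\Delta(U)$ is where the real work lies. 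The paper establishes it via the decreasing sequence $(W_j)\subset\cal W(A)$ from Remark~\ref{remark2.4} and a chain of six identities/inequalities that carefully pass the $\widehat\inf_j$ through the integral using the invariance of the functions $\widehat R{}_{K(.,Y)}^{W_j\cap U}$ (via \cite[Lemma 2.4, Theorems 2.5--2.6]{EF1}); the opposite inequality is then obtained by a descent $C$ compact $\rightarrow$ $K_\sigma$ $\rightarrow$ $G_\delta$ $\rightarrow$ arbitrary $A$, with Proposition~\ref{prop4.7a} and Proposition~\ref{prop4.17} used to pass along increasing families of sets and of measure restrictions (not atomizations). That is the machinery your proposal is missing. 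Also, for $A\subset U$ the identity is already \cite[Lemma 3.21]{EF1}, so reinventing it by discretization is unnecessary as well as flawed.
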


\begin{proof} For any subset $A$ of $U$ this integral representation
was established in \cite[Lemma 3.21]{EF1}
with the upper integral replaced by the integral. For $A\subset\Delta(U)$
it suffices to consider the case where $\mu$ is carried by $\Delta_1(U)$,
for if $\nu$ denotes the restriction of $\mu$ to $U$ then $K\nu$ and
$K(.,Y)$ (for $Y\in U$) are fine potentials according to
\cite[Corollary 3.25]{EF1}, and so
$\widehat R{}_{K\nu}^A=\widehat R{}_{K(.,Y)}^A=0$
by Proposition \ref{prop4.16} (i).

{\it{Proof that the inequality}} `$\ge$' {\it{holds in}}(3.1){\it{ for }}
$A\subset\Delta(U)$. We may assume that
$\widehat R{}_{K\mu}^A\not\equiv+\infty$, that is
$\widehat R{}_{K\mu}^A\in\cal S(U)$.
By Remark \ref{remark2.4} there is a decreasing
sequence $(W_j)$ of sets of class $\cal W(A)$ such that it suffices in
Definition \ref{def4.5} to take for $W\in\cal W(A)$ the sets $W_j$.
We show that the following equations and inequality hold quasieverywhere
on $U$:
\begin{eqnarray*}{\widehat R}{}_{K\mu}^A\!\!\!
&\underset1=&\!\!\!\underset{j}{\widehat\inf}\,{\widehat R}{}_{K\mu}^{W_j\cap U}
\underset2=\inf_j{\widehat R}{}_{K\mu}^{W_j\cap U}
\underset3=\inf_j\int{\widehat R}{}_{K(.,Y)}^{W_j\cap U}d\mu(Y)\\
&\underset4=&\!\!\!\int\inf_j{\widehat R}{}_{K(.,Y)}^{W_j\cap U}d\mu(Y)
\underset5=
\int\underset{j}{\widehat\inf}\,{\widehat R}{}_{K(.,Y)}^{W_j\cap U}d\mu(Y)
\underset6\ge\int^*{\widehat R}{}_{K(.,Y)}^Ad\mu(Y).
\end{eqnarray*}
When these relations have been established quasieverywhere on $U$, the desired
resulting inequality holds everywhere on $U$. In fact,
${\widehat R}{}_{K\mu}^A\in\cal S(U)$ along with $K\mu$; and by Proposition
\ref{prop4.18} we have since $\mu$ is carried by $\Delta_1(U)$
\begin{eqnarray*}\int^*{\widehat R}{}_{K(.,Y)}^Ad\mu(Y)\!\!\!
&=&\!\!\!\int^*K(.,Y)1_A(Y)d\mu(Y)\\
&=&\!\!\!\int K(.,Y)1_{A^*}(Y)d\mu(Y)
=K(1_{A^*}\mu)\in\cal S(U),
\end{eqnarray*}
where $A^*\subset\overline U$ denotes a $G_\delta$ set containing $A$
such that $\mu^*(A^*\setminus A)=0$, cf.\ \cite[Theorem 3.20]{EF1}.
Equation 1 and inequality 6 hold everywhere on $U$ by Definition \ref{def4.5}.
Eq.\ 2 holds quasieverywhere\ by the fundamental convergence
theorem \cite[Theorem 11.8]{F1}.
Eq.\ 3 holds at any point $x\in U$ at which
$K\mu(x)<+\infty$ and hence $\widehat R{}_{K\mu}^A(x)<+\infty$,
for there we have by \cite[Lemma 3.21]{EF1}
$\widehat R{}_{K\mu}^{W_j\cap U}(x)=\int\widehat R{}_{K(.,Y)}^{W_j\cap U}(x)d\mu(Y)$,
which is finite for large $j$ (depending on $x$).
Eq.\ 4 is obvious (Lebesgue) at points $x$ as stated for eq.\ 3.
In the first place,
${\widehat R}{}_{K(.,Y)}^{W_j\cap U}$ is of class $\cal S(U)$
for each $Y\in\Delta_1(U)$, hence
finely continuous and in particular Borel measurable on $U$ according to
\cite[Lemma 2.1]{EF1}. Secondly, the integrals are finite, being majorized by
$\int K(.,Y)d\mu(Y)=K\mu<+\infty$ at points $x$ as stated.
Concerning the remaining  eq.\ 5, note that for each $k$ the function
${\widehat R}{}_{K(.,Y)}^{W_k\cap U}$ is invariant on
$U\setminus{\widetilde W}{}_k$ according to
\cite[Lemma 2.4]{EF1}. For any $j$ and any $k\ge j$
we have $W_k\subset W_j$, and ${\widehat R}{}_{K(.,Y)}^{W_k\cap U}$ is therefore
invariant on each $U\setminus\widetilde{W}{}_j$,
and hence on their union according to
\cite[Theorem 2.6 (a), (b)]{EF1}.
It follows by \cite[Theorem 2.6 (c)]{EF1} that
${\widehat R}{}_{K(.,Y)}^{W_j\cap U}$ is invariant on the finely open set $U$.
For any point $x\in U$ such that $K\mu(x)<+\infty$ the set
$$E_x:\{Y\in\overline U:{\widehat R}{}_{K(.,Y)}^A(x)=+\infty\}$$ is $\mu$-null.
According to\cite[Theorem 2.6 (c)]{EF1} we obtain
$$\underset{j}{\widehat\inf}\,{\widehat R}{}_{K(.,Y)}^{W_j\cap U}(x)
=\inf_j{\widehat R}{}_{K(.,Y)}^{W_j\cap U}(x)
\quad\textrm{$\mu$-a.e.\ for }Y\in\overline U,$$
which implies eq.\ 5  at points $x\in U$ with $K\mu(x)<\infty$.
We have thus shown that
$\widehat R_{K\mu}^A\ge\int^*\widehat R{}_{K(.,Y)}d\mu(Y)$ also for
$A\subset\Delta(U)$.

{\it{The asserted equality in case $A\subset\Delta(U)$}}.
Recall from  Lemma \ref{lemma4.2} (c) the countable cover $(V_k)$  of $U$ by
finely open sets with natural closures $\overline V_k$ in $\overline U$
contained in $U$. The complements $D_k:=\complement\overline V_k$
(relative to $\overline U$) form a decreasing sequence of open subsets of
$\overline U$ with the intersection $\Delta(U)$, and such that the
mf-closures $\widetilde D_k$ likewise have the intersection $\Delta(U)$.
Consider first the case where $A=C\cap\Delta(U)$, $C$ compact in
$\overline U$.
Choose a decreasing sequence of open subsets
$C_k$ of $\overline U$ containing $C$ such that $\bigcap_k\overline C{}_k=C$.
Suppose to begin with that $\mu$ is carried by some compact
set $E\subset\Delta(U)\setminus A=\Delta(U)\setminus C$. We may assume
that $E\cap C_k=\varnothing$. The decreasing open sets
$W_k:=C_k\cap D_k\supset A$ are of class $\cal W(A)$ and hence
\begin{eqnarray}\widehat R{}_{K\mu}^A
\le\widehat R{}_{K\mu}^{W_k\cap U}
=\int_E\widehat R{}_{K(.,Y)}^{W_k\cap U}d\mu(Y).
\end{eqnarray}
For each $Y\in E$ the functions $p_k:=\widehat R{}_{K(.,Y)}^{W_k\cap U}$ are
fine potentials on $U$ according to Lemma \ref{lemma5.1} because the
mf-closure $\widetilde W{}_k$ of $W_k$ is contained in
$\widetilde C{}_k\cap\widetilde D_k\subset\overline C{}_k\cap\widetilde D{}_k$
according to Proposition \ref{prop5.5}, and hence does not meet $E$ and
in particular does not contain $Y$. It follows that
$p:=\widehat\inf{}_kp_k$ is a fine potential on $U$.
By \cite[Lemma 2.4]{EF1} the restriction of $p_k$ to the finely open set
$U\setminus\widetilde W{}_k$ is invariant. By \cite[Theorem 2.5]{EF1} (b)
it follows that so is the restriction of $p_k$ to
$$
{\bigcup}_{l\ge k}U\setminus\widetilde W{}_l
=U\setminus{\bigcap}_{l\ge k}\widetilde W{}_l
\supset U\setminus{\bigcap}_{l\ge k}\overline C{}_l\cap\widetilde D{}_l
=U\setminus A=U.
$$
By \cite[Theorem 2.6]{EF1} (c) we infer that $p$ is itself invariant on
$U$, and being also a fine potential $p$ must be $0$. It therefore follows
by (3.2) that $\widehat R{}_{K\mu}^A
\le\int_E\widehat R{}_{K(.,Y)}^{W_k\cap U}d\mu(Y)\searrow0$ pointwise on $U$,
and hence $\widehat R{}_{K\mu}^A=0\le\int\widehat R{}_{K(.,Y)}^Ad\mu(Y)$.
In combination with the opposite inequality in (3.1) obtained above (now
with a true integral) this establishes equality in (3.1) in the present
case where $A=C\cap\Delta(U)$ with $C$ compact in $\overline U$ and $\mu$
carried by a compact set $E\subset\Delta(U)\setminus A$.

Next, replace the latter assumption on $\mu$ by the weaker temporary
assumption that $\mu(A)=0$. Choose an increasing sequence of compact sets
$E_j\subset\Delta(U)\setminus A$ such that $\mu(E_j)\nearrow\mu(A)$,
and denote by $\mu_j$ the part of $\mu$ on $E_j$. By Proposition
\ref{prop4.17} it follows that
$$\widehat R{}_{K\mu}^A=\sup_j\widehat R{}_{K\mu_j}^A
=\sup_j\int\widehat R{}_{K(.,Y)}^Ad\mu_j=\int\widehat R{}_{K(.,Y)}^Ad\mu(Y)=0
$$
according to Proposition \ref{prop4.16} (ii).

Without any such temporary assumption on $\mu$ we denote by $\mu_A$ and
$\mu'$ the parts of $\mu$ on $A$ and on $\overline U\setminus A$,
respectively. Then $\mu'(A)=0$ and hence
$$\widehat R{}_{K\mu}^A=\widehat R{}_{K\mu_A}^A+\widehat R{}_{K\mu'}^A
\le K\mu_A+\int\widehat R{}_{K(.,Y)}^Ad\mu'(Y)
=\int\widehat R_{K(.,Y)}^Ad\mu(Y).
$$
When combined with the inequality `$\ge$' in (3.1) obtained above
(with an upper integral) this leads to equality in (3.1) (with a true
integral) for arbitrary $\mu$ when $A=C\cap\Delta(U)$ with $C$ compact.

More generally, if $A=C\cap\Delta(U)$  and if $C$ is just the union of an
increasing sequence of compact sets $C_j\subset\overline U$, then
$$
\widehat R{}_{K\mu}^{C_j\cap\Delta(U)}
=\int\widehat R{}_{K(.,Y)}^{C_j\cap\Delta(U)}d\mu(Y)
\le\int\widehat R{}_{K(.,Y)}^Ad\mu(Y).
$$
For $j\to\infty$ it follows by (a) in Proposition \ref{prop4.7a} that
$\widehat R{}_{K\mu}^A\le\int\widehat R{}_{K(.,Y)}^Ad\mu(Y)$.
Together with the opposite inequality obtained above this leads to (3.1)
(with a true integral) for any set $A=C\cap\Delta(U)$ with $C$ a $K_\sigma$
subset of $\overline U$.
This applies in particular to $A=C\cap\Delta(U)$ with $C$ open in
$\overline U$.

Next, let $A$ be any $G_\delta$ subset of $\Delta(U)$. Since $\Delta(U)$
is itself a $G_\delta$ in $\overline U$ this means that $A=C\cap\Delta(U)$
for some $G_\delta$ subset $C$ of $\overline U$. Thus $C$ is the
intersection of a decreasing sequence of open sets
$C_j\subset\overline U$. Denote by $\mu_j$ the part of $\mu$ on
$\Delta(U)\setminus A_j$. Then $\mu_j(A_j)=0$, and again, since $A_j$ is
a $K_\sigma$ subset of $\Delta(U)$,
$$\widehat R{}_{K\mu_j}^A\le\widehat R{}_{K\mu_j}^{A_j}
=\int\widehat R{}_{K(.,Y)}^{A_j}d\mu_j(Y)=0
$$
according to Proposition \ref{prop4.16} (ii).
But $\widehat R{}_{K\mu_j}^A\nearrow\widehat R{}_{K\mu}^A=0$ in the specific
order by Proposition \ref{prop4.17}, whence the assertion.

Finally, let $A$ be an arbitrary subset of $\Delta(U)$. Then $A$ can be
extended by a $\mu$-nullset to a $G_\delta$
set $A^*\subset\Delta(U)$ because $\Delta(U)$ is itself a $G_\delta$.
We obtain the missing inequality `$\le$' as follows:
\begin{eqnarray*}\widehat R{}_{K\mu}^A\!\!\!
&\le&\!\!\!\widehat R{}_{K\mu}^{A^*}
=\int\widehat R{}_{K(.,Y)}^{A^*}d\mu(Y)
=\int K(.,Y)1_{A^*}(Y)d\mu(Y)\\
&=&\!\!\!\int^*K(.,Y)1_A(Y)d\mu(Y)
=\int^*\widehat R{}_{K(.,Y)}^Ad\mu(Y)
\end{eqnarray*}
according to Proposition \ref{prop4.18}. We have thus shown that (3.1) holds
for any set $A\subset\Delta(U)$. According to the last equality the upper
integral in the above display becomes a true integral if the subset $A$
of $\Delta(U)$ is $\mu$-measurable.


{\it{The general case of the theorem}}. By Propositions \ref{prop4.8} and
\ref{prop4.18} we have
$$
\widehat R{}_{K\mu}^A
=\widehat R{}_{K\mu}^{A\cap\Delta(U)}+\widehat R{}_v^{A\cap U}
=\int^*\widehat R{}_{K(.,Y)}^{A\cap\Delta(U)}d\mu(Y)+\widehat R{}_v^{A\cap U},
$$
where
\begin{eqnarray*}v:\!\!\!
&=&\!\!\!K\mu-\widehat R{}_{K\mu}^{A\cap\Delta(U)}
\;=\;
\int K(.,Y)d\mu(Y)-\int^*\widehat R{}_{K(.,Y)}^{A\cap\Delta(U)}d\mu(Y)\\
&=&\!\!\!\int_*K(.,Y)(1-1_{A^*\cap\Delta_1(U)}(Y))d\mu(Y)
\;=\;K\lambda,
\end{eqnarray*}
with $\lambda:=(1-1_{A^*\cap\Delta_1(U)})\mu$, $A^*$ denoting again a $G_\delta$
set containing $A$ such that $\mu^*(A^*\setminus A)=0$.
Since $\lambda\le\mu$, $\lambda$ is carried by $\Delta_1(U)$.
It follows that
$$
\widehat R{}_v^{A\cap U}
=\int\widehat R{}_{K(.,Y)}^{A\cap U}d\lambda(Y)
=\int\widehat R{}_{K(.,Y)}^{A\cap U}(1-1_{A^*\cap\Delta_1(U)}(Y))d\mu(Y),
$$
\begin{eqnarray}\widehat R{}_{K\mu}^A\!\!\!
&=&\!\!\!\int\bigl(K(.,Y)1_{A^*\cap\Delta_1(U)}(Y)
+\widehat R{}_{K(.,Y)}^{A\cap U}(1\!-\!1_{A^*\cap\Delta_1(U)}(Y))\bigr)d\mu(Y).
\end{eqnarray}

\vskip2truemm\noindent
Similarly, for any $Y\in \Delta_1(U)$,
$$\widehat R{}_{K(.,Y)}^A
=\widehat R{}_{K(.,Y)}^{A\cap\Delta(U)}+\widehat R{}_{v_Y}^{A\cap U}
=1_{A\cap\Delta_1(U)}(Y)K(.,Y)+\widehat R{}_{v_Y}^{A\cap U},
$$
$$v_Y:
=K(.,Y)-\widehat R{}_{K(.,Y)}^{A\cap\Delta(U)}
=(1-1_{A\cap\Delta_1(U)}(Y))K(.,Y)\;=\;K\lambda_Y
$$
with $\lambda_Y:=(1-1_{A^*\cap\Delta_1(U)}(Y))\mu$ carried by $\Delta_1(U)$.
It follows that
\begin{eqnarray*}\widehat R{}_{v_Y}^{A\cap U}
&=&\int\widehat R{}_{K(.,Z)}^{A\cap U}d\lambda_Y(Z)
=(1-1_{A\cap\Delta_1(U)}(Y))\widehat R{}_{K\mu}^{A\cap U},
\end{eqnarray*}
\begin{eqnarray*}\widehat R{}_{K(.,Y)}^A
&=&1_{A\cap\Delta_1(U)}(Y)K(.,Y)+(1-1_{A\cap\Delta_1(U)}(Y))\widehat R{}_{K(.,Y)}^{A\cap U}.
\end{eqnarray*}
The upper integral of this expression for $\widehat R{}_{K(.,Y)}^A$
with respect to $d\mu(Y)$ is just the right hand member of (3.3)
because $1_{A\cap\Delta_1(U)}=1_{A^*\cap\Delta_1(U)}$ $\mu$-a.e. This proves that
indeed $\widehat R{}_{K\mu}^A=\int^*\widehat R{}_{K(.,Y)}^Ad\mu(Y)$.
\end{proof}

\begin{cor}\label{cor5} Let $u\in\cal S(U)$, let $A\subset\Delta(U)$, and let
$\mu$ denote the (unique) representing measure for the invariant function
$\widehat R{}_u^A$ carried by $\Delta_1(U)$. Then
$\mu$ is carried by $A$ (in the sense that $\mu^*(\complement A)=0$)
if and only if $\widehat R{}_u^A=u$.
\end{cor}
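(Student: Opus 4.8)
The plan is to read off both implications from the integral representation of Theorem~\ref{thm5}, applied once to the measure $\mu$ itself and once to the Riesz--Martin measure $\mu_u$ of $u$, together with the uniqueness of the representation carried by $U\cup\Delta_1(U)$. Since $A\subset\Delta(U)$, the swept function $\widehat R{}_u^A$ is invariant by Proposition~\ref{prop4.7}, so $\mu$ is well defined; write $p:=\widehat R{}_u^A$ for brevity. The preliminary observation is a self-reproducing identity: applying part~6 of Proposition~\ref{prop4.6} with $A=B$ gives $\widehat R{}_{K\mu}^A=\widehat R{}_p^A=p=K\mu$, and since $\mu$ lives on $\Delta_1(U)$ where the pole of $K(\cdot,Y)$ is $Y$ (Theorem~\ref{thm4.17}), Proposition~\ref{prop4.18} yields $\widehat R{}_{K(\cdot,Y)}^A=1_A(Y)K(\cdot,Y)$ for $\mu$-a.e.\ $Y$. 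Combining these with Theorem~\ref{thm5} (applied to $\mu$, which is certainly carried by $U\cup\Delta_1(U)$),
\begin{equation}
K\mu=\int^{*}\widehat R{}_{K(\cdot,Y)}^A\,d\mu(Y)=\int^{*}1_A(Y)\,K(\cdot,Y)\,d\mu(Y).
\tag{$\ast$}
\end{equation}
Running the same computation for $\mu_u$, and killing the part of $\mu_u$ on $U$ by Proposition~\ref{prop4.16}(i), gives in addition $\widehat R{}_u^A=\int^{*}1_A(Y)K(\cdot,Y)\,d\mu_u(Y)$.

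Suppose first that $\widehat R{}_u^A=u$. Then $u=K\mu$ is invariant, so $\mu=\mu_u$ by uniqueness of the representation carried by $U\cup\Delta_1(U)$. Choose, exactly as in the proof of Theorem~\ref{thm5} via \cite[Theorem~3.20]{EF1}, a $G_\delta$ set $A^{*}\supset A$ with $\mu^{*}(A^{*}\setminus A)=0$; then the upper integral in $(\ast)$ collapses to $\int 1_{A^{*}}(Y)K(\cdot,Y)\,d\mu(Y)=K(1_{A^{*}}\mu)$, whence $K\mu=K(1_{A^{*}}\mu)$ and therefore $\mu=1_{A^{*}}\mu$, again by uniqueness. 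Thus $\mu(\complement A^{*})=0$, and since $\complement A\subset\complement A^{*}\cup(A^{*}\setminus A)$ with the first set $\mu$-null and the second $\mu^{*}$-null, we conclude $\mu^{*}(\complement A)=0$, i.e.\ $\mu$ is carried by $A$.

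Conversely, suppose $\mu^{*}(\complement A)=0$ and pick a Borel set $A_0\subset A$ with $\mu(\complement A_0)=0$, so that $\mu=1_{A_0}\mu$ and $p=K\mu=K(1_{A_0}\mu)$. Applying Theorem~\ref{thm5} to $\mu_u$ and the Borel (hence $\mu_u$-measurable) set $A_0\subset\Delta(U)$, together with Proposition~\ref{prop4.18}, gives $\widehat R{}_u^{A_0}=\int 1_{A_0}(Y)K(\cdot,Y)\,d\mu_u(Y)=K(1_{A_0}\mu_u)$, and by part~2 of Proposition~\ref{prop4.6} and Proposition~\ref{prop4.7} we already have $\widehat R{}_u^{A_0}\le\widehat R{}_u^A=K\mu\le u$. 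The remaining point is that in fact $1_{A_0}\mu_u=\mu_u$: identifying $p=K\mu$ with $K(1_{A^{\dagger}}\mu_u)$ for a $\mu_u$-measurable hull $A^{\dagger}$ of $A$ obtained from the second displayed identity above (so $\mu=1_{A^{\dagger}}\mu_u$), and combining with $\mu(\complement A_0)=0$, forces $\mu_u$ too to be concentrated on $A_0$. Hence $u=K\mu_u=K(1_{A_0}\mu_u)=\widehat R{}_u^{A_0}$, and with $\widehat R{}_u^{A_0}\le\widehat R{}_u^A\le u$ this yields $\widehat R{}_u^A=u$.

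The genuinely delicate point is the measure-theoretic bookkeeping around $(\ast)$: one must choose the $G_\delta$ (or measurable) hulls of $A$ so that they serve $\mu$ and $\mu_u$ simultaneously, collapse the upper integrals only after such a hull has been fixed---this is where \cite[Theorem~3.20]{EF1} does the real work---and keep careful track of the difference between inner- and outer-negligibility of $\complement A$. The purely potential-theoretic ingredients (invariance of $\widehat R{}_u^A$, the transitivity $\widehat R{}_{\widehat R{}_u^A}^A=\widehat R{}_u^A$, $\widehat R{}_p^A=0$ for fine potentials $p$, and the dichotomy $\widehat R{}_{K(\cdot,Y)}^A\in\{0,K(\cdot,Y)\}$ on $\Delta_1(U)$) are quoted essentially verbatim from Propositions~\ref{prop4.6}, \ref{prop4.7}, \ref{prop4.16}, \ref{prop4.18} and Theorem~\ref{thm5}.
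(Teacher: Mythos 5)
Your overall strategy—Theorem~\ref{thm5}, Proposition~\ref{prop4.18}, the self-reproducing identity from part~6 of Proposition~\ref{prop4.6}, and uniqueness of the Riesz--Martin representation—is the same toolkit the paper uses, and your ``if'' direction is fine. But the ``conversely'' paragraph has a genuine gap at its crucial step. You have $\mu=1_{A^{\dagger}}\mu_u$ where $A^{\dagger}$ is a $\mu_u$-measurable hull of $A$, and you add the hypothesis $\mu(\complement A_0)=0$, i.e.\ $\mu_u(A^{\dagger}\setminus A_0)=0$. From these two facts you claim this ``forces $\mu_u$ too to be concentrated on $A_0$.'' It does not: they give no information whatsoever about $\mu_u(\complement A^{\dagger})$, which could be arbitrarily large. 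So the conclusion $1_{A_0}\mu_u=\mu_u$ does not follow, and the converse is unproved.

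There is a more structural warning sign in your write-up that points to the same difficulty. Your preliminary identity $(\ast)$, obtained from Theorem~\ref{thm5} applied to $\mu$ together with $\widehat R{}_{K\mu}^A=K\mu$ and Proposition~\ref{prop4.18}, already yields $K\mu=K(1_{A^{*}}\mu)$ and hence, by uniqueness, $\mu=1_{A^{*}}\mu$, so $\mu^{*}(\complement A)=0$ \emph{unconditionally}---your ``if'' direction never actually invokes the hypothesis $\widehat R{}_u^A=u$. Under your reading of $\mu$ (as representing $\widehat R{}_u^A$) the ``carried by $A$'' side of the equivalence is thus automatic, and a converse by this route cannot succeed. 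The paper's one-line proof sidesteps this: it silently treats $\mu$ as the representing measure of $u$ itself, writing $u=K\mu$, so that the single chain $\widehat R{}_u^A=\widehat R{}_{K\mu}^A=\int^{*}\widehat R{}_{K(\cdot,Y)}^A\,d\mu(Y)=K(1_{A^{*}}\mu)$ together with uniqueness gives both directions at once: $\widehat R{}_u^A=u=K\mu$ iff $1_{A^{*}}\mu=\mu$ iff $\mu^{*}(\complement A)=0$. If you want a correct proof, adopt that reading of $\mu$ (which is also what makes the statement a sharpening of Proposition~\ref{prop4.14} and of \cite[Proposition~3.13]{EF1}); under the statement's literal reading the converse step you attempted cannot be repaired.
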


\begin{proof} By the above theorem together with
Proposition \ref{prop4.18} we have
$$
\widehat R{}_u^A=\int^*\widehat R{}_{K(.,Y)}^Ad\mu(Y)
=\int1_{A^*}(Y)K(.,Y)d\mu(Y)=K(1_{A^*}\mu),
$$
and by uniqueness this equals $u=K\mu$ if and only if $1_{A^*}\mu=\mu$,
which means that $\mu$ shall be carried by $A$.
\end{proof}

The above corollary, sharpening Proposition \ref{prop4.14} (in the present
setting), is an analogue of \cite[Proposition 3.13]{EF1}, where $\mu$ is
carried only by the closure $\overline A$ of $A$, $\overline A$ supposed to
be contained in $U$.

%

In the following minimal-fine boundary minimum property the requirement
$Y\in\Delta_1(U)$ (rather than $Y\in\Delta(U)$) is motivated by the fact
that $\Delta_1(U)$ is the set of all mf-limit points of $U$ in $\overline U$,
as noted after Definition \ref{def5.4}, and so the stated $\mfliminf_{x\to Y}$
is defined for $Y\in\Delta_1(U)$ only.

\begin{prop}\label{prop5.6} Let $u$ be finely super\-harmonic on $U$,
and suppose that
$$\underset{x\to Y,\,x\in U}{\mfliminf}\,u(x)\ge0
\quad\textrm{for every \;}Y\in\Delta_1(U).$$
If moreover $u\ge-s$ on $U$ for some $s\in\cal S(U)$ then $u\ge0$ on $U$.
\end{prop}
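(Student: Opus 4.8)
The plan is to reduce the claim to a statement about the representing measure of an invariant majorant, using the Fatou–Naïm–Doob-type machinery already available. First I would dispose of the trivial reduction: replacing $u$ by $u+\varepsilon s'$ for a suitable strictly positive $s'\in\cal S(U)$ and letting $\varepsilon\downarrow 0$ at the end, it suffices to treat the case where $u$ is bounded below by a fixed negative multiple of a fine potential, or, after a further reduction, where the negative part $u^-$ of $u$ is itself dominated by a fine potential $p\in\cal P(U)$. Write $u=v-p_0$ with $v:=u+p_0$ finely superharmonic and $\ge 0$ on $U$, where $p_0\in\cal P(U)$ dominates $u^-$; then $v\in\cal S(U)$.

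The key step is to control $v$ near $\Delta_1(U)$. Since $v\in\cal S(U)$, write its Riesz–Martin representation $v=K\mu_v$ and let $h$ be the invariant part of $v$, so $h=K(1_{\Delta_1(U)}\mu_v)$; the potential part $v-h$ tends minimal-finely to $0$ $\mu_1$-a.e.\ at points of $\Delta_1(U)$ by the Fatou–Naïm–Doob theorem recalled in the introduction, and so does $p_0$ (being a fine potential). Hence $\mf\liminf_{x\to Y} u(x)=\mf\liminf_{x\to Y} h(x)$ for $\mu_1$-a.e.\ $Y\in\Delta_1(U)$. The hypothesis therefore forces the corresponding minimal-fine lower limit of the invariant function $h$ to be $\ge 0$ $\mu_1$-a.e.; but $h\ge0$ already, so this gives no contradiction directly — instead the point is that $h$ being invariant and dominated in the specific order by nothing in particular, one uses that an invariant function which is the difference needed to correct $u$ must itself be $0$. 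More precisely: the function $w:=\widehat R{}_{u^-}^{\,U}$-type correction is not quite right; rather, one argues that $-u\le p_0$ and that the invariant minorant of $p_0$ is $0$, so any invariant function $\le -u$ is $\le 0$; combined with the boundary condition one shows $-u$ has no positive invariant minorant, i.e.\ $-u$ is majorized by its potential part.

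The cleanest route, and the one I would actually carry out, is a sweeping argument in $\overline U$. Fix the countable cover $(V_k)$ from Lemma \ref{lemma4.2}. For each $k$, consider $\widehat R{}_{u^-}^{\,\overline U\setminus V_k}$; using Proposition \ref{prop4.8} and Corollary \ref{cor4.11} this decomposes into an invariant part coming from $\Delta(U)$ and a part invariant off a compact subset of $U$. The boundary hypothesis, via Lemma \ref{lemma5.1} and the characterization of minimal thinness, shows that the $\Delta_1(U)$-contribution to $\widehat R{}_{u^-}^{\,\Delta(U)}$ vanishes: for each $Y\in\Delta_1(U)$, the relation $\mf\liminf_{x\to Y}u(x)\ge0$ means $U$ itself is "not too large as seen from $Y$" for the negative part, forcing $\widehat R{}_{u^-}^{\{Y\}}$-type quantities to be potentials, hence by Proposition \ref{prop4.16}(i)–(ii) and Theorem \ref{thm5} the measure representing $\widehat R{}_{u^-}^{\Delta(U)}$ is carried by a $\mu_1$-null set, so $\widehat R{}_{u^-}^{\Delta(U)}$ is a fine potential; being also invariant (Proposition \ref{prop4.7}), it is $0$. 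Then $\widehat R{}_{u^-}^{\,\overline U}=\widehat R{}_{(u^-)'}^{\,U}$ for the finely superharmonic $(u^-)'$, i.e.\ a fine potential on $U$, so $u^-$ is majorized by a fine potential whose invariant minorant is $0$. Applying the classical fine boundary minimum principle for finely superharmonic functions bounded below by a fine potential (from \cite{F1}) to $u$ on $U$ itself, now that its negative part is shown to have potential-type sweeping, yields $u\ge0$.

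I expect the main obstacle to be the passage from the mf-liminf hypothesis at individual points $Y\in\Delta_1(U)$ to the global conclusion that the $\Delta_1(U)$-part of the swept measure of $u^-$ vanishes: this requires carefully relating $\mf\liminf_{x\to Y}u(x)\ge0$ to minimal thinness at $Y$ of the sets $\{u<-\varepsilon\}$, then using the integral representation of Theorem \ref{thm5} together with the uniqueness of the representing measure carried by $\Delta_1(U)$ to conclude that $\widehat R{}_{u^-}^{\Delta(U)}$ has a $\mu_1$-null representing measure and is therefore simultaneously a fine potential and invariant, hence zero. The $\varepsilon$-bookkeeping (replacing $u$ by $u+\varepsilon s$ or $u^-$ by $(u-\varepsilon)^-$, and the monotone passage $\varepsilon\downarrow 0$) is routine but must be done so that at each stage the relevant function stays in $\cal S(U)$ or is a fine potential.
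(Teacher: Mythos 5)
Your proposal does not reach a valid proof, and it diverges from the paper's argument in ways that introduce genuine gaps.

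First, the ``further reduction'' from $s\in\cal S(U)$ to $s\in\cal P(U)$ is not justified, and it cannot be done trivially. The hypothesis only gives $u\ge -s$ with $s\in\cal S(U)$; for example $s$ could be the constant $1$, which is invariant with no potential part. Killing the invariant part of $s$ is essentially the whole difficulty of the proposition (it is exactly what the mf-boundary condition is needed for), so you cannot ``further reduce'' to $s\in\cal P(U)$ at the outset without begging the question.

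Second, the core of your sweeping argument applies $\widehat R{}_{\,\cdot}^{\,A}$ to $u^-$. But $u^-=\max(-u,0)$ is finely \emph{sub}harmonic and is not in $\cal S(U)$, so none of Propositions \ref{prop4.7}, \ref{prop4.8}, \ref{prop4.16} or Theorem \ref{thm5} applies to $\widehat R{}_{u^-}^{\,\cdot}$; those results are stated for $u\in\cal S(U)$. The paper avoids this by sweeping the majorant $s\in\cal S(U)$, not $u^-$.

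Third, and most seriously, the final step ``apply the classical fine boundary minimum principle to $u$ on $U$ itself'' does not locate the correct boundary. The relative fine boundary minimum principle in \cite{F1} (Theorem 10.8 there) concerns the relative fine boundary $U\cap\partial_fV$ of a finely open $V\subset U$, i.e.\ a boundary inside $\Omega$; applied to $U$ itself it gives information at $\partial_fU$ in $\Omega$, which is an entirely different object from the Martin boundary $\Delta_1(U)$ where your hypothesis lives. This is where the paper's proof does something you have not proposed: for $\eps>0$ it chooses, at each $Y\in\Delta(U)$, an mf-neighborhood $W_Y$ with $u>-\eps$ on $W_Y\cap U$, forms $W=\bigcup_Y W_Y$, sets $E=\overline U\setminus W\subset U$, and shows via the integral representation $\widehat R{}_s^E=\int\widehat R{}_{K(\cdot,Y)}^E\,d\sigma(Y)$ that $\widehat R{}_s^E\in\cal P(U)$ because $E$ is minimal-thin at every $Y\in\Delta_1(U)$ (so each $\widehat R{}_{K(\cdot,Y)}^E$ is a fine potential with representing measure on $U$). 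The relative minimum principle is then applied not to $u$ on $U$ but to $u+\eps$ on the fine interior $V$ of $E$, whose relative fine boundary lies inside $U\cap\widetilde W$ where $u+\eps\ge0$ is already known; the lower bound $u+\eps\ge-\widehat R{}_s^E$ holds on $V$ since $\widehat R{}_s^E=s$ on $b(E)\supset V$. Letting $\eps\to0$ finishes the proof. Without this localization to $V\subset U$, and without the observation that $\widehat R{}_s^E$ (not $s$) is a fine potential, your final step has no content.
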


\begin{proof} For given $\eps>0$ and $Y\in\Delta(U)$ there exists by the
assumed boundary inequality an mf-fine open mf-neighborhood
$W_Y\subset\overline U$ of $Y$ such that $u>-\eps$ on $W_Y\cap U$.
(Take for example $W_Y=\{Y\}$ for $Y\in\Delta(U)\setminus\Delta_1(U)$,
cf.\ Definition \ref{def5.4} (c).) In terms of the mf-open set
$W:=\bigcup\{W_Y:Y\in\Delta(U)\}$ containing $\Delta(U)$
we infer that $u>-\eps$ on $W\cap U=\bigcup\{W_Y\cap U:Y\in\Delta(U)\}$.
The set $E:=\overline U\setminus W$ is mf-closed and contained
in $U$, and so $E$ is finely closed, as noted before Proposition
\ref{prop5.5}. Furthermore, $E$ is minimal-thin at $Y$ in view of
Definition \ref{def5.4} (b), and hence ${\widehat R}{}_{K(.,Y)}^E$
is a fine potential on $U$ for each $Y\in\Delta(U)$. It follows that
${\widehat R}{}_s^E$ likewise is a fine potential. To see this, write
$s=K\sigma$ and ${\widehat R}{}_{K(.,Y)}^E=K\lambda_Y$ with unique representing
measures $\sigma$ on $U\cup\Delta(U)$ and $\lambda_Y$ on $U$, respectively,
cf.\ \cite[Corollary 3.25]{EF1}. By Lemma \cite[Lemma 3.21]{EF1} we have
\begin{eqnarray*}{\widehat R}{}_s^E\!\!\!
&=&\!\!\!\int_{U\cup\Delta_1(U)}{\widehat R}{}_{K(.,Y)}^Ed\sigma(Y)
=\int_{U\cup\Delta_1(U)}K\lambda_Yd\sigma(Y)\\
&=&\!\!\! \int_{U\cup\Delta_1(U)}\Bigl(\int_UK(.,z)d\lambda_Y(z)\Bigr)d\sigma(Y)
=\int_UK(.,z)d\nu(z),
\end{eqnarray*}
where the measure $\nu=\int_{U\cup\Delta_1(U)}\lambda_Yd\sigma(Y)$ on $U$ is the
integral with respect to $\sigma$ of the family
$(\lambda_Y)_{Y\in U\cup\Delta_1(U)}$ of measures on $U$,
cf.\ \cite[3, proposition 1]{Bo}.
In particular, $\nu$ is carried by $U$ along with each $\lambda_Y$. Hence
${\widehat R}{}_s^E$ is indeed a fine potential according to
\cite[Corollary 3.26]{EF1}. The (possibly empty) fine interior $V$ of $E$
has relative fine boundary
$U\cap\partial_fV\subset U\setminus V\subset U\cap\widetilde W$.
We have $u+\eps\ge0$ on $U\cap W$ and hence by fine continuity on
$U\cap\widetilde W\supset U\cap\partial_fV$.
Furthermore, $u+{\widehat R}{}_s^E=u+s$ on $U\cap b(E)\supset V$,
where $b(E)$ denotes the base of $E$ in $\Omega$; and so
$u+\eps\ge-{\widehat R}{}_s^E$ on $V$. Altogether, it follows by the
relative fine boundary minimum property \cite[Theorem 10.8]{F1}
applied to the fine potential $p={\widehat R}{}_s^E$ that $u+\eps\ge0$
on $V$. As noted above, the same inequality holds on
$U\cap\widetilde W\supset U\setminus V$ and thus on all of
$U$. By varying $\eps$ we conclude that indeed $u\ge0$ on all of $U$.
\end{proof}

In view of Proposition \ref{prop5.5} we have the following weaker Martin
boundary minimum property relative to the natural topology on $\overline U$.
Both properties are used in \cite{EF3}.

\begin{cor}\label{cor5.7}  Let $u$ be finely superharmonic on $U$,
and suppose that
$$\underset{x\to Y,\,x\in U}{\lim\inf}\,u(x)\ge0
\quad\textrm{for any }\;Y\in\Delta(U).$$
If moreover $u$ is lower bounded then $u\ge0$ on $U$.
\end{cor}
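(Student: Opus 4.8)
The plan is to deduce Corollary \ref{cor5.7} directly from Proposition \ref{prop5.6}, exploiting the fact that, by Proposition \ref{prop5.5}, the mf-topology on $\overline U$ is finer than the natural topology. First I would observe that if the ordinary $\liminf$ of $u$ as $x\to Y$ (natural topology, $x\in U$) is $\ge0$ for every $Y\in\Delta(U)$, then a fortiori the mf-$\liminf$ is $\ge0$ for every such $Y$: indeed the mf-filter of $Y$ in $\overline U$ is finer than the natural-neighborhood filter, so the $\mfliminf$ is taken over a finer filter and can only be larger. In particular the hypothesis of Proposition \ref{prop5.6}, namely $\mfliminf_{x\to Y,\,x\in U}u(x)\ge0$ for every $Y\in\Delta_1(U)$, is satisfied.

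Second, I would handle the one-sided bound. The hypothesis of Proposition \ref{prop5.6} is that $u\ge-s$ on $U$ for some $s\in\cal S(U)$; here we are only assuming that $u$ is lower bounded, say $u\ge-c$ on $U$ for a constant $c\ge0$. The point is that the constant function $c$ is finely harmonic on $U$, hence invariant, hence in $\cal S(U)$ (it is positive finely superharmonic), so we may take $s:=c\in\cal S(U)$ and the required domination $u\ge-s$ holds trivially. (If one prefers, for the degenerate case $c=0$ the statement is immediate, and for $c>0$ the constant $c$ is plainly of class $\cal S(U)$.)

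With both hypotheses of Proposition \ref{prop5.6} verified, that proposition yields $u\ge0$ on $U$, which is exactly the conclusion of the corollary. I do not expect any genuine obstacle: the only thing to be careful about is the direction of the inequality when passing to a finer filter (finer filter $\Rightarrow$ larger $\liminf$), and the trivial but essential remark that constants belong to $\cal S(U)$ so that an a priori lower bound by a constant is a special case of domination by a function of class $\cal S(U)$. Thus the proof is a two-line reduction: refine the boundary hypothesis via Proposition \ref{prop5.5}, replace the lower bound $-c$ by $-s$ with $s=c\in\cal S(U)$, and invoke Proposition \ref{prop5.6}.
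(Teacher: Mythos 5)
Your proposal is correct and is exactly the route the paper intends: the paper gives no separate proof of Corollary \ref{cor5.7}, merely remarking that it follows ``in view of Proposition \ref{prop5.5}'' from Proposition \ref{prop5.6}. You supply precisely the two omitted observations — that a finer filter can only increase the $\liminf$ (so the natural-topology hypothesis at each $Y\in\Delta_1(U)\subset\Delta(U)$ implies the mf-$\liminf$ hypothesis), and that a constant lower bound $-c$ is a special case of domination $u\ge -s$ with $s=c\in\cal S(U)$, constants being finely harmonic on $U$.
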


We proceed to define sweeping on subsets of $\overline U$ relative to the
minimal-fine topology, and to show that
sweeping on $A$ relative to the mf-topology coincides with sweeping on $A$
relative to the natural topology as defined in Definition \ref{def4.5}.

\begin{definition}\label{def4.5a} Let $A\subset\overline U$. For any
function $u\in {\cal S}(U)$ the reduction of $u$ on $A$ relative to the
mf-topology is defined by
$${}^1R{}_u^A
=\inf\{v\in\cal S(U): v\ge u\textrm{ on }A\cap U\textrm{ and on }
W\cap U\textrm{ for some }W\in{}{}^1\cal W(A)\},$$
where ${}^1\cal W(A)$ denotes the family of all mf-open sets
$W\subset\overline U$ such that
$W\supset A\cap \Delta(U)$. The sweeping of $u$ on $A$ is defined as the
greatest finely l.s.c.\ minorant of ${}^1R{}_u^A$ and is denoted by
${}^1{\widehat R}{}_u^A$.
\end{definition}

The function ${}^1{\widehat R}{}_u^A$ is of class $\cal S(U)$.
Similarly to reduction and sweeping relative to the natural topology we have
$${}^1R{}_u^A=\inf\{R{}_u^{(A\cup W)\cap U}:W\in{}^1\cal W(A)\},$$
$${}^1{\widehat R}{}_u^A
={\widehat\inf}\{{\widehat R}{}_u^{(A\cup W)\cap U}:W\in{}^1\cal W(A)\}.$$
Furthermore, there is a decreasing sequence $(W_j)$ of sets
$W_j\in{}^1\cal W(A)$
(depending on $u$) such that it suffices to take for $W$ the sets $W_j$,
in the above definitions and alternative expressions (this is shown in the
same way as in the case of sweeping relative to the natural topology
by application of the fundamental convergence theorem  and the
quasi-Lindel{\"o}f property for finely u.s.c.\ functions).
For any subset $A$ of $U$, the present reduction ${}^1R{}_u^A$ and
sweeping ${}^1{\widehat R}{}_u^A$ on $A$ relative to $\overline U$ clearly
reduce to the usual reduction and sweeping on $A$ relative to $U$.
Since the mf-topology is finer than the natural topology
(Proposition \ref{prop5.5}), we clearly have
${}^1R{}_u^A\le R{}_u^A$ and ${}^1{\widehat R}{}_u^A\le {\widehat R}{}_u^A$.

As in the Euclidean case \cite[Theorem 8.3.1]{AG} we have the following

\begin{cor}\label{cor6}
For any $u\in\cal S(U)$ we have
${}^1{\widehat R}{}_u^{\Delta(U)\setminus\Delta_1(U)}=0$.
\end{cor}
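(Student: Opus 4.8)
The goal is to show that for any $u\in\cal S(U)$ the mf-sweeping of $u$ on the set $\Delta(U)\setminus\Delta_1(U)$ vanishes. Write $A:=\Delta(U)\setminus\Delta_1(U)$; this is a subset of $\Delta(U)$, so by Definition \ref{def4.5a} we have ${}^1\widehat R{}_u^A=\widehat\inf\{\widehat R{}_u^{W\cap U}:W\in{}^1\cal W(A)\}$, where it suffices to take $W$ running through a suitable decreasing sequence $(W_j)$ of mf-open sets containing $A$. The plan is to exhibit \emph{one} mf-open set $W\supset A$ for which $\widehat R{}_u^{W\cap U}=0$, or more realistically to show that the infimum over admissible $W$ is $0$, by exploiting that each point of $A$ has $\{Y\}$ itself as an mf-neighborhood (Definition \ref{def5.4}(c)), so that $A$ is mf-open in $\overline U$; hence $A$ itself belongs to ${}^1\cal W(A)$, and we may take $W=A$, giving $W\cap U=\varnothing$ and therefore $\widehat R{}_u^{W\cap U}=\widehat R{}_u^{\varnothing}=0$.

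More carefully: since the mf-topology induces the discrete topology on $\Delta(U)\setminus\Delta_1(U)$ (as recorded in the discussion after Definition \ref{def5.4}), and since $\Delta(U)$ is mf-closed in $\overline U$ (equivalently $U$ is mf-open), the set $A=\Delta(U)\setminus\Delta_1(U)$ is mf-open in $\overline U$: indeed for $Y\in A$ the singleton $\{Y\}$ is an mf-neighborhood of $Y$ by Definition \ref{def5.4}(c), and $\{Y\}\subset A$, so $A$ is a union of mf-open sets. Consequently $A\in{}^1\cal W(A)$, and $A\cap U=\varnothing$. Taking $W=A$ in the alternative expression for ${}^1\widehat R{}_u^A$ we get $\widehat R{}_u^{(A\cup W)\cap U}=\widehat R{}_u^{\varnothing}=0$, and since ${}^1\widehat R{}_u^A$ is the $\widehat\inf$ of the corresponding (nonnegative) family it follows that ${}^1\widehat R{}_u^A=0$.

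There is essentially no obstacle here beyond correctly invoking the two structural facts about the mf-topology that are already established in the text — that $U$ is mf-open in $\overline U$ and that $\Delta(U)\setminus\Delta_1(U)$ carries the discrete mf-topology — both of which are stated immediately after Definition \ref{def5.4}. The only mild subtlety is to make sure one is allowed to use $A$ itself as the set $W$ in Definition \ref{def4.5a}: this is legitimate precisely because $A\subset\Delta(U)$ is mf-open, so $A\in{}^1\cal W(A)$ (recall ${}^1\cal W(A)$ consists of the mf-open sets $W\subset\overline U$ with $W\supset A\cap\Delta(U)=A$). Once that is granted, the vanishing is immediate from $A\cap U=\varnothing$ and the fact that reduction of $u$ on the empty subset of $U$ is $0$.
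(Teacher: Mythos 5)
Your proof is correct and follows exactly the paper's argument: Definition \ref{def5.4}(c) shows $A=\Delta(U)\setminus\Delta_1(U)$ is mf-open, hence $A\in{}^1\cal W(A)$, and since $A\cap U=\varnothing$ the reduction vanishes. The paper states this more tersely but the reasoning is identical.
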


\begin{proof} According to Definition \ref{def5.4} (c) the set
$A:=\Delta(U)\setminus\Delta_1(U)$ belongs to ${}^1\cal W(A)$, and
$A\cap U=\varnothing$, whence ${}^1{\widehat R}{}_u^A=0$.
\end{proof}

We shall need the following analogue of Proposition \ref{prop4.18}:

\begin{lemma}\label{lemma5.8}
For any $A\subset\Delta(U)$ and $Y\in U\cup\Delta_1(U)$ we have
${}^1{\widehat R}{}_{K(.,Y)}^A=K(.,Y)$ if \,$Y\in A$,
and ${}^1{\widehat R}{}_{K(.,Y)}^A=0$ if  $Y\notin A$.
\end{lemma}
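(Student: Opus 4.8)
The plan is to prove the two cases separately, exploiting that the mf-topology coincides with the fine topology on $U$ and that sweeping on subsets of $U$ relative to $\overline U$ reduces to the usual sweeping relative to $U$ (as noted right after Definition \ref{def4.5a}).

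First consider the case $Y\notin A$. Then $A\subset\Delta(U)\subset\overline U\setminus\{Y\}$. In the definition of ${}^1\cal W(A)$ I would exploit Definition \ref{def5.4}: since $Y$ is either a point of $U$ (in which case I want mf-open sets avoiding $Y$, and $\overline U\setminus\{Y\}$ is such because $U$ is mf-open and a point of $U$ has small fine neighborhoods contained in $U$) or a point of $\Delta_1(U)$ (in which case $\{Y\}$ itself is minimal-thin at $Y$, so $\overline U\setminus\{Y\}$ is an mf-neighborhood of every other point and in particular contains $A$). Either way there is a set $W\in{}^1\cal W(A)$ with $Y\notin W$; more usefully, I would show that the mf-closure of $A$ inside $\overline U$ can be separated from $Y$, and then pick a decreasing sequence $W_j\in{}^1\cal W(A)$ with $\bigcap_j\widetilde W_j\not\ni Y$ and with ${}^1\widehat R{}_{K(.,Y)}^A=\widehat\inf_j\widehat R{}_{K(.,Y)}^{W_j\cap U}$. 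Each $\widehat R{}_{K(.,Y)}^{W_j\cap U}$ is then a fine potential on $U$ by Lemma \ref{lemma5.1}, because $Y\notin\widetilde W_j$ means $W_j\cap U$ is minimal-thin at $Y$. Hence $p:=\widehat\inf_j\widehat R{}_{K(.,Y)}^{W_j\cap U}$ is a fine potential. But $p$ is also invariant: arguing exactly as in the proof of Theorem \ref{thm5}, each $\widehat R{}_{K(.,Y)}^{W_j\cap U}$ is invariant on $U\setminus\widetilde W_j$ by \cite[Lemma 2.4]{EF1}, hence $p$ is invariant on $\bigcup_j(U\setminus\widetilde W_j)$; since $A\subset\Delta(U)$, this union is all of $U$, so $p$ is invariant on $U$ by \cite[Theorem 2.6]{EF1}. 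A function which is both a fine potential and invariant is $0$, so ${}^1\widehat R{}_{K(.,Y)}^A=p=0$.

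Next consider the case $Y\in A$, so necessarily $Y\in\Delta_1(U)$. Here I would first establish the upper bound ${}^1\widehat R{}_{K(.,Y)}^A\le K(.,Y)$, which is immediate: $K(.,Y)$ itself is an admissible competitor in Definition \ref{def4.5a} (it majorizes itself on $A\cap U$ and on $W\cap U$ for, say, $W=\overline U$, which is trivially mf-open). For the reverse inequality I would use that $Y$ is the pole of $K(.,Y)$ (Theorem \ref{thm4.17}): that is, $\widehat R{}_{K(.,Y)}^{\{Y\}}=K(.,Y)$ for the sweeping relative to the natural topology. The point is to compare ${}^1\widehat R{}_{K(.,Y)}^A$ with $\widehat R{}_{K(.,Y)}^{\{Y\}}$. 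Since $\{Y\}\subset A$, Definition \ref{def4.5a} gives ${}^1\widehat R{}_{K(.,Y)}^A\ge{}^1\widehat R{}_{K(.,Y)}^{\{Y\}}$ by the obvious monotonicity in $A$ (the family ${}^1\cal W(A)$ shrinks as $A$ grows, and the constraint $v\ge u$ on $A\cap U$ becomes stronger). So it suffices to show ${}^1\widehat R{}_{K(.,Y)}^{\{Y\}}=K(.,Y)$. Now for any mf-open $W\ni Y$, the set $U\setminus W$ is minimal-thin at $Y$ by Definition \ref{def5.4} (b), so $\widehat R{}_{K(.,Y)}^{U\setminus W}\ne K(.,Y)$, and by Proposition \ref{prop4.16} (ii)–(iii) (using $K(.,Y)$ extreme and Lemma \ref{lemma5.1}) we get $\widehat R{}_{K(.,Y)}^{U\setminus W}\in\cal P(U)$; equivalently $\widehat R{}_{K(.,Y)}^{W\cap U}=K(.,Y)$ on the set where $\widehat R{}_{K(.,Y)}^{U\setminus W}$ and $K(.,Y)$ differ... more cleanly: from Proposition \ref{prop4.6} and the fact that $U\setminus W$ is mf-thin, one deduces $\widehat R{}_{K(.,Y)}^{W\cap U}$ is invariant, $\preccurlyeq K(.,Y)$, hence $=cK(.,Y)$ with $c\in\{0,1\}$ by Proposition \ref{prop4.6}.6; and $c\ne0$ since otherwise $\widehat R{}_{K(.,Y)}^{W\cap U}=0$ would force, together with the complementary thinness, a contradiction with $K(.,Y)$ being non-zero and invariant (as in the proof of Corollary \ref{cor5.3}). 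Therefore $\widehat R{}_{K(.,Y)}^{W\cap U}=K(.,Y)$ for every $W\in{}^1\cal W(\{Y\})$, whence ${}^1\widehat R{}_{K(.,Y)}^{\{Y\}}=\widehat\inf\{K(.,Y)\}=K(.,Y)$, completing the case $Y\in A$.

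The main obstacle I anticipate is the separation step in the first case — verifying cleanly that when $Y\notin A$ one can choose the approximating mf-open sets $W_j\in{}^1\cal W(A)$ so that $Y\notin\widetilde W_j$ (the mf-closure), which is what makes Lemma \ref{lemma5.1} applicable. For $Y\in\Delta_1(U)$ this uses that $\{Y\}$ is minimal-thin at $Y$ together with the description of mf-neighborhoods; for $Y\in U$ it uses Lemma \ref{lemma4.2} to produce a fine (hence mf-) neighborhood of $Y$ disjoint from a prescribed mf-neighborhood of $A\subset\Delta(U)$. Once that is in place, the invariance-plus-fine-potential-implies-zero argument runs exactly as in the parallel passage of the proof of Theorem \ref{thm5}.
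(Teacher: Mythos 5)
Your two cases need separate scrutiny, and in both you miss the short argument the paper uses, while your substitute argument has a gap.

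For $Y\notin A$, you are working far too hard. Right after Definition \ref{def4.5a} the paper notes that, since the mf-topology is finer than the natural topology, ${}^1{\widehat R}{}_u^A\le{\widehat R}{}_u^A$ for every $A\subset\overline U$. Combined with Proposition \ref{prop4.16}~(ii), which says ${\widehat R}{}_{K(.,Y)}^A=0$ when $A\subset\Delta(U)$ and $Y\notin A$, this is a one-liner. Your alternative route --- choosing $W_j\in{}^1\cal W(A)$ with $Y\notin\widetilde W_j$ so that Lemma \ref{lemma5.1} applies to each $W_j\cap U$ --- requires a separation property of the mf-topology (roughly, regularity at $Y$ against an arbitrary mf-closed set disjoint from $Y$) which is nowhere established in the paper; the mf-topology is only shown to be Hausdorff (Proposition \ref{prop5.5}). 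You acknowledge this as an ``anticipated obstacle,'' but it is in fact a genuine hole, and it is unnecessary because the monotonicity between the two sweepings already does the job.

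For $Y\in A$ (hence $Y\in\Delta_1(U)$), your reduction to $\{Y\}$ and the trivial upper bound are fine, but your step ``one deduces $\widehat R{}_{K(.,Y)}^{W\cap U}$ is invariant, $\preccurlyeq K(.,Y)$, hence $=cK(.,Y)$'' is not justified: nothing in Proposition \ref{prop4.6}, nor the mere fact that $U\setminus W$ is minimal-thin at $Y$, gives a priori that the swept function $\widehat R{}_{K(.,Y)}^{W\cap U}$ (a sweeping onto a subset of $U$, not of $\Delta(U)$, so Proposition \ref{prop4.7} does not apply) is invariant. That invariance is exactly the conclusion, not a lemma you get for free. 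The paper's argument avoids this: from the subadditivity $K(.,Y)=\widehat R{}_{K(.,Y)}^U\le\widehat R{}_{K(.,Y)}^{U\cap W}+\widehat R{}_{K(.,Y)}^{U\setminus W}$ and the Riesz decomposition property, one writes $K(.,Y)=u+v$ with $u\le\widehat R{}_{K(.,Y)}^{U\cap W}$ and $v\le\widehat R{}_{K(.,Y)}^{U\setminus W}$. Then $v$ is a fine potential (minorant of one) and simultaneously $v\preccurlyeq K(.,Y)$ with $K(.,Y)$ invariant, hence $v$ is invariant; being both, $v=0$, and $K(.,Y)=u\le\widehat R{}_{K(.,Y)}^{U\cap W}\le K(.,Y)$. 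This is the step that makes the lower bound work without ever asserting a priori invariance of $\widehat R{}_{K(.,Y)}^{U\cap W}$. You should replace your ``invariant, $\preccurlyeq$, hence $cK$'' chain with this Riesz decomposition argument.
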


\begin{proof}  If $Y\notin A$ then
 ${}^1{\widehat R}{}_{K(.,Y)}^A\le{\widehat R}{}_{K(.,Y)}^A=0$ by Proposition
 \ref{prop4.16} (ii).
If $Y\in A$ and hence $Y\notin U$, then $Y\in\Delta_1(U)$, and
${}^1{\widehat R}{}_{K(.,Y)}^A=K(.,Y)$
because we even have ${}^1{\widehat R}{}_{K(.,Y)}^{\{Y\}}=K(.,Y)$. In fact,
for any $W\in{}^1\cal W(\{Y\})$,
 $$K(.,Y)=\widehat R{}_{K(.,Y)}^U
\le\widehat R{}_{K(.,Y)}^{U\cap W}+\widehat R{}_{K(.,Y)}^{U\setminus W},$$
where the latter term on the right is a fine potential on $U$ by Definition
\ref{def5.2}, $U\setminus W$ being minimal-thin at $Y$ in view of Definition
\ref{def5.4} (b). By the Riesz decomposition property
we obtain $K(.,Y)=u+v$ with $u\le\widehat R{}_{K(.,Y)}^{U\cap W}$ and
$v\le\widehat R{}_{K(.,Y)}^{U\setminus W}$.
This shows that $v\preccurlyeq K(.,Y)$
and hence $v=0$, $v$ being a fine potential along with
$\widehat R{}_{K(.,Y)}^{U\setminus W}$, and $K(.,Y)$ being invariant since
$Y\in\Delta_1(U)$. Thus $K(.,Y)=u\le\widehat R{}_{K(.,Y)}^{U\cap W}$, obviously
with equality. By varying $W$ we infer by Definition \ref{def4.5a} that
indeed ${}^1\widehat R{}_{K(.,Y)}^{\{Y\}}=K(.,Y)$.
\end{proof}

The six assertions of Proposition \ref{prop4.6} carry over along with their
proofs when reductions and sweepings are taken with respect to the
minimal-fine topology on $U$ instead of the smaller natural topology, and of
course $\cal W(A)$ is replaced by ${}^1\cal W(A)$ for any
$A\subset\overline U$. The same
applies to Propositions \ref{prop4.7}, \ref{prop4.7a}, and \ref{prop4.8}.

\begin{theorem}\label{thm5.16} Let $A\subset\overline U$ and $u\in\cal S(U)$.
Then  ${}^1{\widehat R}{}_u^A={\widehat R}{}_u^A$.
\end{theorem}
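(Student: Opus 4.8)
The plan is to prove the two inequalities $\,{}^1{\widehat R}{}_u^A\le{\widehat R}{}_u^A\,$ and $\,{}^1{\widehat R}{}_u^A\ge{\widehat R}{}_u^A\,$ separately. The first is immediate and was already noted in the text: since the mf-topology is finer than the natural topology (Proposition \ref{prop5.5}), every $W\in\cal W(A)$ lies in ${}^1\cal W(A)$, so the infimum defining ${}^1R{}_u^A$ is taken over a larger family and ${}^1{\widehat R}{}_u^A\le{\widehat R}{}_u^A$. The real work is the reverse inequality, and here the strategy is to reduce to the case $u=K(.,Y)$ with $Y\in U\cup\Delta_1(U)$ and then invoke the integral representation of Theorem \ref{thm5} together with its mf-analogue.

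First I would reduce to $A\subset\Delta(U)$. By the mf-analogues of Propositions \ref{prop4.8} and \ref{prop4.7} (whose validity is asserted in the paragraph preceding the theorem), both ${\widehat R}{}_u^A$ and ${}^1{\widehat R}{}_u^A$ split as a $\Delta(U)$-part plus a $U$-part, and on subsets of $U$ the two sweepings coincide with the ordinary sweeping relative to $U$; so it suffices to treat $A\subset\Delta(U)$. Next I would reduce to $u=K\mu$: every $u\in\cal S(U)$ is $K\mu$ for a Radon measure $\mu$ on $\overline U$ carried by $U\cup\Delta_1(U)$, and by Proposition \ref{prop4.16}(i) (and its mf-analogue, which holds since ${}^1{\widehat R}{}_u^A\le{\widehat R}{}_u^A$) the contribution of the part of $\mu$ on $U$ vanishes on both sides; so we may assume $\mu$ is carried by $\Delta_1(U)$. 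Now apply Theorem \ref{thm5}: ${\widehat R}{}_{K\mu}^A=\int^*{\widehat R}{}_{K(.,Y)}^A\,d\mu(Y)$, and by Proposition \ref{prop4.18} the integrand equals $1_A(Y)K(.,Y)$, i.e.\ ${\widehat R}{}_{K\mu}^A=K(1_{A^*}\mu)$ where $A^*\supset A$ is a $\mu$-measurable $G_\delta$ hull. On the mf-side, Lemma \ref{lemma5.8} gives exactly the same formula ${}^1{\widehat R}{}_{K(.,Y)}^A=1_A(Y)K(.,Y)$ for each $Y\in\Delta_1(U)$. If the mf-analogue of Theorem \ref{thm5} is available — and its proof should go through verbatim, since it rests only on Propositions \ref{prop4.7}, \ref{prop4.8}, \ref{prop4.17}, and \ref{prop4.16}(ii), all of which carry over to the mf-setting as stated — then ${}^1{\widehat R}{}_{K\mu}^A=\int^*{}^1{\widehat R}{}_{K(.,Y)}^A\,d\mu(Y)=K(1_{A^*}\mu)={\widehat R}{}_{K\mu}^A$, and we are done.

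The main obstacle is precisely justifying that the long proof of Theorem \ref{thm5} transfers to the mf-topology. The delicate ingredient there is the cofinal decreasing sequence $(W_j)$ of natural-open sets and the computation of $\widehat\inf_j{\widehat R}{}_{K(.,Y)}^{W_j\cap U}$; in the mf-setting one must use mf-open $W_j$ and verify that the mf-closures $\widetilde W_j$ still shrink to $\overline A$ (or to $A$), using the sets $D_k=\complement\overline V_k$ from Lemma \ref{lemma4.2}(c), whose mf-closures intersect in $\Delta(U)$, together with Lemma \ref{lemma5.1}. Rather than re-run that whole argument, the cleaner route — and the one I would actually take — is to bypass Theorem \ref{thm5}'s mf-version: prove ${}^1{\widehat R}{}_{K(.,Y)}^A={\widehat R}{}_{K(.,Y)}^A$ directly for minimal $K(.,Y)$ (immediate from Lemma \ref{lemma5.8} compared with Proposition \ref{prop4.18}), then extend to $K\mu$ by Proposition \ref{prop4.17} and its mf-analogue (both sweepings are countably additive in $\mu$ in the specific order), and finally extend from $A\subset\Delta(U)$ to general $A\subset\overline U$ by the mf-analogue of Proposition \ref{prop4.8}. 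This packages the same content without repeating the technical heart of Theorem \ref{thm5}.
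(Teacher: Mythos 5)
Your route~1 (via the mf-analogue of Theorem~\ref{thm5}) would work, but as you yourself note it requires re-running the long proof of Theorem~\ref{thm5} in the mf-setting, and that proof does not in fact go through ``verbatim'': it uses natural compactness of $\overline U$ and natural closures $\overline C_k$ in essential ways, so the verification is genuinely nontrivial. Your preferred route~2, however, has a real gap: Proposition~\ref{prop4.17} (and its mf-analogue) only gives countable additivity of $\mu\mapsto\widehat R{}_{K\mu}^A$ in the specific order, i.e.\ it handles $\mu=\sum_j\mu_j$ with $\sum_j\int d\mu_j<\infty$. To pass from the minimal kernels $K(.,Y)$ to a general $K\mu$ you would need to realize a general Radon measure $\mu$ carried by $\Delta_1(U)$ as a countable sum of point masses, which is impossible for non-atomic $\mu$. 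The passage from $K(.,Y)$ to $K\mu$ is precisely the content of the integral representation, so countable additivity cannot replace it. You have not closed the argument.

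The paper's proof is shorter than either of your routes and avoids the mf-analogue of Theorem~\ref{thm5} entirely. For $A\subset\Delta(U)$ and $W\in{}^1\cal W(A)$, it observes that $W\cap U\subset U$, so $\widehat R{}_{K\mu}^{W\cap U}$ is a sweeping on a subset of $U$ and the already-established integral representation \cite[Lemma~3.21]{EF1} applies directly:
$$
\widehat R{}_{K\mu}^{W\cap U}
=\int\widehat R{}_{K(.,Y)}^{W\cap U}\,d\mu(Y)
\ge\int^*{}^1\widehat R{}_{K(.,Y)}^A\,d\mu(Y)
=\int^*\widehat R{}_{K(.,Y)}^A\,d\mu(Y)
=\widehat R{}_{K\mu}^A,
$$
where the first inequality holds because $W\in{}^1\cal W(A)$, the middle equality is exactly your pointwise identity ${}^1\widehat R{}_{K(.,Y)}^A=\widehat R{}_{K(.,Y)}^A=1_A(Y)K(.,Y)$ from Lemma~\ref{lemma5.8} and Proposition~\ref{prop4.18}, and the last equality is Theorem~\ref{thm5} in its natural-topology form. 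Taking the infimum over $W\in{}^1\cal W(A)$ gives ${}^1\widehat R{}_{K\mu}^A\ge\widehat R{}_{K\mu}^A$, and the rest is as you describe (the opposite inequality from finer topology, the general $A$ from the mf-version of Proposition~\ref{prop4.8}). You correctly identified the key pointwise lemma; the missing idea is to apply the integral representation to $\widehat R{}_{K\mu}^{W\cap U}$ (a subset of $U$) for each fixed $W$, which lets you quote Theorem~\ref{thm5} as already proved rather than having to re-establish its mf-analogue.
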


\begin{proof} This is obvious if $A\subset U$. Next, for $A\subset\Delta(U)$,
write $u=K\mu$ with $\mu$ carried by $U\cup\Delta_1(U)$.
For any $W\in{}^1\cal W(A)$ we have by \cite[Lemma 3.21]{EF1}
\begin{eqnarray*}\widehat R{}_{K\mu}^{W\cap U}\!\!\!
&=&\!\!\!\int\widehat R{}_{K(.,Y)}^{W\cap U}d\mu(Y)
\ge\int^*{}^1\widehat R{}_{K(.,Y)}^Ad\mu(Y)\\
&=&\!\!\!\int^*\widehat R{}_{K(.,Y)}^Ad\mu(Y)=\widehat R{}_{K\mu}^A,
\end{eqnarray*}
the second equality because
${}^1\widehat R{}_{K(.,Y)}^A=\widehat R{}_{K(.,Y)}^A=1_A(Y)K(.,Y)$ for
$Y\in U\cup\Delta_1(U)$ according to Lemma \ref{lemma5.8} and Proposition
\ref{prop4.18}, respectively; and the third equality follows by
Theorem \ref{thm5}. By varying $W\in{}^1\cal W(A)$ this yields
${}^1{\widehat R}{}_{K\mu}^A\ge{\widehat R}{}_{K\mu}^A$,
actually with equality.
It follows by Proposition \ref{prop4.8} and its mf version that indeed
${}^1{\widehat R}{}_u^A={\widehat R}{}_u^A$ for any $A\subset\overline U$
because $v$ is the same in either
case (by what has just been shown), and hence
${}^1\widehat R{}_v^{A\cap U}=\widehat R{}_v^{A\cap U}$
since $A\cap U\subset U$.
\end{proof}

\thebibliography{99}


\bibitem{Al} Alfsen, E.M.: \textit{Compact Convex Sets and Boundary
Integrals}, Ergebnisse der Math., Vol. 57, Springer, Berlin, 2001.

\bibitem{AG} Armitage, D.H., Gardiner, S.J.: \textit{Classical
Potential Theory}, Springer, London, 2001.


\bibitem{Bo} Bourbaki, N.: Livre VI: Int\'egration, chap. 5: Int\'egration
des mesures, Paris, 1956.

\bibitem{Do} Doob, J.L.: \textit{Classical Potential Theory and Its
Probabilistic Counterpart}, Grundlehren Vol. 262, Springer, New York, 1984.


\bibitem{El1} El Kadiri, M.: \textit{Sur la d\'ecomposition de
Riesz et la repr\'esentation int\'egrale des fonctions finement
surharmoniques}, Positivity {\bf 4} (2000), no. 2, 105--114.


\bibitem{EF1} El Kadiri, M., Fuglede, B.: \textit{Martin boundary of a
fine domain and a Fatou-Naim-Doob theorem for finely super\-harmonic
functions},  arXiv:1501.00209.

\bibitem{EF3} El Kadiri, M., Fuglede, B.: \textit{The Dirichlet problem
at the Martin boundary of a fine domain}, Manuscript (2014).

\bibitem{F1} Fuglede, B.: \textit{Finely Harmonic Functions}, Lecture Notes
in Math. 289, Springer, Berlin, 1972.

\bibitem{F2} Fuglede, B.: \textit{Sur la fonction de Green pour un
domaine fin}, Ann. Inst. Fourier \textbf{25}, 3--4 (1975), 201--206.



\bibitem{F4} Fuglede, B.: \textit{Integral representation of fine
 potentials}, Math. Ann. \textbf{262} (1983), 191--214.








\end{document}